\theoremstyle{plain}
\newtheorem{theorem}{\bf Theorem}[section]
\newtheorem{proposition}[theorem]{\bf Proposition}
\newtheorem{lemma}[theorem]{\bf Lemma}
\newtheorem{corollary}[theorem]{\bf Corollary}
\newtheorem{conjecture}[theorem]{\bf Conjecture}
\theoremstyle{definition}
\newtheorem{example}[theorem]{\bf Example}
\newtheorem{definition}[theorem]{\bf Definition}
\newcommand{\N}{\mathbb N}
\newcommand{\C}{\text{\rm C}}
\newcommand{\DP}{\negthinspace :\negthinspace}
\newcommand{\und}{\;\mbox{ and }\;}
\newcommand{\red}{{\text{\rm red}}}
\newcommand{\mon}{\text{\rm mon}}
\newcommand{\adj}{\text{\rm adj}}
\newcommand{\eq}{\text{\rm eq}}
\newcommand{\G}{\text{\rm G}}
\newcommand{\BF}{\text{\rm BF}}
\newcommand{\FF}{\text{\rm FF}}
\newcommand{\f}{\mathfrak}
 \DeclareMathOperator{\ord}{ord}
\DeclareMathOperator{\spec}{spec} \DeclareMathOperator{\supp}{supp}
\DeclareMathOperator{\Ker}{Ker} 
\DeclareMathOperator{\End}{End}
\numberwithin{equation}{section}
\begin{document}

\title{On conductor submonoids of factorial monoids}

\author{Alfred Geroldinger and  Weihao Yan and Qinghai Zhong}

\address{University of Graz, NAWI Graz\\
Department of Mathematics and Scientific Computing\\
Heinrichstra{\ss}e 36\\
8010 Graz, Austria (A.~Geroldinger and Q.~Zhong)}
\email{alfred.geroldinger@uni-graz.at, qinghai.zhong@uni-graz.at}
\urladdr{https://imsc.uni-graz.at/geroldinger,  https://imsc.uni-graz.at/zhong/}

\address{School of Mathematical Sciences, Hebei Normal University, Shijiazhuang, Hebei Province, 050024 China}
\email{weihao.yan.hebnu@outlook.com}

\keywords{factorial monoids, Krull monoids,  local domains, divisorial ideals, length sets, catenary degree}

\subjclass[2020]{13F05, 13F15, 13H10, 20M12, 20M13}

\thanks{The first and third authors were supported  by the Austrian Science Fund FWF, Project P36852-N}

\begin{abstract}
We study algebraic and arithmetic properties of submonoids (resp. subrings) of factorial monoids (resp. factorial domains) whose non-invertible elements all lie in the conductor. This continues earlier work of Baeth, Cisto, et al.. On our way we answer several conjectures, formulated in their papers in the affirmative (\cite[Conjecture 4.16]{Ba23a} and  \cite[Conjectures 2.3 and 2.10, and Section 9]{Ci-GS-LL25a}).
\end{abstract}

\maketitle

\smallskip
\section{Introduction} \label{1}
\smallskip

Two classes of submonoids of free abelian monoids received some attention in recent literature.   To begin with, a  submonoid $H$ of a finitely generated free abelian monoid $F$ is called cofinite if $F \setminus H$ is finite. Numerical monoids, generalized numerical monoids, and cofinite affine monoids are classic examples. Part of the motivation stems from combinatorial commutative algebra where these monoids are studied together with their associated monoid algebras (see \cite{Fa-Pe-Ut16a,Ka15a, Ci-Fa-Na25a,Ro-TR-VT25,GG-TR-VT25} for recent contributions). Secondly, a  submonoid  $H$ of a finitely generated free abelian monoid $F$ is called an ideal extension of $F$ if $H = \mathfrak a \cup \{\boldsymbol 0\}$ for some $s$-ideal $\mathfrak a \subset F$ (if this holds, then the $s$-ideal $\mathfrak a$ equals the conductor $(H \DP F)$).

\smallskip
N.~Baeth  initiated the investigation of submonoids of free abelian monoids that are both ideal extensions and cofinite. Indeed, in \cite{Ba23a} he studied complement-finite ideals. 
The work of Baeth was recently continued by C.~Cisto et al. \cite{Ci-GS-LL25a}   who introduced the concept of ideal extensions of $(\N_0^{(I)}, +)$. The  observation that an ideal extension monoid $H \subset \N_0^{(I)}$  is $\le$-convex (this means that for each two elements $\boldsymbol a, \boldsymbol b \in H$, the whole interval $\{ \boldsymbol c \in \N_0^{(I)} \colon \boldsymbol a \le \boldsymbol c \le \boldsymbol b \} $ lies in $H$) was their motivation to introduce the concept of gap absorbing monoids (see Definition \ref{3.3}). 

\smallskip
In the present paper,  we introduce submonoids of (not necessarily finitely generated) factorial monoids whose non-invertible elements all lie in the conductor to the factorial monoid, and call them conductor submonoids (see Definition \ref{3.3}). This approach generalizes the concept of ideal extension monoids and in Theorem \ref{3.8} we show that a submonoid of a factorial monoid is a conductor submonoid if and only if it is a gap absorbing monoid in the sense of \cite[Section 2]{Ci-GS-LL25a} (this settles Conjecture 2.10 in \cite{Ci-GS-LL25a}). In addition to the classes of monoids comprised by the former concepts, our more general approach includes well-studied classes of  local domains and various monoids of ideals (Examples \ref{3.9}, \ref{3.11},  and \ref{4.4}, Proposition \ref{4.3}, and Corollary \ref{5.2}). 

\smallskip
In Section \ref{3}, we study algebraic properties of conductor submonoids of  factorial monoids (Theorem \ref{3.5}) and we characterize them by the  gap absorbing property (Theorem \ref{3.8}). In Section \ref{4}, we consider conductor submonoids lying between the monoid of principal ideals and the monoid of divisorial ideals of a Krull monoid (Proposition \ref{4.3} and Theorem \ref{4.7}). Section \ref{5} is devoted to the arithmetic of conductor submonoids. First we show that conductor submonoids of factorial monoids have catenary degree at most three (Theorem \ref{5.1}), which gives a positive answer to \cite[Conjecture 4.16]{Ba23a} and to Question 3 in \cite[Section 9]{Ci-GS-LL25a}. Finally, Theorem \ref{5.5} provides detailed information on the arithmetic of conductor submonoids associated with Krull monoids.

\smallskip
\section{Preliminaries} \label{2}
\smallskip

By a {\it monoid}, we mean a commutative and cancellative semigroup with identity. Let $H$ be a monoid. We denote by $H^{\times}$ the group of invertible elements, by $\mathsf q (H)$ the quotient group of $H$, by $\mathcal A (H)$ the set of atoms of $H$, and by $H_{\red} = H/H^{\times} = \{ a H^{\times} \colon a \in H\}$ the associated reduced monoid of $H$. For subsets $A, B \subset H$, let
\[
A \, B = \{ a b \colon a \in A, b \in B \} \subset H
\]
be the {\it product set} of $A$ and $B$. Furthermore, let
\begin{itemize}
\item $\widehat H = \{ x \in \mathsf q (H) \colon \text{there is some $c \in H$ such that $cx^n \in H$ for all $n \in \N$} \} \subset \mathsf q (H)$ denote the {\it complete integral closure} of $H$, and let
    
\item $H ' = \{ x \in \mathsf q (H) \colon \text{there is some $N \in \N$ such that $x^n \in H$ for all $n \ge N$} \} \subset \mathsf q (H)$ denote the {\it seminormalization} of $H$.    
\end{itemize}
Then $H \subset H' \subset \widehat H \subset \mathsf q (H)$ and $H$ is called {\it seminormal} (resp. {\it completely integrally closed}) if $H=H'$ (resp. $H = \widehat H$).
For a set $P$, we denote by $\mathcal F (P)$ the free abelian monoid with basis $P$. Let $F$ be a factorial monoid. Then $F \cong F^{\times} \times \mathcal F (P)$, where $P$ is a set of representatives of prime elements of $F$. If $p \in P$, then $\mathsf v_p \colon F \to \N_0$ denotes the $p$-adic valuation. For an element $a = \varepsilon p_1^{k_1} \cdot \ldots \cdot p_s^{k_s} \in F$, where $p_1, \ldots, p_s$ are non-associated primes and $k_1, \ldots, k_s \in \N$, then
\[
|a|_F = k_1+ \ldots + k_s = \sum_{p \in P} \mathsf v_p (a) \in \N_0 
\]
denotes the {\it length} of $a$.

\smallskip
\noindent
{\bf Ideal Theory of Monoids.}  For subsets $X, Y \subset \mathsf q (H)$, we set $(X \colon Y) = \{ x \in \mathsf q (H) \colon x Y \subset X\}$,  $X^{-1} = (H \colon X)$, and $X_v = (X^{-1})^{-1}$.
A subset $\mathfrak a \subset H$ is called 
\begin{itemize}
\item an {\it $s$-ideal} of $H$ if $\mathfrak a H = \mathfrak a$,

\item a {\it $v$-ideal} (or a {\it divisorial ideal}) of $H$ if $\mathfrak a_v = \mathfrak a$.
\end{itemize}
Let $s$-$\spec (H)$ (resp. $v$-$\spec (H)$) denote the set of prime $s$-ideals (resp. prime $v$-ideals) of $H$, and let $\mathfrak X (H) \subset $ $s$-$\spec (H)$ be the set of all minimal nonempty prime $s$-ideals of $H$. The monoid $H$ is called a
\begin{itemize}
\item a {\it Mori monoid} if it satisfies the ACC (ascending chain condition) on divisorial ideals,

\item a {\it \G-monoid} if 
      \[
       \bigcap_{\substack {\mathfrak p \in s\text{\rm -spec}(H)\\ \mathfrak p \ne \emptyset}} \mathfrak p \ne \emptyset \,,
       \]

\item a {\it Krull monoid} if it is a completely integrally closed Mori monoid, and

\item {\it $v$-local} if $|v$-$\max (H)|= 1$.
\end{itemize}
A Mori monoid is $v$-local if and only if $H \setminus H^{\times}$ is a $v$-ideal (\cite[Theorem 2.2.5]{Ge-HK06a}). If $H_{\red}$ is finitely generated, then $H$ is a Mori G-monoid (\cite[Corollary 4.10]{G-HK-H-K03}).

\smallskip
\noindent
{\bf Arithmetic of Monoids.} The free abelian monoid $\mathsf Z (H) = \mathcal F ( \mathcal A (H_{\red})$ denotes the factorization monoid of $H$ and $\pi \colon \mathsf Z (H) \to H_{\red}$, defined by $\pi (u) = u$ for all $u \in \mathcal A (H_{\red})$,  is the factorization homomorphism. For an element $a \in H$, we call
\begin{itemize}
\item $\mathsf Z_H (a) = \mathsf Z (a) = \pi^{-1} (aH^{\times}) \subset \mathsf Z (H)$ the {\it set of factorizations} of $a$, and
    
\item $\mathsf L_H (a) = \mathsf L (a) = \{ |z| \colon z \in \mathsf Z (a) \} \subset \N_0$ the {\it length set} of $a$.
\end{itemize}    
The monoid $H$ is called
\begin{itemize}
\item {\it half-factorial} if $|\mathsf L (a)|=1$ for all $a \in H$, 
\item {\it length-factorial} if  $1 \le |\mathsf L (a)|=|\mathsf Z (a)|$ for all $a\in H$, 
\item a {\it \BF-monoid} if $\mathsf L (a)$ is finite and nonempty for all $a \in H$, and 
\item an {\it \FF-monoid} if $\mathsf Z (a)$ is finite and nonempty for all $a \in H$.
\end{itemize}
Every Krull monoid is an FF-monoid and every Mori monoid is a BF-monoid.
Let $z,\,z'\in\mathsf Z (H)$. Then we can write
\[
z = u_1\cdot\ldots\cdot u_{\ell} v_1\cdot\ldots\cdot v_m\quad
\text{and}\quad z' = u_1\cdot\ldots\cdot u_{\ell} w_1\cdot\ldots
\cdot w_n\,,
\]
where $\ell,\,m,\,n\in\N_0$ and $u_1,\ldots,u_\ell,\,v_1,
\ldots,v_m,\,w_1,\ldots,w_n\in\mathcal A(H_\red)$ are such that
\[
\{v_1,\ldots,v_m\}\cap\{w_1,\ldots,w_n\} =\emptyset\,.
\]
Then $\gcd(z,z')=u_1\cdot\ldots\cdot u_{\ell}$, and we call
\[
\mathsf d (z,z') =\max\{m,\,n\} =\max\{ |z\gcd (z,z')^{-1}|,
|z'\gcd (z,z')^{-1}|\}\in\N_0
\]
the {\it distance} between $z$ and $z'$. If $\pi (z) =\pi (z')$ and
$z\ne z'$, then
\begin{equation} \label{dist-1}
2 +
      \bigl| |z |-|z'| \bigr| \le \mathsf d (z, z') \,,
\end{equation}
whence $|\mathsf Z (a)|\ge 2$ implies that  (details con be found in \cite[Lemma 1.6.2]{Ge-HK06a})
\begin{equation} \label{dist-2}
2 + \sup \Delta (\mathsf L(a)) \le \mathsf c (a) \,.
\end{equation}      
For an atom $u \in H$, let $\mathsf t (H,u)$ denote the smallest $N \in \N_0 \cup \{\infty\}$ with the following property:
\begin{itemize}
\item[] If $a \in uH$ and $z \in \mathsf Z (a)$, then there is a factorization $z' \in \mathsf Z (a)$, where  $uH^{\times}$ pops up, such that $\mathsf d (z,z') \le N$.
\end{itemize}
We say that $H$ is {\it locally tame} if $\mathsf t (H,u) < \infty$ for all $u \in \mathcal A (H)$. If $u$ is not prime, then $\omega (u) \le \mathsf t (H, u)$, where $\omega (u)$ denotes the omega-invariants studied in \cite{Ba23a, Ci-GS-LL25a}.

\medskip
\noindent
{\bf Chains of factorizations}. Let $a\in H$ and
$N\in\mathbb N_0$. A finite sequence $z_0,\ldots,
z_k\in\mathsf Z (a)$ is called a {\it $($monotone$)$ $N$-chain of
factorizations of $a$} if $\mathsf d (z_{i-1},z_i)\le N$ for all $i\in
[1,k]$ (and $|z_0|\le\ldots\le |z_k|$ or $|z_0|\ge\ldots\ge
|z_k|$). We denote by $\mathsf c (a)$ (or by $\mathsf c_{\mon} (a)$ resp.) the smallest $N\in\N _0\cup\{\infty\}$ such
that any two factorizations $z,\,z'\in\mathsf Z (a)$ can be
concatenated by an $N$-chain (or by a monotone $N$-chain resp.).
Then
\[
\mathsf c(H) =\sup\{\mathsf c(b)\colon b\in H\}\in\N_0\cup
\{\infty\}\quad\text{resp.}\quad\mathsf c_{\mon} (H) =\sup\{
\mathsf c_{\mon} (b)\colon b\in H\}\in\N_0\cup\{\infty\}\quad
\,
\]
denote the {\it catenary degree} resp. the {\it monotone
catenary degree} of $H$. By the Inequality \eqref{dist-2}, we see that $\mathsf c (H) \le 3$ implies that all length sets are intervals. Local tame degrees and catenary degrees are the main  invariants beyond length sets, which describe the arithmetic of atomic non-factorial monoids and domains.

\medskip
\noindent
{\bf Integral domains}. 
Let $D$ be an integral domain. We denote by  $D^{\bullet}$  its monoid of nonzero elements. Then $D$ is a Mori (or Krull or \G-) domain if and only if $D^{\bullet}$ is a Mori (or Krull or \G-) monoid (for details see \cite[Chapter 2]{Ge-HK06a}). The same correspondence holds true for weakly Krull domains and for C-domains, which will be introduced in later sections. All algebraic and arithmetic notation and properties, introduced for monoids, will be used for domains in the natural way. In particular, $\widehat D$ denotes the complete integral closure of the domain $D$ and $\mathsf c (D)$  its catenary degree, and so on.

\smallskip
\section{Algebraic properties of conductor submonoids} \label{3}
\smallskip

If $F$ is a monoid and $H \subset F$ is a submonoid, then $(H \DP F)$ is called the {\it conductor} of $H$ in $F$. We start with a simple well-known observation.

\begin{lemma} \label{3.1}
Let $F$ be a monoid and let $H \subset F$ be a submonoid with $(H \DP F) \ne \emptyset$. Then $F \subset \mathsf q (H)$ and $\widehat H = \widehat F$.
\end{lemma}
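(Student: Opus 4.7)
The plan is to handle the two assertions separately, with the second following easily once the ambient quotient group is identified.

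First I would fix an element $c \in (H \DP F)$, so that $c \in H$ and $cF \subset H$. For any $x \in F$ we then have $cx \in H$ and $c \in H$, so $x = (cx) c^{-1} \in \mathsf q (H)$. This gives $F \subset \mathsf q (H)$, and combined with the trivial inclusion $H \subset F$ it forces $\mathsf q (H) = \mathsf q (F)$. In particular, the complete integral closures $\widehat H$ and $\widehat F$ are subsets of the same group, so it makes sense to compare them.

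The inclusion $\widehat H \subset \widehat F$ is immediate from the definition: if $x \in \widehat H$, there is some $c_1 \in H$ with $c_1 x^n \in H$ for all $n \in \N$, and then $c_1 \in F$ and $c_1 x^n \in F$ for all $n$, so $x \in \widehat F$.

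For the reverse inclusion $\widehat F \subset \widehat H$, the key trick is to absorb a conductor element. Given $x \in \widehat F$, choose $d \in F$ with $d x^n \in F$ for all $n$, and take $c \in (H \DP F)$ as above. Then $cd \in cF \subset H$, and for every $n \in \N$ we have
\[
(cd) x^n = c (d x^n) \in c F \subset H \,,
\]
so $cd$ witnesses that $x \in \widehat H$. There is no serious obstacle here; the only thing to be mildly careful about is recording that $\mathsf q (H) = \mathsf q (F)$ before comparing the two complete integral closures, since the definition of $\widehat H$ takes place inside $\mathsf q (H)$.
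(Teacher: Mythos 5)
Your proof is correct and follows essentially the same route as the paper: multiplying by a conductor element to show $F \subset \mathsf q (H)$, and absorbing a conductor element into the witness $d$ to get $\widehat F \subset \widehat H$. The only cosmetic difference is that you fix a single $x \in F$ rather than a general quotient $b/a \in \mathsf q (F)$, which suffices since $\mathsf q (H)$ is a group.
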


\begin{proof}
Obviously, we have $H \subset F \subset \mathsf q (F)$. If $x \in \mathsf q (F)$ and $f \in (H \DP F)$, then there are $a, b \in F$ with
\[
x = \frac{b}{a} = \frac{fb}{fa} \in \mathsf q (H) \,.
\]
Furthermore, we have
$\widehat H \subset \widehat F \subset \mathsf q (F)=\mathsf q (H)$. Conversely, let $x \in \widehat F$ and $c \in F$ such that $cx^n \in F$ for all $n \in \N$. Then for some $f \in (H \DP F)$ we obtain that $fcx^n \in H$ for all $n \in \N$, whence $x \in \widehat H$.
\end{proof}

\smallskip
\begin{lemma} \label{3.2}
Let $F$ be a factorial monoid and let $H \subset F$ be a submonoid with $H\neq H^{\times}$. Then the following are equivalent.
\begin{enumerate}
\item[(a)] $H = (H \DP F) \cup H^{\times}$.

\item[(b)] $H \setminus H^{\times}$ is an $s$-ideal of $F$.
\end{enumerate}
If the above conditions hold, then $H \subset F$ is a submonoid with $\widehat H = F$, $(H \DP F)$ is a $v$-ideal of $H$,  and $H^{\times} = F^{\times} \cap  H$.
\end{lemma}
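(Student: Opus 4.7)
The plan is to treat the two implications in order, then extract the three concluding statements with the help of a common key observation.

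The direction (b)$\Rightarrow$(a) is essentially tautological: from $(H\setminus H^{\times})F\subset H\setminus H^{\times}\subset H$ I read off $H\setminus H^{\times}\subset (H\DP F)$, and taking union with $H^{\times}$ gives $H=H^{\times}\cup(H\setminus H^{\times})\subset H^{\times}\cup(H\DP F)\subset H$. For (a)$\Rightarrow$(b), I would pick $x\in H\setminus H^{\times}$ and $f\in F$; by (a) we have $x\in (H\DP F)$, so $xf\in H$, and only $xf\notin H^{\times}$ needs work. Assuming for contradiction that $xf\in H^{\times}$, there exists $y\in H$ with $xfy=1$, and since $F$ is factorial every factor of this product lies in $F^{\times}$; in particular $x\in F^{\times}$. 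But then $xF=F$ together with $x\in(H\DP F)$ forces $F=xF\subset H$, hence $H=F$ and $x\in F^{\times}=H^{\times}$, contradicting the choice of $x$.

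The same collapse argument powers most of the consequences. Because $H\neq H^{\times}$, (a) produces $H\setminus H^{\times}\subset (H\DP F)$, so $(H\DP F)\neq\emptyset$; Lemma~\ref{3.1} then delivers $\widehat H=\widehat F$, and any factorial monoid is completely integrally closed (writing $F\cong F^{\times}\times\mathcal F(P)$, the condition $cx^{n}\in F$ for every $n\in\N$ forces $\mathsf v_{p}(x)\ge 0$ on each $p\in P$), so $\widehat F=F$ and hence $\widehat H=F$. For the $v$-ideal claim, I would invoke the general fact that any dual $X^{-1}=(H\DP X)$ is a divisorial ideal of $H$, applied to $X=F$ regarded as a subset of $\mathsf q(H)=\mathsf q(F)$. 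Finally, $H^{\times}\subset F^{\times}\cap H$ is obvious; conversely, if $x\in (F^{\times}\cap H)\setminus H^{\times}$, then (a) places $x$ in $(H\DP F)\cap F^{\times}$, and the earlier collapse argument forces $H=F$ and $x\in H^{\times}$, a contradiction.

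I expect the main obstacle to be the step ruling out $xf\in H^{\times}$ in (a)$\Rightarrow$(b); the crucial realization is that any element of $(H\DP F)\cap F^{\times}$ forces the degenerate situation $F=H$, and this same observation also pins down the equality $H^{\times}=F^{\times}\cap H$.
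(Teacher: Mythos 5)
Your proposal is correct and follows essentially the same route as the paper: the central observation in both is that an element of $F^{\times}\cap(H\DP F)$ forces $F\subset H$, which simultaneously rules out units in the conductor for the implication (a)$\Rightarrow$(b) and yields $H^{\times}=F^{\times}\cap H$, while $\widehat H=F$ comes from Lemma \ref{3.1} plus the complete integral closedness of factorial monoids, and the $v$-ideal claim from the general divisoriality of duals $(H\DP X)$. The only cosmetic difference is that the paper isolates the degenerate case $H=F$ at the outset, whereas you fold it into the contradiction.
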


\begin{proof}
If $H = F$, then all claims hold. Suppose that $H \subsetneq F$. If $\varepsilon \in F^{\times} \cap (H \DP F)$, then $F = \varepsilon (\varepsilon^{-1}F) = \varepsilon F \subset H$, a contradiction. Thus, it follows that $H^{\times} \cap (H \DP F) \subset F^{\times} \cap (H \DP F) = \emptyset$.

(a) $\Longrightarrow$ (b) We have
\[
(H \setminus H^{\times})F \subset (H \DP F)F \subset (H \DP F)  \subset H \,.
\]
Since $H^{\times} \cap (H \DP F) = \emptyset$, we infer that $(H \setminus H^{\times})F \subset H \setminus H^{\times}$, whence $H \setminus H^{\times}$ is an $s$-ideal of $F$.

(b) $\Longrightarrow$ (a) Since $(H \setminus H^{\times})F \subset H  \setminus H^{\times} \subset H$, it follows that $H \setminus H^{\times} \subset (H \DP F)$, whence $H = (H \DP F) \cup H^{\times}$.

Suppose that (a) and (b) hold. Since $1 \in H$ and
\[
H H = \big( (H \DP F) \cup H^{\times} \big) \big( (H \DP F) \cup H^{\times} \big) \subset (H \DP F) H \cup H^{\times}H^{\times} \subset H \,,
\]
it follows that $H$ is a submonoid of $F$. Lemma \ref{3.1} implies that $\widehat H = \widehat F$ and since $F$ is factorial we have $\widehat F = F$. Since $F^{\times} \cap (H \DP F) = \emptyset$, it follows from (a) that $F^{\times}\cap H\subset H^{\times}$ and hence $F^{\times}\cap H=H^{\times}$.
Finally, $(H \DP F)$ is a $v$-ideal of $H$ by \cite[Proposition 2.3.4]{Ge-HK06a}.
\end{proof}

\begin{definition} \label{3.3}
Let $F$ be a factorial monoid. A submonoid $H \subset F$ with $H\neq H^{\times}$  is called a {\it conductor submonoid} of $F$ if $H$ satisfies the equivalent conditions of Lemma \ref{3.2}.
\end{definition}

Let $F$ be a factorial domain and let $D \subset F$ be a subring. Then $D^{\bullet} = D \setminus \{0\}$ is a monoid,  $F^{\bullet} = F \setminus \{0\}$ is a factorial monoid, $D^{\times} = (D^{\bullet})^{\times}$, $F^{\times} = (F^{\bullet})^{\times}$, and
\[
(D \DP F) = \{f \in F \colon fF \subset D\} = \{f \in F^{\bullet} \colon f F^{\bullet} \subset D^{\bullet}\} \cup \{0\} = (D^{\bullet} \colon F^{\bullet}) \cup \{0\} \,.
\]
Thus,
\[
D = (D \DP F) \cup D^{\times} \quad \text{if and only if} \quad D^{\bullet} = (D^{\bullet} \DP F^{\bullet}) \cup D^{\times} \,,
\]
and if this holds, then $D$ is called a {\it conductor subring} of $F$. If $F$ is not a field and $D \subsetneq F$, then $D \setminus D^{\times} = (D \DP F)$ is a divisorial ideal, whence $D$ is a local domain.

\smallskip
\begin{lemma} \label{3.4}
Let $F$ be a factorial monoid and let $H \subset F$ be a conductor submonoid. 
\begin{enumerate}
\item  If $H \subsetneq F$, then the inclusion $H \hookrightarrow F$ is not a divisor homomorphism and $H$ is not a Krull monoid.

\item  $H$ is a \BF-monoid and it is an \FF-monoid if and only if $(F^{\times} \DP H^{\times}) < \infty$.
    
\item The reduced monoid $H_{\red} = H/H^{\times} = \{ a H^{\times} \colon a \in H\}$ is a conductor submonoid of $F/H^{\times} = \{aH^{\times} \colon a \in F\}$.

\item Let $s\in F$ and $a\in H\setminus H^{\times}$. If $sa\in \mathcal A(H)$, then $a\in \mathcal A(H)$.  
\end{enumerate}
\end{lemma}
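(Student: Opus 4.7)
The four claims all follow directly from the structural information packaged in Lemma~\ref{3.2}: namely $\widehat H = F$, $H^\times = F^\times \cap H$, and the absorbing property $(H\setminus H^\times) F \subset H\setminus H^\times$. For~(1), I would argue by contradiction: if $H$ were a Krull monoid it would be completely integrally closed, forcing $H = \widehat H = F$, against $H \subsetneq F$. To see that the inclusion $H \hookrightarrow F$ is not a divisor homomorphism, pick any $c \in (H \DP F) = H \setminus H^\times$ and any $x \in F \setminus H$; then $cx \in H$ witnesses $c \mid cx$ in $F$, while $c \mid cx$ in $H$ would force $x = cx/c \in H$, a contradiction. The BF part of~(2) is immediate: every atom $u \in \mathcal A(H)$ lies in $H\setminus H^\times \subset F\setminus F^\times$, so $|u|_F \ge 1$, and thus an $H$-factorization $a = u_1\cdots u_k$ yields $k \le |a|_F$; atomicity of $H$ follows by the same token by induction on $|a|_F$.

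The FF characterization in~(2) is where I expect the real work, and is the main obstacle. The pivotal technical claim to establish is that for $u \in \mathcal A(H)$ and $\varepsilon \in F^\times$ one has $\varepsilon u \in \mathcal A(H)$, with $\varepsilon u$ and $\varepsilon' u$ associate in $H$ exactly when $\varepsilon H^\times = \varepsilon' H^\times$. Both statements rest on the conductor property: $\varepsilon u \in uF \subset H$ since $u \in (H \DP F)$; and a hypothetical non-trivial factorization $\varepsilon u = xy$ in $H$ with $y \notin H^\times$ rewrites as $u = (\varepsilon^{-1}x)\, y$ with $\varepsilon^{-1}x \in xF \subset H$, whence atomicity of $u$ forces $\varepsilon^{-1}x \in H^\times$ and thus $x \in F^\times \cap H = H^\times$. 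Granting this, if $(F^\times \DP H^\times) = \infty$ I pick any $u \in \mathcal A(H)$ (nonempty since $H \ne H^\times$ and $H$ is BF) and exhibit the infinite family of factorizations $(\varepsilon u)(\varepsilon^{-1}u) \in \mathsf Z(u^2)$, one per coset $\varepsilon H^\times$. Conversely, if the index is finite, I bound $|\mathsf Z(a)|$ for a fixed $a = \eta \prod p_i^{m_i} \in H$ by counting ordered factorizations: $k \le |a|_F$, the distribution of the primes $p_i$ among the $u_i$ is a finite combinatorial choice, and the remaining unit tuples are parameterized by cosets $(\varepsilon_1 H^\times, \dots, \varepsilon_{k-1} H^\times)$ with the last unit determined modulo $H^\times$ by $\prod \varepsilon_i = \eta$, giving at most $(F^\times \DP H^\times)^{k-1}$ options.

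For~(3), the factorization $F = F^\times \times \mathcal F(P)$ combined with $H^\times \subset F^\times$ yields $F/H^\times \cong (F^\times/H^\times) \times \mathcal F(P)$, which is again factorial. The canonical surjection $F \twoheadrightarrow F/H^\times$ restricts to $H \to H_{\red}$ and carries the $s$-ideal $H \setminus H^\times$ of $F$ onto $H_{\red} \setminus (H_{\red})^\times$; the latter is therefore an $s$-ideal of $F/H^\times$, and condition~(b) of Lemma~\ref{3.2} applies. Finally, for~(4) I argue contrapositively. If $a = xy$ in $H$ with $x, y \notin H^\times$, then $x \in (H \DP F)$ gives $sx \in xF \subset H$, and $sx \notin H^\times$ (else $x \in F^\times \cap H = H^\times$), so $sa = (sx)\, y$ is a factorization of $sa$ into two non-units in $H$, contradicting $sa \in \mathcal A(H)$.
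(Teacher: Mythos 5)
Your proof is correct, and in substance it follows the same route as the paper; the main difference is that you prove from scratch what the paper handles by citation. For part (1), the paper observes that the inclusion is a divisor homomorphism iff $H=\mathsf q(H)\cap F$ and computes $\mathsf q(H)\cap F=\mathsf q(F)\cap F=F\ne H$, whereas you exhibit an explicit failure of the divisor property ($c\in(H\DP F)$, $x\in F\setminus H$, $c\mid cx$ in $F$ but not in $H$); both are fine, and the Krull argument via $H=\widehat H=F$ is identical. For part (2), the paper simply invokes \cite[Corollary 1.3.3 and Theorem 1.5.6]{Ge-HK06a}; your direct arguments (length bounds via $|\cdot|_F$ for BF, and for FF the key observation that $\varepsilon u\in\mathcal A(H)$ for every $\varepsilon\in F^{\times}$ and $u\in\mathcal A(H)$, yielding the infinite family $(\varepsilon u)(\varepsilon^{-1}u)\in\mathsf Z(u^2)$ when the index is infinite, and a finite coset-counting bound on $|\mathsf Z(a)|$ when it is finite) are a correct and self-contained substitute, essentially reproving the cited results in this special case. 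Part (3) is declared ``obvious'' in the paper and your sketch is what is meant. Part (4) is verbatim the paper's argument in contrapositive form. So nothing is missing; your write-up just buys independence from the two external citations at the cost of some extra counting.
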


\begin{proof}
1.
The inclusion is a divisor homomorphism if and only if $H = \mathsf q (H) \cap F$. But, we have $\mathsf q (H) \cap F = \mathsf q (F) \cap F = F$. If $H$ is Krull, then $H = \widehat H = F$ by Lemma \ref{3.2}.

2. Since $F$ is a BF-monoid and $H^{\times} = F^{\times} \cap H$ by Lemma \ref{3.2}, $H$ is a \BF-monoid by \cite[Corollary 1.3.3]{Ge-HK06a}. The characterization of the FF-monoid property follows from \cite[Theorem  1.5.6]{Ge-HK06a}.

3. Obvious.

4. By Definition \ref{3.3}, we have $sa\in H\setminus H^{\times}$.
Suppose $sa\in \mathcal A(H)$. Assume to the contrary that $a\not\in \mathcal A(H)$. Then $a=b_1b_2$ with $b_1,b_2\in H\setminus H^{\times}$, whence $sb_1\in H\setminus H^{\times}$ by Definition \ref{3.3}. It follows from $sa=(sb_1)b_2$ that $sa\not\in \mathcal A(H)$, a contradiction.
\end{proof}

Note that every Mori monoid is a BF-monoid and every BF-monoid satisfies the ACC on principal ideals. A conductor monoid $H$ is a submonoid of a factorial monoid $F$ such that $H^{\times} = F^{\times} \cap H$. C-monoids too are submonoids of factorial monoids with this property. We recall their definition.

\smallskip
Let $F$ be a factorial monoid and let $H \subset F$ be a submonoid. Two elements $y, y' \in F$ are called $H$-equivalent  if $y^{-1}H \cap F = {y'}^{-1} H \cap F$ or, in other words,
\[
\text{if for all} \ x \in F, \ \text{we have} \ xy \in H \quad \text{if and only if} \quad xy' \in H \,.
\]
$H$-equivalence defines a congruence relation on $F$ and for $y \in F$, let $[y]_H^F = [y]$ denote the $(H,F)$-congruence class of $y$. Then
\[
\mathcal C (H,F) = \{[y] \colon y \in F \} \quad \text{and} \quad \mathcal C ^* (H,F) = \{ [y] \colon y \in (F \setminus F^{\times}) \cup \{1\} \}
\]
are commutative semigroups with identity element $[1]$. $\mathcal C (H,F)$ is the {\it class semigroup} of $H$ in  $F$ and the subsemigroup $\mathcal C^* (H,F) \subset \mathcal C (H,F)$ is the {\it reduced class semigroup} of $H$ in $F$.
A monoid $H$ is called a C-{\it monoid} (defined in a factorial monoid $F$) if it is a submonoid of $F$ such that $H^{\times} = F^{\times} \cap H$ and the reduced class semigroup is finite. If $H$ is a C-monoid, then $H$ is Mori, $(H \DP \widehat H) \ne \emptyset$, and $\widehat H$ is Krull with finite class group (see \cite[Chapter 2]{Ge-HK06a}). A domain $D$ is a \C-domain if and only if $D^{\bullet}$ is a \C-monoid (\cite{Re13a, Ge-Ra-Re15c, Ba-Po25a}).

\smallskip
\begin{theorem} \label{3.5}
Let $F$ be a factorial monoid and let $H \subset F$ be a conductor submonoid. Then $H$ is a Mori \G-monoid if and only if
$F_{\red}$ is finitely generated. If this holds, then $H$ is a $v$-local \C-monoid defined in $F$ and $s$-$\spec (H)$ is finite.
\end{theorem}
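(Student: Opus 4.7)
The plan is to prove the equivalence by establishing the C-monoid property of $H$. In the direction where $F_{\red}$ is finitely generated, I verify that $H$ is a C-monoid defined in $F$ and read off the Mori, $v$-local, finite-$s$-spectrum, and G-monoid properties from standard consequences; the converse is a short contrapositive argument.

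Assume $P = \{p_1, \ldots, p_n\}$ is finite. Lemma \ref{3.2} gives $H^{\times} = F^{\times} \cap H$, so the C-monoid property reduces to the finiteness of $\mathcal{C}^*(H, F)$. The key observation is that for $y \in F \setminus F^{\times}$ and any $x \in F$, the product $xy$ is a non-unit, so $xy \in H \iff xy \in (H \DP F)$; and since $(H \DP F)$ is an $s$-ideal of $F$ (Lemma \ref{3.2}), it is closed under $F^{\times}$-multiplication, so this condition depends only on $v(x) + v(y) \in \N_0^n$, where $v$ denotes the valuation vector. Writing $\mathcal{I}^* \subset \N_0^n$ for the image of $(H \DP F)$, the class $[y]_H^F$ is therefore determined by $S_{v(y)} := \{u \in \N_0^n : u + v(y) \in \mathcal{I}^*\}$. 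Dickson's lemma supplies finitely many minimal elements $m_1, \ldots, m_k$ of $\mathcal{I}^*$, and since $S_w$ is the union of the shifted orthants $\bigl(\max(m_i - w, 0) + \N_0^n\bigr)$ with each truncation $\max(m_i - w, 0)$ ranging over the finite box $\prod_j [0, m_{i,j}]$, only finitely many $S_w$ arise. Hence $\mathcal{C}^*(H, F)$ is finite, and $H$ is a C-monoid defined in $F$.

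From standard C-monoid theory (\cite[Chapter 2]{Ge-HK06a}), $H$ is a Mori monoid; combined with Lemma \ref{3.2} (which identifies $H \setminus H^{\times} = (H \DP F)$ as a $v$-ideal of $H$) and \cite[Theorem 2.2.5]{Ge-HK06a}, $H$ is $v$-local. For finiteness of $s$-$\spec(H)$: every nonempty prime $s$-ideal of $H$ is contained in $(H \DP F)$, so $(H \DP F)$ is the unique one containing the conductor, while the remaining nonempty primes correspond under contraction/extension to prime $s$-ideals of $F$ not containing $(H \DP F)$, a subset of the $2^n$-element set $s$-$\spec(F)$. Listing the nonempty primes of $H$ as $\mathfrak{p}_1, \ldots, \mathfrak{p}_m$ and choosing $h_i \in \mathfrak{p}_i$, the product $h_1 \cdots h_m$ lies in every $\mathfrak{p}_i$ by the $s$-ideal property, so $\bigcap_{\mathfrak{p} \ne \emptyset} \mathfrak{p} \ne \emptyset$ and $H$ is a G-monoid.

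For the converse, I argue by contrapositive and assume $|P| = \infty$. For each $p \in P$, the set $\mathfrak{p}_p := \{h \in H : \mathsf{v}_p(h) > 0\}$ is a nonempty prime $s$-ideal of $H$: nonempty since $fp \in fF \subset H$ for any $f \in (H \DP F)$; an $s$-ideal because valuations add under multiplication; prime because $H \setminus \mathfrak{p}_p$, consisting of the $h \in H$ with $\mathsf{v}_p(h) = 0$, is closed under multiplication. But every $h \in H \subset F$ has finite support, so $\bigcap_{p \in P} \mathfrak{p}_p = \emptyset$, showing that $H$ is not a G-monoid. The main obstacle in the whole proof is the Dickson's-lemma verification of finiteness of $\mathcal{C}^*(H, F)$; once the C-monoid property is secured, the remaining conclusions follow from standard C-monoid and Mori theory together with the conductor correspondence for the $s$-spectrum.
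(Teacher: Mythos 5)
Your argument is correct and establishes everything the theorem claims, but it travels a noticeably more self-contained road than the paper. For the key step --- the \C-monoid property when $P$ is finite --- the paper invokes the criterion of \cite[Theorem 2.9.7]{Ge-HK06a} and constructs a single exponent $\alpha$ from the minimal points (Dickson's Lemma) of the exponent set of $H\setminus H^{\times}$; you instead verify finiteness of the reduced class semigroup directly, noting that for $y\in F\setminus F^{\times}$ the class $[y]$ is determined by the up-set $S_{v(y)}$ and that only finitely many such up-sets occur because the truncations $\max(m_i-w,0)$ range over a finite box. Both arguments rest on Dickson's Lemma; yours bypasses the intermediate criterion. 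Downstream, the paper cites \cite[Theorem 2.9.15.3]{Ge-HK06a} for the \G-property and the finiteness of $s$-$\spec(H)$, whereas you obtain both by hand: the conductor correspondence for prime $s$-ideals (every nonempty prime of $H$ other than $H\setminus H^{\times}$ is the contraction of one of the $2^{n}$ primes of $F$ --- stated tersely, but the standard argument with an element $f\in(H\DP F)\setminus\mathfrak q$ goes through verbatim for $s$-ideals), followed by the product trick for the \G-property. Finally, for the implication that a Mori \G-monoid forces $F_{\red}$ to be finitely generated, the paper quotes \cite[Theorem 2.7.9.1]{Ge-HK06a}, while your contrapositive exhibits, for infinite $P$, the explicit family of nonempty primes $\{h\in H\colon \mathsf v_p(h)>0\}$ with empty intersection. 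The paper's route buys brevity; yours buys independence from the heavier machinery of \cite{Ge-HK06a}, at the cost of a few routine verifications that you only sketch.
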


\begin{proof}
	Since $H\subset F$ is a conductor submonoid, it follows from Lemma \ref{3.2} that $\widehat{H}=F$ and  $H\setminus H^{\times}=(H:F)\neq \emptyset$ is a $v$-ideal. By \cite[Theorem 2.2.5]{Ge-HK06a}, we obtain $H$ is $v$-local.

 If $H$ is a Mori \G-monoid, then by 
\cite[Theorem 2.7.9.1]{Ge-HK06a}, we obtain $F_{\red}=\widehat{H}_{\red}$ is finitely generated.
Conversely, suppose that $F_{\red}$ is finitely generated, say $F = F^{\times} \times \mathcal F ( \{p_1, \ldots, p_s\})$. We claim that $H$ is a C-monoid defined in $F$. Then $H$ is Mori (because all C-monoids are Mori) and it follows from \cite[Theorem 2.9.15.3]{Ge-HK06a} that $H$ is a \G-moniod and $s$-$\spec (H)$ is finite. Thus, 
by \cite[Theorem 2.9.7]{Ge-HK06a}, we need to show that there is an $\alpha \in \N$ such that for all $j \in [1,s]$ and all $a \in p_j^{\alpha}F$ we have
\[
a \in H \quad \text{if and only if} \quad p_j^{\alpha}a \in H \,.
\]
Let $j \in [1,s]$, $\alpha \in \N$ arbitrary,  and $a \in p_j^{\alpha}F$. If $a \in H$, then $a \not\in H^{\times}$, whence $p_j^{\alpha} a \in (H \setminus H^{\times})F \subset H \setminus H^{\times}$.

Conversely, we start with the definition of  a suitable $\alpha$. We have $(H \setminus H^{\times})F \subset H \setminus H^{\times}$. Thus, if $b = \varepsilon p_1^{k_1} \cdot \ldots \cdot p_s^{k_s} \in H \setminus H^{\times}$, then $\varepsilon' p_1^{k_1'} \cdot \ldots \cdot p_s^{k_s'} \in H \setminus H^{\times}$ for all $\varepsilon' \in F^{\times}$ and all $k_i' \in \N_{\ge k_i}$. We set
\[
M = \{(k_1, \ldots, k_s) \in \N_0^s \colon F^{\times} p_1^{k_1} \cdot \ldots \cdot p_s^{k_s} \subset H \setminus H^{\times} \}
\]
and observe that the set of minimal points $\text{\rm Min} (M)$ is finite by Dickson's Lemma  (\cite[Theorem 1.5.3]{Ge-HK06a}). We define
\[
\alpha = \max \{k_i \colon i \in [1,s], (k_1, \ldots, k_s) \in \text{\rm Min} (M) \}  \,.
\]
Now consider an element $a = p_j^{\alpha}F$ with $p_j^{\alpha}a \in H$ for some $j \in [1,s]$, say $j=1$. Since $p_1^{\alpha}a \in H$, there is $(k_1^*, \ldots, k_s^*) \in \text{\rm Min} (M)$ with $k_1^* \le \alpha + \mathsf v_{p_1}(a)$ and with $k_i^* \le \mathsf v_{p_i} (a)$ for all $i \in [2,s]$. Since $\mathsf v_{p_1} (a) \ge \alpha \ge k_1^*$, it follows that $k_i^* \le \mathsf v_{p_i} (a)$ for all $i \in [1,s]$, whence $a \in H$.
\end{proof}

\smallskip
\begin{definition} \label{3.6}
Let $F$ be a factorial monoid and let $H \subset F$ be a submonoid with $H^{\times}=F^{\times}\cap H$ and $H\neq H^{\times}$.
Then  \begin{enumerate}
	\item $\mathcal G (H) = F \setminus H$ is called the {\it gap set} of $H$.
	
	\item $H$ is called {\it gap absorbing} if 
	\[
	\mathcal G (H)   \mathcal A (H)    \subset \mathcal A (H) \cup \mathcal A (H)  \mathcal A (H) \,.
	\]
	
	\item An element $a\in H\setminus H^{\times}$ is called {\it minimal} if for every $b\in H\setminus H^{\times}$ with $b\mid_F a$, we have $b\in aH^{\times}$.
\end{enumerate} 
\end{definition}

We denote by $\mathcal M(H)$ the set of all minimal elements of $H$. Then $\mathcal M(H)\subset \mathcal A(H)$.

\begin{lemma}\label{minimal}
	Let $F$ be a factorial monoid and let $H \subset F$ be a submonoid with $H^{\times}=F^{\times}\cap H$ and $H\neq H^{\times}$.
	Then for every $a\in \mathcal A(H)\setminus \mathcal M(H)$, there exists $s\in \mathcal G(H)$ such that $s^{-1}a\in \mathcal M(H)$.
\end{lemma}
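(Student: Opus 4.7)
The strategy is to exploit the factoriality of $F$: divisibility in $F$ gives a well-founded partial order on associate classes of divisors, so among the $H$-non-unit $F$-divisors of $a$, there exists one, call it $b$, that is $\mid_F$-minimal. Such a $b$ will turn out to lie in $\mathcal M(H)$, and the complementary quotient $s:=a/b$ will lie outside $H$ because $a$ is an atom.

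Concretely, I would introduce the set $D := \{b \in H \setminus H^{\times} : b \mid_F a\}$, which is nonempty since it contains $a$. Since $a$ has only finitely many non-associate divisors in $F$, the set $D$ meets only finitely many $\sim_F$-classes, so one can choose $b \in D$ whose $\sim_F$-class is $\mid_F$-minimal among those classes meeting $D$.

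Next I would verify that $b \in \mathcal M(H)$. If $c \in H \setminus H^{\times}$ with $c \mid_F b$, then $c \mid_F a$ by transitivity, so $c \in D$; the minimality of $b$'s class then forces $c \sim_F b$, i.e., $c = b\varepsilon$ for some $\varepsilon \in F^{\times}$. A short argument using $b, c \in H$ together with $H^{\times} = F^{\times} \cap H$ is intended to yield $\varepsilon \in H^{\times}$, hence $c \in bH^{\times}$, which is exactly what minimality demands.

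Finally I would set $s := ab^{-1} \in F$, well-defined since $b \mid_F a$, and rule out $s \in H$. If $s \in H^{\times}$, then $b = as^{-1} \in aH^{\times}$, so $b$ and $a$ are $H$-associates; since minimality is $H^{\times}$-invariant and $b \in \mathcal M(H)$, this forces $a \in \mathcal M(H)$, contradicting the hypothesis. If $s \in H \setminus H^{\times}$, then $a = sb$ is a non-trivial factorization of the atom $a$, again a contradiction. Hence $s \in F \setminus H = \mathcal G(H)$ and $s^{-1}a = b \in \mathcal M(H)$, as required. The main obstacle is the passage from $c \sim_F b$ to $c \in bH^{\times}$ in the third paragraph: this is where the factoriality of $F$ and the equality $H^{\times} = F^{\times} \cap H$ must be combined to convert an $F$-associate relation into an $H$-associate relation.
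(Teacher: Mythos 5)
Your strategy coincides with the paper's: the paper chooses $b\in H\setminus H^{\times}$ a divisor of $a$ in $F$ of minimal length $|b|_F$ (where you choose a representative of a $\mid_F$-minimal associate class — the two selections are interchangeable for this purpose), asserts $b\in\mathcal M(H)$, and then runs exactly your closing dichotomy ($s=b^{-1}a$ cannot lie in $H^{\times}$ because $a\notin\mathcal M(H)$, nor in $H\setminus H^{\times}$ because $a$ is an atom) to conclude $s\in\mathcal G(H)$. So the skeleton is right and matches the source.

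However, the step you honestly flag as ``the main obstacle'' is a genuine gap, and it cannot be closed from the stated hypotheses. From $c=b\varepsilon$ with $b,c\in H$ and $\varepsilon\in F^{\times}$ you only get $\varepsilon\in\mathsf q (H)$, not $\varepsilon\in H$, so the identity $H^{\times}=F^{\times}\cap H$ has nothing to bite on. In fact the conclusion fails as literally stated when $F$ has units outside $H$: take $F=\{\pm 2^{n}\colon n\in\N_0\}\cong\{\pm 1\}\times\mathcal F(\{2\})$ and $H=\{1\}\cup\{\pm 2^{n}\colon n\ge 1\}$. Then $H^{\times}=\{1\}=F^{\times}\cap H$ and $\mathcal A(H)=\{2,-2\}$, but $\mathcal M(H)=\emptyset$, since every $a\in H\setminus H^{\times}$ is divisible in $F$ by both $2$ and $-2$ and these cannot both lie in the singleton $aH^{\times}$; hence no $s\in\mathcal G(H)=\{-1\}$ works for $a=2$. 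The paper's own proof makes the same silent jump (it asserts ``Then $b\in\mathcal M(H)$'' with no justification), so the defect lies in the definition of minimality rather than in your execution: the argument closes exactly when $F$-associated elements of $H$ are automatically $H$-associated — for instance when $F$ is reduced, which is the situation in the application inside Theorem \ref{5.1} where $H_0\subset\mathcal F(P)$ — or after replacing $aH^{\times}$ by $aF^{\times}$ in Definition \ref{3.6}.3. You should either add such a hypothesis or prove the upgraded associate statement; as written, the ``short argument'' you promise does not exist.
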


\begin{proof}
 Let 	$a\in \mathcal A(H)\setminus \mathcal M(H)$. Let $b\in H\setminus H^{\times}$ be a divisor of $a$ in $F$ with minimal length.
Then $b\in \mathcal M(H)\subset \mathcal A(H)$. If $b^{-1}a\in H^{\times}$, then $a\in bH^{\times}\subset \mathcal M(H)$, a contradiction. Thus $b^{-1}a\not\in H^{\times}$. It follows from $a\in \mathcal A(H)$ that  $b^{-1}a\not\in H\setminus H^{\times}$, whence $s:=b^{-1}a\in F\setminus H=\mathcal G(H)$.	
\end{proof}

The following lemma shows that  the previous definition coincides with the definition of gap absorbing monoids given by Cisto,  Garc\' ia-S\' anchez, and Llena \cite[Section 2]{Ci-GS-LL25a}, in the setting of  free abelian monoids.

\smallskip
\begin{lemma}\label{3.7}
	Let $F$ be a factorial monoid and let $H \subset F$ be a gap absorbing submonoid. 
Then $H$ is a conductor submonoid  and  $$\mathcal G (H)  \mathcal G (H) \ \subset \ \mathcal G (H) \cup H^{\times}\cup \mathcal A (H) \cup \mathcal A (H)  \mathcal A (H)\,.$$
\end{lemma}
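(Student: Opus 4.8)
The plan is to prove the two assertions in turn: first that a gap absorbing $H$ is a conductor submonoid, and then the product-set inclusion, which reduces to a bound on factorization lengths.

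For the first assertion I would invoke Lemma \ref{3.2}: it suffices to show that $H\setminus H^{\times}$ is an $s$-ideal of $F$, i.e. $fa\in H\setminus H^{\times}$ for all $a\in H\setminus H^{\times}$ and $f\in F$. Since $H^{\times}=F^{\times}\cap H$ and $F$ is a BF-monoid, $H$ is a BF-monoid by \cite[Corollary 1.3.3]{Ge-HK06a}, hence atomic, so I may write $a=ua'$ with $u\in\mathcal A(H)$ and $a'\in H$. If $f\in H$ then $fa\in H$. If $f\in F\setminus H=\mathcal G(H)$, then $fu\in\mathcal G(H)\mathcal A(H)\subset\mathcal A(H)\cup\mathcal A(H)\mathcal A(H)\subset H$ by the gap absorbing property, whence $fa=(fu)a'\in H$. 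In either case $fa\in H$, and $fa\notin H^{\times}$ because $fa\in F^{\times}$ would force $a\in F^{\times}\cap H=H^{\times}$. Thus $H\setminus H^{\times}$ is an $s$-ideal and $H$ is a conductor submonoid.

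For the inclusion, let $s,t\in\mathcal G(H)$. If $st\in\mathcal G(H)\cup H^{\times}$ there is nothing to prove, so assume $st\in H\setminus H^{\times}$; the goal becomes $st\in\mathcal A(H)\cup\mathcal A(H)\mathcal A(H)$, that is, some factorization of $st$ has length at most two. After passing to $H_{\red}\subset F_{\red}$ (Lemma \ref{3.4}.3) I may take $F$ reduced, so $F=\mathcal F(P)$ and $H^{\times}=\{1\}$. The mechanism is that $H\setminus H^{\times}$ is now an $s$-ideal of $F$: any element of $H\setminus H^{\times}$ dividing $s$ (or $t$) in $F$ would force $s\in (H\setminus H^{\times})F\subset H\setminus H^{\times}$, contradicting $s\in\mathcal G(H)$. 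I would then induct on $|st|_F=|s|_F+|t|_F$. Choose, via Lemma \ref{minimal} and $\mathcal M(H)\subset\mathcal A(H)$, a minimal element $m\in\mathcal M(H)$ with $m\mid_F st$; by the preceding remark $m\nmid_F s$ and $m\nmid_F t$. Splitting $m=m_sm_t$ with $\mathsf v_p(m_s)=\min(\mathsf v_p(m),\mathsf v_p(s))$ gives $m_s\mid_F s$, $m_t\mid_F t$; both are proper non-unit divisors of $m$, so minimality of $m$ forces $m_s,m_t\in\mathcal G(H)$. Writing $s=m_s\sigma$, $t=m_t\tau$, and $c=m^{-1}st=\sigma\tau$, the $s$-ideal property gives $\sigma,\tau\in\mathcal G(H)\cup\{1\}$. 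If $c\in\mathcal G(H)\cup\{1\}$ then $st=cm$ is either the atom $m$ or lies in $\mathcal G(H)\mathcal A(H)$, and gap absorbing finishes. Otherwise $c=\sigma\tau\in H\setminus H^{\times}$ with $\sigma,\tau\in\mathcal G(H)$, and the two auxiliary products $s\tau=m_sc$ and $\sigma t=m_tc$ again lie in $H\setminus H^{\times}$, are strictly shorter, and are products of two gaps; by induction each lies in $\mathcal A(H)\cup\mathcal A(H)\mathcal A(H)$. Since $st=m_t(s\tau)=m_s(\sigma t)$, a single use of gap absorbing collapses $st$ to at most two atoms the moment one of $s\tau,\sigma t$ is an atom.

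The main obstacle is precisely closing this induction. A priori both $s\tau$ and $\sigma t$ could be products of two atoms rather than single atoms, in which case the naive recombination only delivers a length-three factorization of $st$; indeed a purely combinatorial count on prime exponents does not force length two (three nonzero exponent-vectors summing to $s+t$ with no sub-sum below $s$ or below $t$ can occur), so the gap absorbing hypothesis must be used in an essential, non-formal way to manufacture a short factorization. The route I expect to work is to prove the sharper statement that there exist $g\in\mathcal G(H)\cup\{1\}$ and $u\in\mathcal A(H)$ with $st=gu$, after which $st=gu\in\mathcal A(H)\cup\mathcal A(H)\mathcal A(H)$ is immediate from gap absorbing. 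To locate such a splitting I would make a global extremal choice — for instance pick $s^{*}\mid_F s$ a gap of minimal length with $s^{*}t\in H\setminus H^{\times}$, and exploit its minimality to force the cofactor of the minimal element dividing $s^{*}t$ to be a gap or a unit, so that the minimal element absorbs exactly the right primes. Verifying once and for all that this $g$–$u$ decomposition exists (equivalently, that a product of two gaps lying in $H\setminus H^{\times}$ never has all its factorizations of length $\ge 3$) is the crux of the argument.
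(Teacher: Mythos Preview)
Your first part (that $H$ is a conductor submonoid) is correct and matches the paper's argument verbatim.

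For the second inclusion, you correctly diagnose that the induction fails to close when both $s\tau$ and $\sigma t$ land in $\mathcal A(H)\mathcal A(H)\setminus\mathcal A(H)$, and you correctly intuit that an extremal choice producing a decomposition $st=gu$ with $g\in\mathcal G(H)\cup\{1\}$ and $u\in\mathcal A(H)$ is the way out. But you stop at ``the crux'' without verifying it, and the mechanism you sketch (``force the cofactor of the minimal element dividing $s^*t$ to be a gap'') is not the one that actually fires. Here is the missing step for your own choice of $s^*\mid_F s$ a gap of minimal length with $s^*t\in H$: pick a prime $p\mid_F s^*$; then $p^{-1}s^*\in\mathcal G(H)\cup\{1\}$ (divisor of a gap), and by minimality of $|s^*|_F$ one has $(p^{-1}s^*)t\notin H$, i.e.\ $(p^{-1}s^*)t\in\mathcal G(H)$. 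Thus $s^*t=p\cdot\big((p^{-1}s^*)t\big)$ is a prime times a gap; if $s^*t=ab$ with $a,b\in H\setminus\{1\}$ then, say, $p\mid_F a$ and $(p^{-1}s^*)t=(p^{-1}a)b\in bF\subset H$ by the $s$-ideal property, a contradiction. Hence $s^*t\in\mathcal A(H)$, and $st=(s/s^*)(s^*t)$ with $s/s^*\in\mathcal G(H)\cup\{1\}$ finishes via gap absorbing. So drop the induction altogether and run this five-line argument instead.

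The paper's proof takes a different extremal route: it allows \emph{arbitrary} decompositions $st=s_3s_4$ with $s_3,s_4\in\mathcal G(H)\cup F^{\times}$ (not restricted to $s_3\mid_F s$) and picks $s_3$ of \emph{maximal} length; shifting one prime $p$ from $s_4$ to $s_3$ then forces $ps_3\in\mathcal A(H)$ by maximality, while $p^{-1}s_4\in\mathcal G(H)\cup F^{\times}$. Both approaches produce the same $(\text{gap or unit})\cdot(\text{atom})$ splitting; yours is in fact a bit cleaner because it stays inside divisors of $s$ and avoids the side argument ruling out $s_4\in F^{\times}$.
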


\begin{proof}
 First, we show that $H\setminus H^{\times}$ is an $s$-ideal of $F$, which implies that $H$ is a conductor monoid. Let $a\in H\setminus H^{\times}$ and $f\in F$. Then $af\not\in H^{\times}$.  Since $a\in H\setminus H^{\times}\subset F\setminus F^{\times}$, there exists $b\in \mathcal A(H)$ and $c\in H$ such that $a=bc$. If $f\in H$, then $af\in H$. If $f\in \mathcal G(H)$, then  Definition \ref{3.6} implies that $bf\in H$ and hence $af=cbf\in H$.
	
Next, we show the asserted inclusion of the product set $\mathcal G (H)  \mathcal G (H)$.
Let $s_1,s_2\in \mathcal G(H)$ such that $s_1s_2\in \mathcal G (H)  \mathcal G (H) \setminus \mathcal G (H)\subset H$.
	 If $s_1,s_2\in F^{\times}\setminus H^{\times}$, then $s_1s_2\in F^{\times}\cap H=H^{\times}$.
Otherwise there exists $j\in [1,2]$, say $j=1$, such that $s_1\not\in F^{\times}$ and hence $|s_1|_F\ge 1$. It follows $s_1s_2\in H\setminus H^{\times}$, whence there exist an atom $b\in \mathcal A(H)$ and $c\in H$ such that $bc=s_1s_2$.
We can decompose $s_1s_2=s_3s_4$, with $s_3,s_4\in \mathcal G(H)\cup F^{\times}$, such that $s_3$ has maximal length in $F$.
	 If $s_4\in F^{\times}$, then $s_3\not\in F^{\times}$ and  $s_3=s_4^{-1}s_1s_2=s_4^{-1}cb\in bF\subset H$ since $H\setminus H^{\times}$ is an $s$-ideal of $F$, a contradiction.
	 Thus $s_4\in \mathcal G(H)\setminus F^{\times}$ and hence $|s_4|_F\ge 1$. Let $p\in F$ be prime such that $p\mid_F s_4$. Since $H\setminus H^{\times}$ is an $s$-ideal of $F$, it follows from $s_4\in \mathcal G(H)$ that $p^{-1}s_4\in \mathcal G(H)\cup F^{\times}$.  
	 Since $s_3\in \mathcal G(H)$,  we have that $ps_3\in \mathcal G(H)\cup \mathcal A(H)$ with $|ps_3|_F>|s_3|_F$, whence the maximality of $|s_3|_F$ implies that $ps_3\in \mathcal A(H)$. Now $s_1s_2=(p^{-1}s_4)(ps_3)$ and the assertion follows by Definition \ref{3.6}.
\end{proof}

Our next theorem and its corollary answer Conjectures \cite[Conjecture 2.3]{Ci-GS-LL25a}, \cite[Conjecture 8.4]{Ci-GS-LL25a}, and \cite[Conjecture 4.16]{Ba23a} in the affirmative.

\begin{theorem} \label{3.8}
Let $F$ be a factorial monoid and let $H \subset F$ be a submonoid. Then $H$ is a conductor submonoid if and only if it is gap absorbing.
\end{theorem}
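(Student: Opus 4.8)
The plan is to prove the two implications separately. The implication ``gap absorbing $\Longrightarrow$ conductor submonoid'' is already contained in Lemma \ref{3.7}, so the whole work lies in the converse, and I assume from now on that $H$ is a conductor submonoid. By Lemma \ref{3.2} we have $H^{\times} = F^{\times} \cap H$ and $H \setminus H^{\times} = (H \DP F)$ is an $s$-ideal of $F$; in particular the standing hypotheses of Definition \ref{3.6} hold, and it remains to verify the inclusion
\[
\mathcal G (H) \, \mathcal A (H) \ \subset \ \mathcal A (H) \cup \mathcal A (H) \mathcal A (H) \,.
\]
Passing to $H_{\red} \subset F_{\red}$ by Lemma \ref{3.4}.3, I may assume $H$ and $F$ are reduced, so that $F$-divisibility is a partial order and $H \setminus H^{\times}$ is an up-set for it. Fix $s \in \mathcal G (H)$ and $u \in \mathcal A (H)$. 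Since $H \setminus H^{\times}$ is an $s$-ideal and $u \in H \setminus H^{\times}$, we get $su \in (H \setminus H^{\times})F \subset H \setminus H^{\times}$, so $su$ is a nonunit of $H$; the goal becomes to show that $su$ is a product of at most two atoms, i.e. $\min \mathsf L (su) \le 2$.

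The reliable backbone of the argument is a \emph{distribution} observation that exploits the gap property of $s$. Given any atomic factorization $su = v_1 \cdot \ldots \cdot v_k$, factoriality of $F$ and $u \mid_F v_1 \cdot \ldots \cdot v_k$ let me split the prime factorization of $u$ as $u = u_1 \cdot \ldots \cdot u_k$ with $u_i \mid_F v_i$; writing $s_i = v_i u_i^{-1} \in F$ gives $v_i = u_i s_i$ and $s = s_1 \cdot \ldots \cdot s_k$. If some $s_i$ lay in $H \setminus H^{\times}$, then $s \in (H \setminus H^{\times}) F \subset H \setminus H^{\times}$, contradicting $s \in \mathcal G (H)$; hence every $s_i$ is a unit or a gap. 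Likewise, if some $u_i$ were a unit, then $s_i = v_i u_i^{-1} \in v_i H^{\times} \subset H \setminus H^{\times}$, again impossible; hence every $u_i$ is a nonunit, and in particular $k \le |u|_F$. Thus each atom $v_i$ decomposes as a genuine nonunit part $u_i$ of $u$ times a gap-or-unit part $s_i$ of $s$.

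What remains — and what I expect to be the main obstacle — is to prune $k$ down to at most two. Concretely, assuming $su \notin \mathcal A (H)$, there is an interior splitting $su = xy$ with $x, y \in H \setminus H^{\times}$, and the distribution above produces $u = a_1 a_2$ and $s = b_1 b_2$ in $F$ with $x = a_1 b_1$, $y = a_2 b_2$, both $a_i$ nonunits and both $b_i$ gaps-or-units. The task is to shrink $x$ and $y$ \emph{simultaneously} to atoms while preserving $xy = su$, equivalently to choose the factorization of $su$ of minimal length and rule out $k \ge 3$ by an exchange that shortens it yet stays within atoms. The delicate point is that the two recombined pieces must be made atoms, not merely nonunits; here the minimality of the chosen factorization and Lemma \ref{minimal} (which lets me drop to a minimal element below any nonunit $H$-factor) are the natural levers, together with the atom property of $u$, which forbids two complementary sub-products of the $u_i$ from both lying in $H \setminus H^{\times}$.

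Finally, I would stress why the two tempting extremal choices are dead ends, since this pins down where the real difficulty sits. Taking the atom $v$ of maximal length with $u \mid_F v \mid_F su$ only yields $su = v r$ with $r \mid_F s$, and such an $r$ is forced to be a unit or a gap (a divisor of $s$ cannot lie in the up-set $H \setminus H^{\times}$); this gives $su \in \mathcal A(H)$ exactly when $r$ is a unit, but otherwise presents $su$ as \emph{atom times gap}, not as a product of two atoms. Dually, peeling off a single minimal divisor $m \mid_F u$ leaves a gap cofactor whenever $u$ is itself minimal. The correct two atoms must each absorb a genuine portion of both $s$ and $u$ — the decomposition necessarily \emph{splits} $u$ — so the crux is precisely the exchange argument that realizes such a balanced split. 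Once $k \le 2$ is secured, $su \in \mathcal A (H) \cup \mathcal A (H) \mathcal A (H)$ follows, which establishes the inclusion and hence the theorem.
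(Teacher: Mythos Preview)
Your reduction via Lemma \ref{3.7}, the passage to the reduced monoid, and the distribution observation are all sound and match the opening moves of the paper's argument. You have also correctly located the crux: given $s \in \mathcal G(H)$ and $u \in \mathcal A(H)$ with $su \notin \mathcal A(H)$, one must \emph{construct} a splitting $su = xy$ with both $x,y \in \mathcal A(H)$, and you rightly note that each factor must absorb a genuine piece of both $s$ and $u$.

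The proposal, however, stops precisely at this obstacle. You name ``the minimality of the chosen factorization and Lemma \ref{minimal}'' as levers but never actually pull them; your suggested route --- take a minimal-length factorization $su=v_1\cdots v_k$ and derive a contradiction for $k\ge 3$ via an unspecified exchange --- is not carried out, and from the distribution $u=u_1\cdots u_k$, $s=s_1\cdots s_k$ alone there is no evident recombination into fewer \emph{atoms}. The paper's mechanism differs in two concrete respects. First, Lemma \ref{minimal} is used \emph{at the outset} to write $u=as_2$ with $a\in\mathcal M(H)$ and $s_2\in\mathcal G(H)\cup\{1\}$, and one then branches on whether $ss_2\in H$ or $ss_2\in\mathcal G(H)$. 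In the first case one picks a shortest $F$-divisor $t$ of $s_2$ with $st\in H$; minimality of $|t|_F$ forces $st\in\mathcal A(H)$, and Lemma \ref{3.4}.4 (applied to $u=t\cdot(t^{-1}u)$) gives $t^{-1}u\in\mathcal A(H)$, so $su=(st)(t^{-1}u)$. In the second case one sets $\sigma=ss_2\in\mathcal G(H)$, takes any splitting $\sigma a=su=xy$ into nonunits, records the induced $\sigma=b\cdot(b^{-1}\sigma)$ and $a=c\cdot(c^{-1}a)$ with $x=c(b^{-1}\sigma)$, $y=b(c^{-1}a)$, and then chooses $(b,c)$ with $|b|_F+|c|_F$ minimal. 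It is the minimality of $a$ in $H$ (not of $u$) that forces $c^{-1}a\in\mathcal G(H)$, and this is exactly what blocks any further factorization of $x$ or $y$: such a factorization would allow one to strictly lower $|b|_F+|c|_F$. Your sketch never commits to the reduction to $\mathcal M(H)$, and without it the minimality argument has no purchase.

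A small side remark: Lemma \ref{3.4}.3 gives $H_{\red}\subset F/H^{\times}$, not $F/F^{\times}$, so $F$ need not become reduced; this is harmless, since only $H^{\times}=\{1\}$ is used downstream.
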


\begin{proof}
If $H$ is gap absorbing, then $H$ is a conductor submonoid by Lemma \ref{3.7}. Conversely, suppose that 
$H \subset F = F^{\times} \times \mathcal F (P)$ is a conductor submonoid, whence $(H\setminus H^{\times})F=H\setminus H^{\times}$ and hence $\mathcal G (H)   \mathcal A (H)\subset H\setminus H^{\times}$. 
 We need to show 
\[ 
 \mathcal G (H)   \mathcal A (H)    \setminus\mathcal A (H) \subset \mathcal A (H)  \mathcal A (H) \,.
\] 
Let $s_1\in \mathcal G (H)$ and $a^*\in \mathcal A (H)$ be such that $s_1a^*\in H\setminus \mathcal A (H)$. 
Then $s_1a^*\not\in H^{\times}$, $s_1\not\in F^{\times}$, and Lemma \ref{minimal} implies that there exist $s_2\in \mathcal G(H)\cup \{1\}$ and $a\in \mathcal M(H)$ such that  $a^*=as_2$.
 
Suppose $s_1s_2\in H$. We choose a divisor $s$ of $s_2$ in $F$ such that $s_1s\in H$ with $|s|_F$ minimal. Since $s_1\not\in F^{\times}$, we have $s_1s\in H\setminus H^{\times}$. If $s_1s\not\in \mathcal A(H)$, then $s_1s=x_1x_2$ for some $x_1,x_2\in H\setminus H^{\times}$, which implies that there are decompositions 
\[
s_1=b\cdot (b^{-1}s_1) \text{ and } s=c\cdot (c^{-1}s), \text{ where }b, b^{-1}s_1, c, c^{-1}s\in F \text{ such that } c(b^{-1}s_1), b(c^{-1}s)\in H\setminus H^{\times}\,.
\]
Thus $s_1c=bc(b^{-1}s_1)\in c(b^{-1}s_1)F\subset H$. By the minimality of $|s|_F$, we obtain that $c^{-1}s\in F^{\times}$ and hence $b=(s^{-1}c)b(c^{-1}s)\in F^{\times}(H\setminus H^{\times})\subset H\setminus H^{\times}$, which implies that $s_1=b\cdot (b^{-1}s_1)\in bF\subset H$, a contradiction.  Thus $s_1s\in \mathcal A(H)$. Since $s^{-1}a^*\in \mathcal A(H)$ by Lemma \ref{3.4}.4, it follows that $s_1a^*=(s_1s)(s^{-1}a^*)\in \mathcal A (H)  \mathcal A (H)$.

Suppose $s:=s_1s_2\in \mathcal G(H)$. Since $sa=s_1a^*\not\in \mathcal A(H)$, we have $sa=a_1a_2$ for some $a_1,a_2\in H\setminus H^{\times}$, which implies that there are decompositions 
 \[
 s=b\cdot (b^{-1}s) \text{ and } a=c\cdot (c^{-1}a), \text{ where }b, b^{-1}s, c, c^{-1}a\in F \text{ such that } c(b^{-1}s), b(c^{-1}a)\in H\setminus H^{\times}\,.
 \]
 We may choose the pair $(b,c)$ satisfying the above property such that $|b|_F+|c|_F$ is minimal. If $c\in F^{\times}$, then $b^{-1}s\in H\setminus H^{\times}$ and hence $s=b(b^{-1}s)\in b^{-1}sF\subset H$, a contradiction. Thus $c\not\in F^{\times}$ and since $a\in \mathcal M(H)$, we have
 \begin{equation}\label{***}
   c^{-1}a\in \mathcal G(H)\,.
 \end{equation}
 
 If $c(b^{-1}s)\not\in \mathcal A(H)$, then $c(b^{-1}s)=h_1h_2$ for some $h_1,h_2\in H\setminus H^{\times}$, which implies that $c=c_1c_2$ and $b^{-1}s=d_1d_2$ with $h_1=c_1d_1$ and $h_2=c_2d_2$, where $c_1,c_2,d_1,d_2\in F$. Thus 
 $$c_1(b^{-1}s)=c_1d_1d_2=d_2h_1\in h_1F\subset H\setminus H^{\times}\,.$$ 
 Moreover, $b(c^{-1}a)\in H\setminus H^{\times}$ implies that $$b(c_1^{-1}a)=c_2b(c^{-1}a)\in b(c^{-1}a)F\subset H\setminus H^{\times}\,.$$
 Now we have decompositions
 \[
 s=b\cdot (b^{-1}s) \text{ and } a=c_1\cdot (c_1^{-1}a), \text{ where }b, b^{-1}s, c_1, c_1^{-1}a\in F \text{ such that } c_1(b^{-1}s), b(c_1^{-1}a)\in H\setminus H^{\times}\,.
 \]
 Since $c_2\in F^{\times}$ implies that  $s=bd_1d_2=c_2^{-1}bd_1h_2\in h_2F\subset H$, it follows from \eqref{***}  that $c_2\notin F^{\times}$, whence
 $|c_1|_F<|c|_F$.  Therefore $|b|_F+|c_1|_F<|b|_F+|c|_F$, a contradiction to our choice of $(b,c)$. Thus $c(b^{-1}s)\in \mathcal A(H)$.
 
  If $b(c^{-1}a)\not\in \mathcal A(H)$, then $b(c^{-1}a)=h_1h_2$ for some $h_1,h_2\in H\setminus H^{\times}$, which implies that $b=b_1b_2$ and $c^{-1}a=d_1d_2$ with $h_1=b_1d_1$ and $h_2=b_2d_2$, where $b_1,b_2,d_1,d_2\in F$. Thus $$b_1(c^{-1}a)=b_1d_1d_2=d_2h_1\in h_1F\subset H\setminus H^{\times}\,.$$
   Moreover, $c(b^{-1}s)\in H\setminus H^{\times}$ implies that $$c(b_1^{-1}s)=b_2c(b^{-1}s)\in c(b^{-1}s)F\subset H\setminus H^{\times}\,.$$
 Since $b_2\in F^{\times}$ implies $c^{-1}a=d_1d_2=b_2^{-1}d_1h_2\in h_2F\subset H$, it follows from \eqref{***}  that  $b_2\notin F^{\times}$, whence
 $|b_1|_F<|b|_F$.  Therefore $|b_1|_F+|c|_F<|b|_F+|c|_F$, a contradiction to our choice of $(b,c)$. Thus $b(c^{-1}a)\in \mathcal A(H)$, whence $sa=s_1a^*=(cb^{-1}s)(bc^{-1}a)\in \mathcal A (H)  \mathcal A (H)$.
\end{proof}

\smallskip
We continue with a discussion of examples. In addition to examples already given in the literature (e.g., \cite{Ba23a, Ci25a, Ci-GS-LL25a}), we also provide examples from ring theory.  In particular, we want to point out that conductor submonoids need neither be seminormal nor finitely generated. Examples of a very different kind will be given in Section \ref{4}.

\smallskip
\begin{example}~ \label{3.9}

1. Let $H \subset F = \mathcal F (P)$ be a conductor submonoid such that $F \setminus H$ is finite. Conductor submonoids of this type are studied in detail in \cite{Ba23a}. In particular, if $P$ is finite, then $H$ is finitely generated by \cite[Proposition 3.3]{Ba23a}, and these monoids are a special class of the submonoids studied by Gotti in \cite{Go20a,Go20c}.
The case when $P$ and $F \setminus H$ are finite is studied in detail in \cite{Ci25a}. To be more explicit, we switch to additive notation. Suppose that $F = (\N_0^s, +)$ with $s \in \N$ and let $\boldsymbol e_1, \ldots, \boldsymbol e_s$ denote the canonical basis vectors. A submonoid $H \subset F$ is called a {\it generalized numerical monoid} if $F \setminus H$ is finite. Choose incomparable elements $u_1, \ldots, u_t \in \N_0^s$ and let $\mathfrak a = \bigcup_{i=1}^t (u_i + \N_0^s)$ denote the $s$-ideal generated by $u_1, \ldots, u_t$. If for each $i \in [1,s]$ there is $m_i \in \N$ such that $m_i \boldsymbol e_i \in \mathfrak a$, then $H = \mathfrak a \cup \{\boldsymbol 0\}$ is a conductor submonoid and a generalized numerical monoid.

2. A monoid $H$ is called {\it finitely primary} if there exist $s, \alpha \in \N$ such that $H$ is the submonoid of a factorial monoid $F = F^{\times} \times \mathcal F ( \{p_1, \ldots, p_s\})$ satisfying
\[
H \setminus H^{\times} \subset p_1 \cdot \ldots \cdot p_sF \quad \text{and} \quad (p_1 \cdot \ldots \cdot p_s)^{\alpha}F \subset H \,.
\]
Finitely primary monoids need not be Mori (see \cite{HK-Ha-Ka04}), but they are primary whence they are \G-monoids. Clearly, numerical monoids are finitely primary of rank one. We discuss two special situations.

(i) If $H$ is as above, then its seminormalization $H'$ is a C-monoid and it has the form
\[
H' = p_1 \cdot \ldots \cdot p_sF \cup H'^{\times}
\] 
(\cite[Lemma 3.4]{Ge-Ka-Re15a}). Clearly, $H'$ is a conductor submonoid of $F$. If $s=1$, then $\mathsf c (H') \le 2$ and $H'$ is half-factorial (\cite[Lemma 3.6]{Ge-Ka-Re15a}), whence there are half-factorial conductor submonoids.

(ii) If $H$ has  the form $H = (p_1^{\alpha_1} \cdot \ldots \cdot p_s^{\alpha_s})F \cup H^{\times}$, then it  is also a conductor submonoid of $F$. However, 
if $s \ge 2$ and all $\alpha_i \ge 2$, then $H$ is neither seminormal nor finitely generated.

\smallskip
3. We already observed that conductor subrings of factorial domains are local domains. We analyze the assumptions of Theorem \ref{3.8} for domains. Suppose that $D$ is Mori with $(D \DP \widehat D) \ne \{0\}$.  Then $D$ is one-dimensional semilocal if and only if $D$ is a G-domain (\cite[Proposition 2.10.7]{Ge-HK06a}) and if this holds, then $\widehat D$ is a semilocal principal ideal domain. 

If $D$ is a one-dimensional local Mori domain with nonzero conductor $(D \DP \widehat D) \ne \{0\}$, then $D^{\bullet}$ is finitely primary and its rank equals the number of maximal ideals of $\widehat D$ (\cite[Proposition 2.10.7]{Ge-HK06a}). Thus, Part 2 provides a variety of examples of conductor subrings. In particular, $D^{\bullet}$ is seminormal finitely primary (and hence a conductor monoid) if and only if $D$ is a seminormal one-dimensional local Mori domain (\cite[Lemma 3.4]{Ge-Ka-Re15a}).

Faber \cite[Proposition 4.2]{Fa20a} studied reduced one-dimensional local noetherian rings $(D, \mathfrak m)$. In case of domains her characterization states that $D$ is a conductor domain if and only if $D$ is $1$-step normal (i.e., $\widehat  D = \End_D ( \mathfrak m)$).

To give an explicit example, we consider power series rings. Let  $K \subsetneq L$ be a field extension,
let $D=K+X^nL[[X]]$ with $n \in \N$, let  $H=D^\bullet$, $\widehat D = L[[X]]$, and $F=\widehat D^\bullet$. Then  $H\setminus H^\times=(X^nL[[X]])^\bullet$ is an  $s$-ideal of the factorial monoid $F$, whence $D$ is a conductor subring of $\widehat D$.
\end{example}

\smallskip
Next we construct, for every $n \in \N$, a local noetherian conductor domain with Krull dimension $n$. 
This  requires some preliminaries.  Given two ring homomorphisms $f:A\to C$ and $g:B\to C$, the fiber product of $f$ and $g$ (in the category of rings) is the subring
$$
f\times_Cg:=\{(a,b)\in A\times B\colon f(a)=g(b)\}
$$
of the direct product $A\times B$.  This kind of construction is usually very helpful to provide examples of rings with prescribed prime spectrum or, more generally, with certain ideal-theoretic properties.  If $\lambda:R\to S$ is a ring homomorphism, then $\lambda^\star:\spec(S)\to\spec(R)$ is the canonical continuous map (with respect to the Zariski topology) induced by $\lambda$, defined by $\lambda^\star(\f q) =\lambda^{-1}(\f q)$ for every $\f q\in\spec(S)$. The  closed subspaces of $\spec(R)$, with respect to the Zariski topology, are the sets of the type 
\[
\mathcal V_R(\f a):=\{\f p\in\spec(R)\colon \f p \supset \f a \},
\]
for every ideal $\f a$ of $R$. The following lemma summarizes  some basic facts about the prime ideal structure of a fiber product (see \cite[Theorem 1.4 and its proof]{Fo80}).

\smallskip
\begin{lemma} \label{3.10} 
Let $f$ and $g$ be as above, suppose that $g$ is surjective, and let  $D:=f\times_Cg$ denote their fiber product. 
\begin{enumerate}
\item Let $p:D\to A$ be the restriction to $D$ of the projection $A\times B\to A$. Then $p^\star$ is a closed embedding inducing a homeomorphism of $\spec(A)$ onto $\mathcal V_D(\Ker(p))$. 

\item Let $q:D\to B$ be the restriction to $D$ of the projection $A\times B\to B$. Then $q^\star$ restricts to a homeomorphism  $\spec(B)\setminus \mathcal V_B(\Ker(g))\to \spec(D)\setminus \mathcal V_D(\Ker(p))$.  
\end{enumerate}
In particular, $\spec(D)$ is the disjoint union of $p^\star \big(\spec(A) \big)$ and of $q^\star \big(\spec(B)\setminus \mathcal V_B(\Ker(g)) \big)$. 
\end{lemma}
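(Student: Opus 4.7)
The plan is to reduce Part 1 to the standard behavior of surjective ring homomorphisms and to handle Part 2 via a single localization isomorphism induced by $q$; the ``disjoint union'' conclusion will then follow at once.

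For Part 1, one first computes $\Ker(p) = \{(0,b) \in D \colon g(b) = f(0) = 0\} = \{0\} \times \Ker(g)$. Since $g$ is surjective, so is $p$: for $a \in A$ pick $b \in B$ with $g(b) = f(a)$; then $(a,b) \in D$ and $p(a,b) = a$. Hence $p$ induces an isomorphism $D/\Ker(p) \cong A$, and the standard bijection between primes of $D/\Ker(p)$ and primes of $D$ containing $\Ker(p)$ immediately yields that $p^\star$ is a closed embedding onto $\mathcal V_D(\Ker(p))$.

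For Part 2, the crucial step---and what I expect to be the main obstacle---is to show that for every $b_0 \in \Ker(g)$, the map $q \colon D \to B$ induces a ring isomorphism
\[
\widetilde q \colon D\bigl[(0,b_0)^{-1}\bigr] \xrightarrow{\sim} B[b_0^{-1}] \,.
\]
Its existence is clear since $q(0,b_0) = b_0$ becomes a unit in $B[b_0^{-1}]$. For surjectivity, any $b/b_0^n$ equals the image of $(0, b b_0)/(0, b_0)^{n+1}$, noting that $(0, b b_0) \in D$ because $g(b b_0) = g(b) g(b_0) = 0$. Injectivity is the delicate point: if $b/b_0^n = 0$ in $B[b_0^{-1}]$, then $b_0^m b = 0$ in $B$ for some $m$; multiplying $(a,b)$ by $(0, b_0)^{m+1}$ in $D$ produces $(0, b_0 \cdot b_0^m b) = (0,0)$, which kills the $A$-coordinate automatically and forces $(a,b)/(0,b_0)^n = 0$ in the localization.

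With this isomorphism in hand, $q^\star$ restricts to a homeomorphism between the basic open sets $\{\mathfrak P \in \spec(D) \colon (0,b_0) \notin \mathfrak P\}$ and $\{\mathfrak q \in \spec(B) \colon b_0 \notin \mathfrak q\}$. Taking the union over $b_0 \in \Ker(g)$ exactly covers $\spec(D) \setminus \mathcal V_D(\Ker(p))$ and $\spec(B) \setminus \mathcal V_B(\Ker(g))$ respectively, and since these partial homeomorphisms are all restrictions of the same continuous map $q^\star$, they glue into the global homeomorphism claimed in Part 2. The ``disjoint union'' conclusion is then immediate: $\spec(D) = \mathcal V_D(\Ker(p)) \sqcup \bigl(\spec(D) \setminus \mathcal V_D(\Ker(p))\bigr) = p^\star\bigl(\spec(A)\bigr) \sqcup q^\star\bigl(\spec(B) \setminus \mathcal V_B(\Ker(g))\bigr)$.
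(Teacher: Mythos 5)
Your argument is correct. Note first that the paper does not prove Lemma \ref{3.10} at all: it is stated as a summary of known facts with a pointer to \cite[Theorem 1.4 and its proof]{Fo80}, so any complete argument here is "different from the paper" by default. What you supply is a clean, self-contained proof. Part 1 is the standard fact for the surjection $p$ (and your computation $\Ker(p)=\{0\}\times\Ker(g)$, which rests on the surjectivity of $g$ to get surjectivity of $p$, is exactly what is needed). Your Part 2 is the genuinely nontrivial contribution: the localization isomorphism $D[(0,b_0)^{-1}]\xrightarrow{\sim}B[b_0^{-1}]$ for $b_0\in\Ker(g)$ is the right key lemma, and your injectivity argument is sound because multiplying $(a,b)$ by $(0,b_0)^{m+1}$ annihilates the $A$-coordinate for free, so $b_0^{m}b=0$ in $B$ already forces $(a,b)$ to die in $D[(0,b_0)^{-1}]$. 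The only step you pass over quickly is the gluing: restrictions of a single continuous map that are homeomorphisms on an open cover need not be globally injective on the union (local homeomorphisms are not injective in general). Here it works because membership of $\mathfrak q$ in the basic open set $D(b_0)$ is detected by $q^\star(\mathfrak q)$ itself: $b_0\notin\mathfrak q$ if and only if $(0,b_0)\notin q^{-1}(\mathfrak q)$, so two primes with the same image under $q^\star$ lie in a common $D(b_0)$, where injectivity is already known. With that one sentence added, the proof is complete, and the disjoint-union conclusion follows exactly as you say.
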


\smallskip
\begin{example}~ \label{3.11}
Let $K \subsetneq L$ be a finite field extension and let $L[T_1, \ldots, T_n]$ be the polynomial ring with $n \ge 1$ variables. Then $\mathfrak n= \langle T_1, \ldots, T_n\rangle$ is a maximal ideal of $L[T_1, \ldots, T_n]$, and we consider the localization $F = L[T_1, \ldots, T_n]_{\mathfrak n}$, which is a local factorial domain with maximal ideal $\mathfrak m = \mathfrak n_{\mathfrak n}$. Then the residue field of $F$ is isomorphic to $L$ and we consider the canonical projection
\[
p \colon F \to L \,, \quad \text{defined by} \quad \frac{f}{g} \mapsto  \frac{f(0, \ldots, 0)}{g (0, \ldots, , 0)} \,.
\]
We assert that 
\[
D := p^{-1} (K) \subset F
\] 
is a conductor domain which is local noetherian and has Krull dimension $\dim (D)=n$.

To begin with, note that $D$ is canonically isomorphic to the fiber product of the inclusion $i:K\hookrightarrow L$ and of the projection $p$ (indeed,  the map $j:D\to i\times_Lp$, $\eta\mapsto (p(\eta),\eta)$ is an isomorphism). By Lemma \ref{3.10}, the following facts are straightforward. 
\begin{enumerate}
	\item[(i)] If $p_0:D\to K$ is the restriction of $p$ to $D$,  then $p_0^\star$ is the trivial closed embedding sending $(0)\in\spec(K)$ to $\f m$. In particular, $\f m$ is a maximal ideal of $D$. Since then $\f m$ is a common ideal of $D$ and $F$ and $D\neq F$ (because $K$ is a proper subfield of $L$), it immediately follows that $\f m$ is the conductor of the ring extension $D\subset F$. Thus, $D$ is a conductor subring of $F$.

\item[(ii)] Let $\iota \colon D \hookrightarrow F$ be the inclusion. Then $\iota^\star$ restricts to a homeomorphism of 
      \[
      \spec(F)\setminus \mathcal V_F(\f m)=\{\f h\in\spec(F)\colon \f h\subsetneq \f m \}
      \]
	  onto the subspace $\spec(D)\setminus \mathcal V_D(\f m)$. Since $\iota$ is the inclusion,  $\iota^\star$ is the contraction map. On the other hand, if $\f h\in \spec(F)\setminus \mathcal V_F(\f m)$, we have obviously $\f h\subsetneq D$, since we already know that $\f m\subset D$, and thus $\iota^\star(\f h)=\f h\cap D=\f h$, that is, $\iota^\star$ is the identity. It immediately follows that $\spec(D)$ and $\spec(F)$ are set theoretically equal whence, in particular, $\dim(D)=\dim(F)=n$.

\item[(iii)]  Since $K\subsetneq L$ is a finite field extension,  \cite[Corollary 1.5(4)]{Fo80}  implies that $D\subset F$ is a finite extension of rings. Since $F$ is noetherian, the Theorem of Eakin-Nagata implies that $D$ is noetherian. 
\end{enumerate}    
\end{example}

\medskip
\section{Conductor submonoids associated with Krull monoids} \label{4}
\medskip

In this section we study a class of conductor submonoids which was first considered by Baeth \cite{Ba23a}. A focus of his research was on arithmetic invariants (such as the $\rho_k (\cdot )$ invariants, the set of distances, and the $\omega (\cdot )$ invariants) of the monoid $F_{\iota} (G)$ (see Definition \ref{4.2}). Part of the former results are refined by Theorem \ref{5.1} and new results are given in Theorem \ref{5.5}.

A monoid homomorphism $\theta \colon H \to B$ is said to be a {\it transfer homomorphism} if the following two conditions are satisfied.
\begin{enumerate}
\item[{\bf (T1)\,}] $B = \theta(H) B^\times$  and  $\theta^{-1} (B^\times) = H^\times$.

\item[{\bf (T2)\,}] If $u \in H$, \ $b,\,c \in B$  and  $\theta (u) = bc$, then there exist \ $v,\,w \in H$ such that  $u = vw$, \  $\theta (v) \in bB^{\times}$, and  $\theta (w) \in c B^{\times}$.
\end{enumerate}
Every transfer homomorphism $\theta \colon H \to B$ between atomic monoids $H$ and $B$ gives rise to a unique homomorphism $\overline \theta \colon \mathsf Z (H) \to \mathsf Z (B)$ satisfying $\overline \theta (uH^{\times}) = \theta (u) B^{\times}$ for all $u \in \mathcal A (H)$ (see \cite[Proposition 3.2.3]{Ge-HK06a}).

For $a \in H$, we denote by  $\mathsf c (a, \theta)$  the
smallest $N \in \N_0 \cup \{\infty\}$ with the following property:

\smallskip

\begin{enumerate}
\item[]
If $z,\, z' \in \mathsf Z_H (a)$ and $\overline \theta (z) =
\overline \theta (z')$, then there exist some $k \in \N_0$ and factorizations $z=z_0,
\ldots, z_k=z' \in \mathsf Z_H (a)$ such that \ $\overline \theta (z_i) = \overline \theta (z)$ and \ $\mathsf d (z_{i-1}, z_i) \le N$ for all $i \in [1,k]$ \ (that is,
$z$ and $z'$ can be concatenated by an $N$-chain in the fiber
\ $\mathsf Z_H (a) \cap \overline \theta ^{-1} (\overline \theta
(z)$)\,).
\end{enumerate}
Then 
\[
\mathsf c (H, \theta) = \sup \{\mathsf c (a, \theta) \colon a \in H \} \in \N_0 \cup \{\infty\}
\]
denotes the {\it catenary degree in the fibres}.

A transfer homomorphism  allows to pull back  arithmetic properties from $B$ to $H$. We gather two key properties in the next lemma (see \cite[Theorem 3.2.5 and Lemma 3.2.6]{Ge-HK06a}).

\smallskip
\begin{lemma} \label{4.1}
Let $\theta \colon H \to B$ be a transfer homomorphism of atomic monoids and let $a \in H$.
\begin{enumerate}
\item $\mathsf L_H (a) = \mathsf L_B \big( \theta (a) \big)$ and $\mathcal L (H) = \mathcal L (B)$.

\item $\mathsf c \bigl(\theta(a)\bigr) \le \mathsf c(a) \le \max \{\mathsf c\bigl( \theta(a) \bigr),\,
      \mathsf c(a,\theta)\}$,  $\mathsf c(B) \le \mathsf c(H) \le \max \{\mathsf c(B), \mathsf c(H, \theta)\}$, and  \newline $\mathsf c_{\mon} (B) \le \mathsf c_{\mon} (H) \le \max \{\mathsf c_{\mon} (B), \mathsf c(H, \theta)\}$.
\end{enumerate}
\end{lemma}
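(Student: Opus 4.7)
The plan is to exploit the induced factorization homomorphism $\overline\theta \colon \mathsf Z(H) \to \mathsf Z(B)$ together with the two defining properties \textbf{(T1)} and \textbf{(T2)} of transfer homomorphisms. Property \textbf{(T1)} ensures that every atom of $B$ agrees up to a unit with some $\theta(u)$ for $u \in \mathcal A(H)$, whereas \textbf{(T2)} allows one to lift each factorization of $\theta(a)$ in $B$, and (more delicately) each individual step of a factorization chain, to a corresponding object in $H$. A key elementary tool used throughout is the distance estimate $\mathsf d(\overline\theta(x), \overline\theta(y)) \le \mathsf d(x, y)$, which follows because the common factor $\gcd(x, y)$ in $\mathsf Z(H)$ maps to a common factor of $\overline\theta(x)$ and $\overline\theta(y)$ in $\mathsf Z(B)$.

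For part 1, I would prove two inclusions. Since $\overline\theta$ preserves length, the inclusion $\overline\theta(\mathsf Z_H(a)) \subseteq \mathsf Z_B(\theta(a))$ immediately gives $\mathsf L_H(a) \subseteq \mathsf L_B(\theta(a))$. For the reverse inclusion, given $w = v_1 \cdots v_\ell \in \mathsf Z_B(\theta(a))$, iterate \textbf{(T2)}: split $a = u_1 a_1$ with $\theta(u_1) \in v_1 B^\times$, then $a_1 = u_2 a_2$ with $\theta(u_2) \in v_2 B^\times$, and so on. Each $u_i$ must be an atom of $H$, for if $u_i$ factored properly into non-units, then \textbf{(T1)} would force $\theta(u_i)$ to factor properly as a product of non-units in $B$, contradicting that $v_i$ is an atom. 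This produces a factorization of $a$ in $H$ of length $\ell$. The identity $\mathcal L(H) = \mathcal L(B)$ then follows from \textbf{(T1)}, which gives that every element of $B$ is of the form $\theta(a) b^\times$ for some $a \in H$ and $b^\times \in B^\times$.

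For part 2, the inequality $\mathsf c(\theta(a)) \le \mathsf c(a)$ is straightforward: given $\bar z, \bar z' \in \mathsf Z_B(\theta(a))$, lift them by part 1 to $z, z' \in \mathsf Z_H(a)$, concatenate them by an $\mathsf c(a)$-chain in $H$, and apply $\overline\theta$; the resulting chain is a valid $\mathsf c(a)$-chain in $\mathsf Z_B(\theta(a))$ thanks to the distance estimate. For the upper bound $\mathsf c(a) \le N := \max\{\mathsf c(\theta(a)), \mathsf c(a, \theta)\}$, take $z, z' \in \mathsf Z_H(a)$ and let $\overline\theta(z) = \bar z_0, \bar z_1, \ldots, \bar z_k = \overline\theta(z')$ be an $\mathsf c(\theta(a))$-chain in $\mathsf Z_B(\theta(a))$. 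Lift this chain inductively: given $y_{i-1} \in \mathsf Z_H(a)$ with $\overline\theta(y_{i-1}) = \bar z_{i-1}$, write $\bar z_{i-1} = \bar\gamma \bar\alpha$ and $\bar z_i = \bar\gamma \bar\beta$ with $\bar\gamma = \gcd(\bar z_{i-1}, \bar z_i)$, use \textbf{(T2)} to decompose $y_{i-1} = \gamma \alpha$ with $\overline\theta(\gamma) = \bar\gamma$ and $\overline\theta(\alpha) = \bar\alpha$, and apply part 1 to $\pi(\alpha) \in H$ (noting that $\theta(\pi(\alpha)) = \pi(\bar\alpha) = \pi(\bar\beta)$ in $B_{\red}$) to obtain $\beta \in \mathsf Z_H(\pi(\alpha))$ with $\overline\theta(\beta) = \bar\beta$. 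Then $y_i := \gamma \beta$ lies above $\bar z_i$ and $\mathsf d(y_{i-1}, y_i) = \mathsf d(\bar z_{i-1}, \bar z_i)$. Finally, $z$ and $y_0$ lie in the same fibre of $\overline\theta$, as do $y_k$ and $z'$, so they can be concatenated by $\mathsf c(a, \theta)$-chains; splicing these three chains yields an $N$-chain from $z$ to $z'$.

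The monotone catenary degree statement follows from the same argument, since both the factorization lifting of part 1 and the step-by-step lifting of chains preserve lengths, so a monotone chain in $B$ lifts to a monotone chain in $H$ and the image of a monotone chain in $H$ is monotone. The main obstacle is the step-by-step chain lifting within a fixed fibre: one must perform the \textbf{(T2)}-decomposition precisely along the common factor $\bar\gamma = \gcd(\bar z_{i-1}, \bar z_i)$ and then lift only the complementary piece via part 1. Any other decomposition would produce lifted factorizations whose distance in $H$ exceeds the distance in $B$, causing the catenary estimate to collapse.
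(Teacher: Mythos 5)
Your proposal is correct and follows the standard argument for transfer homomorphisms: the paper does not prove this lemma itself but cites \cite[Theorem 3.2.5 and Lemma 3.2.6]{Ge-HK06a}, and your write-up is essentially a faithful reconstruction of that cited proof (length-preserving lifting of factorizations via {\bf (T2)}, step-by-step lifting of chains along the $\gcd$, and splicing with fibre chains). The only cosmetic imprecision is that the decomposition $y_{i-1}=\gamma\alpha$ takes place in the free abelian monoid $\mathsf Z(H)$ and so needs only that $\overline\theta$ maps atoms to atoms, not {\bf (T2)} itself.
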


\smallskip
A monoid homomorphism $\varphi \colon H \to F = \mathcal F (P)$ is said to be a
\begin{itemize}
\item {\it divisor homomorphism} if, for all $a, b \in H$ we have $a \mid b$ if and only if $\varphi (a) \mid \varphi (b)$, and

\item {\it divisor theory} if it is a divisor homomorphism and, for all $\alpha \in F$, there is a finite nonempty set $A \subset H$ such that $\alpha = \gcd \varphi (A)$. 
\end{itemize}
A monoid is Krull if and only if it has a divisor theory (equivalently, a divisor homomorphism to a free abelian monoid), and its divisor theory gives rise to a canonical transfer homomorphism. In order to describe it, we introduce  monoids of zero-sum sequences.

\smallskip
Let $G$ be an additive abelian group and let $G_0 \subset G$ be a subset. The elements of the (multiplicatively written) free abelian monoid $\mathcal F (G_0)$ with basis $G_0$ are called {\it sequences} over $G_0$. If
\[
S = g_1 \cdot \ldots \cdot g_{\ell} = \prod_{g \in G_0} g^{\mathsf v_g (S)} \ \in \mathcal F (G_0) \,,
\]
with $\ell \in \N_0$ and $g_1, \ldots, g_{\ell} \in G_0$, is a sequence over $G_0$, then
\begin{itemize}
\item $|S|=\ell = \sum_{g \in G_0} \mathsf v_g (S) \in \N_0$ is the {\it length} of $S$, and

\item $\sigma (S) = g_1 + \ldots + g_{\ell} = \sum_{g \in G_0} \mathsf v_g (S) g \in G$ is the {\it sum} of $S$.
\end{itemize}
We say that $S$ is {\it zero-sum free} if $\sum_{i \in I} g_i \ne 0$ for all $\emptyset \ne I \subset [1, \ell]$.
Then
\[
\mathcal B (G_0) = \{S\in \mathcal F (G_0) \colon \sigma (S) = 0 \} \subset \mathcal F (G_0)
\]
denotes the {\it monoid of zero-sum sequences} over $G_0$. Since the inclusion  $\iota \colon \mathcal B (G_0) \hookrightarrow \mathcal F (G_0)$ is a divisor homomorphism (but not necessarily a divisor theory), $\mathcal B (G_0)$ is a Krull monoid, and
\[
\mathsf D (G_0) = \sup \{ |S| \colon S \ \text{is an atom in} \ \mathcal B (G_0) \} \in \N_0 \cup \{\infty\}
\]
denotes the {\it Davenport constant} of $G_0$.

\smallskip
Now, let $\varphi \colon H \to F = \mathcal F (P)$ be a divisor theory
with class group $G$ and let $G_0 \subset G$ denote the set of classes containing prime divisors. Consider the following diagram
\[
\begin{CD}
H  @>{\varphi}>>    \mathcal F (P) \\
 @V{\boldsymbol \beta} VV                           @VV{\widetilde{\boldsymbol \beta}}  V\\
\mathcal B (G_0)           @>{\iota}>>         \mathcal F (G_0) \,,
\end{CD}
\]
where 
\begin{itemize}
\item $\widetilde{\boldsymbol \beta} \colon \mathcal F (P) \to \mathcal F (G_0)$ is the unique epimorphism mapping each $p \in P$ onto its class $[p] \in G_0$, and

\item $\boldsymbol \beta = \widetilde{\boldsymbol \beta} \circ \varphi \colon H \to \mathcal B (G_0)$.
\end{itemize}    
Then $\widetilde{\boldsymbol \beta}$ and $\boldsymbol \beta $ are transfer homomorphisms by \cite[Theorem 3.4.10]{Ge-HK06a}.

\smallskip
\begin{definition} \label{4.2}
Let $\varphi \colon H \to F = \mathcal F (P)$ be a divisor theory
with class group $G$, let $G_0 \subset G$ denote the set of classes containing prime divisors, and let $\iota \colon \mathcal B (G_0) \hookrightarrow \mathcal F (G_0)$ denote the inclusion map. Then
\begin{itemize}
\item $F_{\varphi} = \mathcal F_{\varphi} (P) = \{ \alpha \in F \colon \text{there is some $a \in H \setminus H^{\times}$ with $\varphi (a)$ divides $\alpha$} \} \cup \{1\} = \varphi (H \setminus H^{\times}) F \cup \{1\}$, resp.

\item $\mathcal F_{\iota} (G_0) = \{ S \in \mathcal F (G_0) \colon S \ \text{is not zero-sum free} \} \cup \{1\} = (\mathcal B (G_0) \setminus \{1\})\mathcal F (G_0) \cup \{1\}$,
\end{itemize}
are called the {\it conductor submonoids of $F$ associated with $\varphi$} resp. {\it the conductor submonoid of $\mathcal F (G_0)$ associated with $\iota \colon \mathcal B (G_0) \hookrightarrow \mathcal F (G_0)$}.
\end{definition}

By definition, we have $\varphi (H) \subset F_{\varphi}$ and $\mathcal B (G_0) \subset \mathcal F_{\iota} (G_0)$, and the inclusions
\[
\varphi (H) \hookrightarrow  F_{\varphi} \quad \text{and} \quad \mathcal B (G_0) \hookrightarrow \mathcal F_{\iota} (G_0)
\]
are divisor homomorphisms. Furthermore, 
note that $H$ is factorial if and only if  $|G|=1$. If this holds, then  $F_{\varphi} = F$ is factorial and $F_{\iota} (G) = \mathcal B (G) \cong (\N_0, +)$ (the converse is also true as we will see in Theorem \ref{5.5}).

\smallskip
\begin{proposition} \label{4.3}
Let $H$ be a Krull monoid with divisor theory $\varphi \colon H \to F$ and let $\mathcal I_v^* (H)$ be the monoid of $v$-invertible $v$-ideals, equipped with $v$-multiplication. Then there is a monoid isomorphism
\[
\Phi \colon F_{\varphi} \longrightarrow \{ I \in \mathcal I_v^* (H) \colon I \ \text{is divisible by a proper principal ideal} \} \subset  \mathcal I_v^* (H) \,,
\]
whence $F_{\varphi}$ is $($isomorphic to$)$ a monoid of $v$-invertible $v$-ideals lying between the monoid of all prinicipal ideals and the monoid of all $v$-invertible $v$-ideals. In particular, $F_{\varphi}$ is $($up to canonical isomorphism$)$ uniquely determined by $H$.
\end{proposition}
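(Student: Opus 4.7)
The plan is to recognize that the divisor theory $\varphi \colon H \to F$ is, up to canonical isomorphism, the universal divisor theory $H \to \mathcal I_v^*(H)$, $a \mapsto aH$, of the Krull monoid $H$, and then simply restrict the resulting monoid isomorphism $F \to \mathcal I_v^*(H)$ to $F_{\varphi}$.

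First I would invoke the standard structure theorem for Krull monoids (\cite[Theorems 2.3.11 and 2.4.8]{Ge-HK06a}): the canonical map $H \to \mathcal I_v^*(H)$, $a \mapsto aH$, is a divisor theory, and any two divisor theories of the same monoid are isomorphic via a unique monoid isomorphism commuting with the maps out of $H$. Consequently there exists a canonical monoid isomorphism $\psi \colon F \to \mathcal I_v^*(H)$ characterized by $\psi(\varphi(a)) = aH$ for every $a \in H$. Since $\varphi$ is a divisor homomorphism, divisibility in the free monoid $F$ translates via $\psi$ into reverse containment of divisorial ideals: for $\alpha, \beta \in F$ one has $\alpha \mid \beta$ if and only if $\psi(\beta) \subset \psi(\alpha)$.

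Next I would set $\Phi := \psi|_{F_{\varphi}}$ and check that it is a bijection onto the prescribed subset. If $\alpha \in F_{\varphi} \setminus \{1\}$, then by Definition \ref{4.2} there is some $a \in H \setminus H^{\times}$ with $\varphi(a) \mid \alpha$ in $F$, so $\psi(\alpha) \subset aH$, meaning $\Phi(\alpha)$ is divisible by the proper principal ideal $aH$ in $\mathcal I_v^*(H)$; the image $\Phi(1) = H$ is the identity of $\mathcal I_v^*(H)$. Conversely, given $I \in \mathcal I_v^*(H)$ divisible by a proper principal ideal $aH$ with $a \in H \setminus H^{\times}$, the element $\alpha := \psi^{-1}(I) \in F$ satisfies $\varphi(a) \mid \alpha$, whence $\alpha \in F_{\varphi}$. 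Because $\psi$ is already a monoid isomorphism on the ambient $F$, its restriction $\Phi$ to $F_{\varphi}$ is automatically a monoid isomorphism onto its image.

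Finally, the sandwich claim is immediate: the image of $\Phi$ contains every principal ideal ($aH = \Phi(\varphi(a))$ for $a \in H \setminus H^{\times}$, and $H = \Phi(1)$ for units), and is contained in $\mathcal I_v^*(H)$ by construction; uniqueness of $F_{\varphi}$ up to canonical isomorphism then follows from the uniqueness up to canonical isomorphism of the divisor theory of $H$. I do not anticipate a serious obstacle here; the only bookkeeping subtlety is that the identity $1 \in F_{\varphi}$ must be treated separately, since its image $H$ is not itself divisible by a proper principal ideal and has to be included in the target of $\Phi$ as the identity to preserve the monoid structure.
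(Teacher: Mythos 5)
Your proposal is correct and follows essentially the same route as the paper: both identify $\partial \colon H \to \mathcal I_v^*(H)$, $a \mapsto aH$, as a divisor theory, invoke the uniqueness theorem for divisor theories to obtain the unique isomorphism $\Phi^* \colon F \to \mathcal I_v^*(H)$ with $\Phi^* \circ \varphi = \partial$, and restrict it to $F_{\varphi}$. You merely spell out in more detail the verification that the image of the restriction is exactly the set of ideals divisible by a proper principal ideal, which the paper leaves implicit.
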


\begin{proof}
Since $H$ is a Krull monoid, the monoid $\mathcal I_v^* (H)$ is free abelian with basis $\mathfrak X (H)$ and the map $\partial \colon H \to \mathcal I_v^* (H)$, defined by $\partial (a) = aH$ for all $a \in H$, is a divisor theory (\cite[Proposition 2.4.5]{Ge-HK06a}). By the Uniqueness Theorem for Divisor Theories (\cite[Theorem 2.4.7]{Ge-HK06a}), there is a unique isomorphism $\Phi^* \colon F \to \mathcal I_v^* (H)$ such that $\Phi^* \circ \varphi = \partial$. The restriction of $\Phi^*$ to $F_{\varphi}$ is the asserted isomorphism $\Phi$.
\end{proof}

For $i \in [1,2]$, let $H_i$ be a Krull monoid with divisor theory $\varphi_i \colon H_i \to F_i$. Then the monoid of principal ideals of $H_1$ and $H_2$ are isomorphic if and only if $H_1$ and $H_2$ have the same characteristic (roughly speaking, their class groups have to be isomorphic and the distribution of prime divisors in the classes has to be the same; for details see \cite[Theorem 2.5.4]{Ge-HK06a}). It is an open problem to characterize when $F_{\varphi_1}$ and $F_{\varphi_2}$ are isomorphic. We continue with an example.

\smallskip
\begin{example} \label{4.4}~

1. Let $D$ be a Dedekind domain. Then $\mathcal I_v^* (D) = \mathcal I^* (D)$ is the monoid of (usual) nonzero ring ideals with ideal multiplication and $\partial \colon D^{\bullet} \to \mathcal I^* (D) = F$, defined by $\partial (a) = aD$ for all $a \in D^{\bullet}$, is a divisor theory. For two nonzero ideals $I$ and $J$, we have $J \mid I$ if and only if $I \subset J$. Thus, for the conductor submonoid $F_{\partial}$ we have
\[
F_{\partial} = \{ I \in \mathcal I^* (D) \colon I \ \text{is contained in a proper principal ideal} \} \subset \mathcal I^* (D) \,.
\]

2. There are also non-commutative Krull monoids which give rise to conductor submonoids of factorial (commutative) monoids of ideals. Since non-commutative monoids are beyond the setting of the present article, we do not introduce  the required machinery but just give a brief sketch and associated references. Let $H$ be a normalizing Krull monoid. Then the monoid $\mathcal I_v^* (H)$ of $v$-invertible $v$-ideals is free abelian and we have a divisor theory $\partial \colon H \to \mathcal I_v^* (H)$ (\cite[Theorem 4.13]{Ge13a}). The theory  of non-commutative Dedekind and Krull rings is presented in \cite{Re03, Je-Ok07a, Mc-Ro01a}.
\end{example}

\smallskip
\begin{theorem} \label{4.5}
Let all notation be as in Definition \ref{4.2}.
The restriction $\boldsymbol \beta^* = \widetilde{\boldsymbol \beta}|_{F_{\varphi}} \colon F_{\varphi} \to \mathcal F_{\iota} (G_0)$ is a transfer homomorphism and for the catenary degree in the fibres we have $\mathsf c (F_{\varphi}, \boldsymbol \beta^*) \le 2$. 
\end{theorem}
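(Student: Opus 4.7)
The plan is to split the argument into two parts: first verifying the transfer homomorphism axioms for $\boldsymbol \beta^*$, and then bounding the catenary degree in the fibres by an explicit $2$-swap procedure on slots.

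For the transfer property, I would first check that $\boldsymbol \beta^*$ actually lands in $\mathcal F_\iota (G_0)$: any $\alpha \in F_{\varphi} \setminus \{1\}$ may be written as $\alpha = \varphi (a) \gamma$ with $a \in H \setminus H^{\times}$ and $\gamma \in F$, whence $\widetilde{\boldsymbol \beta}(\alpha) = \boldsymbol \beta (a) \widetilde{\boldsymbol \beta}(\gamma)$ has the nontrivial zero-sum divisor $\boldsymbol \beta (a) \in \mathcal B (G_0) \setminus \{1\}$. Since both $F_{\varphi}$ and $\mathcal F_\iota (G_0)$ are reduced, (T1) reduces to surjectivity: a target $T = BT' \in \mathcal F_\iota (G_0) \setminus \{1\}$ with $B \in \mathcal B (G_0) \setminus \{1\}$ and $T' \in \mathcal F (G_0)$ is lifted by choosing $a \in H \setminus H^{\times}$ with $\boldsymbol \beta (a) = B$ and $\gamma \in F$ with $\widetilde{\boldsymbol \beta}(\gamma) = T'$. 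For (T2), if $\boldsymbol \beta^*(\alpha) = S_1 S_2$ with both $S_i \neq 1$, I would apply (T2) for the ambient transfer homomorphism $\widetilde{\boldsymbol \beta}$ to obtain $\alpha = \beta_1 \beta_2$ in $F$ with $\widetilde{\boldsymbol \beta}(\beta_i) = S_i$, and then verify $\beta_i \in F_{\varphi}$ by lifting any nontrivial zero-sum subsequence of $S_i$ to a divisor of $\beta_i$ that lies in $\varphi (H \setminus H^{\times})$; the degenerate cases $S_1 = 1$ or $S_2 = 1$ are trivial.

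For the bound $\mathsf c (F_{\varphi}, \boldsymbol \beta^*) \le 2$, fix $a \in F_{\varphi}$ together with $z, z' \in \mathsf Z_{F_{\varphi}}(a)$ in the same fibre, and write $z = u_1 \cdots u_k$, $z' = u_1' \cdots u_k'$ with $\boldsymbol \beta^*(u_i) = \boldsymbol \beta^*(u_i') =: V_i$ after matching slots. I would proceed by induction on $k$; in the substantive case $u_1 \ne u_1'$, I run an inner induction on the invariant $\sum_{p \in P} |\mathsf v_p (u_1) - \mathsf v_p (u_1')|$, producing distance-$2$ moves in the fibre that drive this quantity to zero. Concretely, choose a prime $p$ with $\mathsf v_p (u_1) > \mathsf v_p (u_1')$; since $\widetilde{\boldsymbol \beta}(u_1) = \widetilde{\boldsymbol \beta}(u_1')$, pick a prime $q$ of the same class $[p]$ with $\mathsf v_q (u_1') > \mathsf v_q (u_1)$; from $\sum_i \mathsf v_q (u_i) = \sum_i \mathsf v_q (u_i') \ge \mathsf v_q (u_1') > \mathsf v_q (u_1)$ some slot $j \ge 2$ satisfies $q \mid u_j$. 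The replacement of $(u_1, u_j)$ by $(u_1 q p^{-1}, u_j p q^{-1})$ is meaningful in $F$ thanks to $p \mid u_1$ and $q \mid u_j$, leaves the $\boldsymbol \beta^*$-images $V_1, V_j$ unchanged, and produces---by atom preservation for transfer homomorphisms (Lemma \ref{4.1})---a new factorization $\tilde z \in \mathsf Z_{F_{\varphi}}(a)$ in the same fibre with $\mathsf d (z, \tilde z) \le 2$; a direct sign check shows that both the $p$- and $q$-contributions to the invariant decrease by exactly one, so the invariant drops by $2$ at each step. Once $u_1 = u_1'$, I would cancel it and apply the outer induction to $(u_2, \ldots, u_k)$ versus $(u_2', \ldots, u_k')$ as factorizations of $a / u_1 \in F_{\varphi}$.

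The main obstacle is arranging each single swap so that the invariant strictly decreases without having to control the auxiliary slot $j$; the key observation is that the sub-induction is unilateral, measuring only the slot-$1$ deviation against its target $u_1'$, so the choice of $q$ and $j$ is forced purely by the class-sum equality for $u_1, u_1'$ and by the global preservation of $q$-multiplicity across all slots. Everything else---stability of the swap inside $F_{\varphi}$, atom preservation through the transfer homomorphism, and the trivial base case $k \le 1$---is then routine bookkeeping.
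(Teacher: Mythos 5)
Your proposal is correct and follows essentially the same route as the paper: (T1)/(T2) are verified by lifting zero-sum subsequences through the divisor theory, and the fibre bound $\mathsf c (F_{\varphi}, \boldsymbol \beta^*) \le 2$ is obtained by repeatedly exchanging two primes of the same class between two slots, which is a distance-$2$ move staying in the fibre and preserving atoms. The only difference is bookkeeping: the paper tracks progress by maximizing the number of fixed points of a permutation matching the two prime decompositions, while you run a slot-by-slot induction with the $\ell^1$-deviation $\sum_{p}|\mathsf v_p(u_1)-\mathsf v_p(u_1')|$ as the decreasing invariant; both terminate for the same reason.
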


\begin{proof}
Let all notation be as above. In particular, we have $F_{\varphi} \subset \mathcal F (P)$.
By construction, $\boldsymbol \beta^*$ is a monoid epimorphism from $F_{\varphi}$ to $\mathcal F_{\iota} (G_0)$, which satisfies {\bf (T1)}. In order to verify {\bf (T2)}, let $u \in F_{\varphi}$, $B, C \in \mathcal F_{\iota} (G_0)\setminus\{1\}$ with $\boldsymbol \beta^* (u)= BC$. Then we may write $u$ in the form $u = p_1 \cdot \ldots \cdot p_k q_1 \cdot \ldots \cdot q_{\ell}$, with $p_1, \ldots, p_k, q_1, \ldots, q_{\ell} \in P$ such that $B = [p_1] \cdot \ldots \cdot [p_k]$, and $C = [q_1] \cdot \ldots \cdot [q_{\ell}]$. Since $B, C \in \mathcal F_{\iota} (G_0)$, after renumbering if necessary, we may assume that  there are $k' \in [1,k]$ and $\ell' \in [1, \ell]$ such that $B' = [p_1] \cdot \ldots \cdot [p_{k'}]$ and $C' =  [q_1] \cdot \ldots \cdot [q_{\ell'}]$ are zero-sum sequences. Since $\boldsymbol \beta \colon H \to \mathcal B (G_0)$ is a transfer homomorphism, we infer that $v' = p_1 \cdot \ldots \cdot p_{k'} \in \varphi (H \setminus H^{\times})$ and $w' = q_1 \cdot \ldots \cdot q_{\ell'} \in \varphi (H \setminus H^{\times})$. Thus, $v = v'p_{k'+1} \cdot \ldots \cdot p_k \in F_{\varphi}$, $w = w'q_{\ell'+1} \cdot \ldots \cdot q_{\ell} \in F_{\varphi}$, $\boldsymbol \beta^* (v)=B$, $\boldsymbol \beta^* (w)=C$, and $u=vw$. Thus, {\bf (T2)} holds.

\smallskip
In order to verify that $\mathsf c (F_{\varphi}, {\boldsymbol \beta^*}) \le 2$, let $a = p_1 \cdot \ldots \cdot p_{\ell} \in F_{\varphi}$, where $\ell \in \N$ and $p_1, \ldots, p_{\ell} \in P$,  and  $z, z' \in \mathsf Z (a)$ with $\overline{{\boldsymbol \beta^*}} (z) = \overline{{\boldsymbol \beta^*}} (z')$. We have to show that there is a $2$-chain $z=z_0, z_1, \ldots, z_k=z' \in \mathsf Z (a)$ between $z$ and $z'$ with $\overline{{\boldsymbol \beta^*}} (z) = \overline{{\boldsymbol \beta^*}} (z_i)$ for all $i \in [1,k]$.
Let $z = u_1 \cdot \ldots \cdot u_m$ and $z' = u_1' \cdot \ldots \cdot u_n'$ with $u_1,\ldots,u_m,u'_1,\ldots,u'_n\in \mathcal A(F_{\varphi})$. Since $\overline{{\boldsymbol \beta^*}} (z) = \overline{{\boldsymbol \beta^*}} (z')$, it  follows that $m=n$ and, after renumbering if necessary, that ${{\boldsymbol \beta^*}} (u_i)={{\boldsymbol \beta^*}} (u'_i)$ for all $i\in [1,m]$. In particular, the lengths of these sequences are equal:  $s_i:=|{{\boldsymbol \beta^*}} (u_i)|=|{{\boldsymbol \beta^*}} (u'_i)|$  for all $i\in [1,m]$. Let $\ell=\sum_{i=1}^{m}|{{\boldsymbol \beta^*}}(u_i)|=\sum_{i=1}^{m}|{{\boldsymbol \beta^*}}(u'_i)|$, and let $I_j=[s_0+\ldots+s_{j-1}+1,s_0+\ldots+s_{j-1}+s_j]$ for $j\in [1,m]$, where $s_0:=0$, so that $[1,\ell]=I_1\sqcup I_2\sqcup \ldots\sqcup I_m$ is a disjoint union of $[1,\ell]$ into $m$ consecutive intervals with lengths $|I_j|=s_j$ for $i\in [1,m]$. Then
\begin{align*}&a=u_1\cdot\ldots\cdot u_m=u'_1\cdot\ldots\cdot u'_m=p_1\cdot\ldots\cdot p_\ell\quad \mbox{ with }\\
&u_j=\prod_{i\in I_j} p_i\;\und\; u_j'=\prod_{i\in I_j}p_{\alpha(i)}\quad \mbox{ for $j\in [1,m]$},\end{align*} where $p_1,\ldots,p_\ell\in P$ and $\alpha$ is some permutation of $[1,\ell]$.
Since $\prod_{i\in I_j} [p_i]={{\boldsymbol \beta^*}}(u_j)={{\boldsymbol \beta^*}}(u'_j)=\prod_{i\in I_j} [p_{\alpha (i)}]$ for all $j\in [1,m]$, we can w.l.o.g. assume $$[p_i]=[p_{\alpha(i)}]\quad\mbox{ for all $i\in[1,\ell]$}.$$
Moreover, we can assume the permutation $\alpha$ is chosen, subject to prior constraints, so as  to maximize the number of $i\in [1,\ell]$ with $\alpha(i)=i$.
If $\alpha(i)=i$ for all $i\in [1,\ell]$, then $u_j=u'_j$ for all $j \in [1,m]$, whence $z=z'$ with there nothing to show. Therefore there must be some $s\in [1,\ell]$ with $\alpha(s)\neq s$, say with $s\in I_{j_1}$ and $\alpha(s)\in I_{j_2}$ where $j_1,\,j_2\in [1,m]$.
Then $$[p_{\alpha(s)}]=[p_s]=[p_{\alpha^{-1}(s)}]$$ with $\alpha^{-1}(s)\neq s$.
If $j_1=j_2$, then we can modify the permutation $\alpha$ by sending $s\mapsto \alpha^2(s)$ and $\alpha(s)\mapsto \alpha(s)$ (rather than $s\mapsto \alpha(s)$ and $\alpha(s)\mapsto \alpha^2(s)$), with $i\mapsto \alpha(i)$ for all $i\notin \{s,\alpha(s)\}$, and this new permutation will contradict the maximality of $\alpha$. Therefore $j_1\neq j_2$.
 Take the factorization $z=u_1\cdot\ldots\cdot u_m$ and replace $u_{j_1}$ by  $v_{j_1}:=u_{j_1} p_s^{-1} p_{\alpha(s)}$ and replace $u_{j_2}$ by $v_{j_2}:=u_{j_2} p_{\alpha(s)}^{-1} p_s$, leaving all other atoms alone, $v_{j}:=u_j$ for all $j\notin \{j_1,j_2\}$.
 Since $[p_s]=[p_{\alpha(s)}]$, we have $v_{j_1}, v_{j_2}\in F_{\varphi}$.
 By construction,  ${{\boldsymbol \beta^*}}(v_j)={{\boldsymbol \beta^*}}(u_j)$ for all $j\in[1,m]$. Consequently, since $v_{j_1},v_{j_2}\in F_{\varphi}$,  it follows  that $v_{j_1},v_{j_2}\in \mathcal A(F_{\varphi})$.
 Then $z_1=v_1\cdot\ldots\cdot v_m$ is another factorization of $a$  with $\overline{{\boldsymbol \beta^*}} (z)=\overline{{\boldsymbol \beta^*}} (z_1)$.  Since $z_1$ was constructed by replacing two atoms in the factorization $z$ by two new atoms, we have $\mathsf d(z,z_1)\leq 2$. Moreover, defining the permutation $\alpha_1$ for $z_1$ analogous to how we defined $\alpha$ for $z$, we find that the number of $i\in[1,\ell]$ with $\alpha(i)=i$ has increased by at least one (since $\alpha_1(i)=i$ for any $i\in[1,\ell]$ with $\alpha(i)=i$, while also $\alpha_1(s)=s$). Thus, iterating this process, we construct the desired $2$-chain $z=z_0, z_1, \ldots, z_k=z' \in \mathsf Z (a)$ between $z$ and $z'$ after a finite number of steps.
\end{proof}

Let $G$ and $G_0$ be as in Definition \ref{4.2}. 
If $G$ is finite, then  $\mathcal F (G) \setminus \mathcal F_{\iota} (G)$ is finite. Thus, in the terminology of \cite{Ba23a}, $\mathcal F_{\iota} (G)$ is a complement-finite ideal of $\mathcal F (G)$. Our next result reveals the algebraic structure of $F_{\iota} (G)$ in case of finite groups.

\smallskip
\begin{proposition} \label{4.6}
Let $G$ be an  abelian group and let $G_0 \subset G$ be a nonempty subset. 
\begin{enumerate}
\item If $G_0$ is a finite set of torsion elements, then the following statements hold.
      \begin{enumerate}
      \item[(a)] $F_{\iota} (G_0)$ is finitely generated.

      \item[(b)] $s$-$\spec \big(F_{\iota} (G_0) \big)$ is finite.

      \item[(c)] $F_{\iota} (G_0)$ is a $G$-monoid.

      \item[(d)] $F_{\iota} (G_0)$ is a \C-monoid in $\mathcal F (G_0)$.
      \end{enumerate}

\smallskip
\item The following statements are equivalent.
      \begin{enumerate}
      \item[(a)] $F_{\iota} (G)$ is finitely generated.

      \item[(b)] $s$-$\spec \big(F_{\iota} (G) \big)$ is finite.

      \item[(c)] $F_{\iota} (G)$ is a $G$-monoid.

      \item[(d)] $F_{\iota} (G)$ is a \C-monoid in $\mathcal F (G)$.

      \item[(e)] $G$ is finite.
      \end{enumerate}
\end{enumerate}
\end{proposition}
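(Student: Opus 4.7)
My plan for Part~1 is to feed the finite generation of $\mathcal F(G_0)_{\red}=\mathcal F(G_0)$ into Theorem~\ref{3.5}, which delivers (b), (c) and (d) at once, and then to prove (a) by a Davenport-constant argument. Since $\langle G_0\rangle$ is a finitely generated torsion abelian group, it is finite, so $d:=\mathsf D(G_0)\le \mathsf D(\langle G_0\rangle)<\infty$. If $a\in F_\iota(G_0)$ had length $|a|_F\ge 2d$, pick a zero-sum subsequence $B$ of $a$ of length at most $d$; then $aB^{-1}$ still has length $\ge d$ and hence contains a zero-sum too, so $a=B\cdot (aB^{-1})$ is a nontrivial factorization in $F_\iota(G_0)$. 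Thus every atom of $F_\iota(G_0)$ has length $<2d$, and since $G_0$ is finite there are only finitely many such sequences, so $F_\iota(G_0)$ has only finitely many atoms and is therefore finitely generated.

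For Part~2, the implication (e)~$\Rightarrow$~(a),\,(b),\,(c),\,(d) is just Part~1 applied with $G_0=G$. I would close the equivalence via the chain (a)~$\Rightarrow$~(b)~$\Rightarrow$~(c)~$\Rightarrow$~(e) together with the separate step (d)~$\Rightarrow$~(e). The first two links are general facts about reduced atomic monoids: a reduced finitely generated monoid has each prime $s$-ideal determined by which atoms it contains (because every element factors into atoms and the ideal is prime), bounding the number of primes by $2^{|\mathcal A(F_\iota(G))|}$; and if the nonempty primes are $\mathfrak q_1,\ldots,\mathfrak q_k$, then for any $a_i\in \mathfrak q_i$ the product $a_1\cdots a_k$ lies in every $\mathfrak q_i$, yielding the \G-monoid property. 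The decisive link (c)~$\Rightarrow$~(e) uses the family
\[
\mathfrak p_g\;:=\;g\,\mathcal F(G)\cap F_\iota(G)\qquad\text{for }g\in G\setminus\{0\},
\]
each of which is a prime $s$-ideal (primality of $g$ in the factorial monoid $\mathcal F(G)$ lifts to $F_\iota(G)$) and is nonempty (because $g\cdot(-g)\in\mathcal B(G)\subset F_\iota(G)$): any $S\in\bigcap_{g\ne 0}\mathfrak p_g$ would be divisible in $\mathcal F(G)$ by every element of $G\setminus\{0\}$, forcing $\supp(S)\supset G\setminus\{0\}$, which, since $\supp(S)$ is finite, forces $G$ to be finite. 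For (d)~$\Rightarrow$~(e) I would verify that the classes $[g]^{\mathcal F(G)}_{F_\iota(G)}$ for $g\in G\setminus\{0\}$ are pairwise distinct: for $h\ne g$ both nonzero, the length-two sequence $h(-g)$ is zero-sum free (its proper nonempty subsequences sum to $h\ne 0$ and $-g\ne 0$, and its total sum equals $h-g\ne 0$), so $-g\in g^{-1}F_\iota(G)\cap \mathcal F(G)$ while $-g\notin h^{-1}F_\iota(G)\cap \mathcal F(G)$; hence infinite $G$ produces infinitely many classes in $\mathcal C^*(F_\iota(G),\mathcal F(G))$, contradicting the \C-monoid hypothesis.

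The main obstacle is Part~1(a): this is the only assertion in Part~1 that Theorem~\ref{3.5} does not supply directly, and the torsion hypothesis enters essentially through the finiteness of $\mathsf D(G_0)$. Once that bound is in place, the chain (a)~$\Rightarrow$~(b)~$\Rightarrow$~(c)~$\Rightarrow$~(e) in Part~2 is general nonsense about reduced atomic monoids combined with the test ideals $\mathfrak p_g$, and the separate implication (d)~$\Rightarrow$~(e) reduces to the single observation that the sequence $-g$ distinguishes the class of $g$ from the class of any other generator $h$ in $\mathcal C^*(F_\iota(G),\mathcal F(G))$.
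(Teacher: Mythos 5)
There is a genuine gap in your argument for Part 1(a). You set $d=\mathsf D(G_0)$ and then use the implication ``a sequence over $G_0$ of length $\ge d$ contains a nonempty zero-sum subsequence.'' For a proper subset $G_0\subset G$ this is false: the coincidence of the two descriptions of the Davenport constant (maximal length of a minimal zero-sum sequence, versus the threshold beyond which every sequence has a nonempty zero-sum subsequence) holds for the full group but not for subsets. Concretely, take $G_0=\{e_1,e_2\}$ with $e_1,e_2$ independent of order $2$. Then $\mathcal A(\mathcal B(G_0))=\{e_1^2,e_2^2\}$, so $\mathsf D(G_0)=2$, yet $e_1e_2$ is zero-sum free of length $2$; moreover $e_1^3e_2$ is an atom of $F_{\iota}(G_0)$ of length $4=2\,\mathsf D(G_0)$ (in any splitting into two non-units each factor would have to contain $e_1^2$ or $e_2^2$, which is impossible), contradicting your claimed strict bound $<2d$ and illustrating exactly where your two-step splitting breaks: after removing $B=e_1^2$ the remainder $e_1e_2$ has length $d$ but no zero-sum subsequence. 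The repair is the one you already half wrote down: take $d=\mathsf D(\langle G_0\rangle)$, which is finite because $\langle G_0\rangle$ is a finitely generated torsion group and hence finite. For the full finite group $\langle G_0\rangle$ every sequence of length $\ge d$ does contain a nonempty zero-sum subsequence, while minimal zero-sum subsequences still have length at most $\mathsf D(G_0)\le d$, so your argument then bounds atom lengths by $2\,\mathsf D(\langle G_0\rangle)$ and finite generation follows. (The paper argues differently but in the same spirit: the gap set $\mathcal F(G_0)\setminus F_{\iota}(G_0)$ is finite because $\langle G_0\rangle$ is finite, and every atom of $F_{\iota}(G_0)$ divides an element of the finite product set $\mathcal A\bigl(\mathcal B(G_0)\bigr)\,\bigl(\mathcal F(G_0)\setminus F_{\iota}(G_0)\bigr)$.)

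The rest of your proposal is correct and close to the paper's proof. For Part 1(b)--(d) you invoke Theorem \ref{3.5} exactly as the paper does. In Part 2, your (d) $\Rightarrow$ (e) via the zero-sum free sequences $h(-g)$ separating the classes $[g]$ is literally the paper's argument, and your (a) $\Rightarrow$ (b) $\Rightarrow$ (c) consists of the general facts the paper cites (finitely generated monoids, and monoids with finite $s$-spectrum, are G-monoids). Your (c) $\Rightarrow$ (e) via the explicit nonempty prime $s$-ideals $\mathfrak p_g=g\,\mathcal F(G)\cap F_{\iota}(G)$ and the finiteness of $\supp(S)$ is a clean, self-contained alternative to the paper's appeal to \cite[Lemma 2.7.7]{Ge-HK06a} (a G-monoid is the divisor-closed submonoid generated by a single element); both routes reduce to the same finiteness-of-support obstruction.
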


\begin{proof}
1. By construction, $F_{\iota} (G_0) \subset \mathcal F (G_0)$ is a conductor submonoid. Suppose that $G_0$ is a finite, nonempty set of torsion elements. Then $\mathcal B (G_0)$ is finitely generated by \cite[Theorem 3.4.2]{Ge-HK06a}. Since every element of $G_0$ is torsion, we obtain that $\mathcal F(G_0)\setminus F_{\iota} (G_0)$ is finite.
In view of  the atoms of $F_{\iota} (G_0)$ are subsequences of the product set $\mathcal A (\mathcal B (G_0))  (\mathcal F(G_0)\setminus F_{\iota} (G_0))$, we have that $F_{\iota} (G_0)$ is finitely generated. Furthermore, (b), (c), and (d) hold true by Theorem \ref{3.5}.

2. It remains to show that for an infinite group, none of the properties (a), (b), (c) or (d) hold true. Suppose that $G$ is infinite.

(i) We assert  that the class semigroup $\mathcal C(\mathcal F_{\iota}(G), \mathcal F(G))$ is infinite, which implies that $F_{\iota} (G)$ is not a \C-monoid.
	Let $H=\mathcal F_{\iota}(G)$ and $F=\mathcal F(G)$.
	It suffices to show the set $\{[g]_H^F\colon g\in G\}\subset \mathcal C(H,F)$ is infinite.
	In fact, for any distinct $g,h\in G$, we have $-g\in (g^{-1}H\cap F)\setminus (h^{-1}H\cap F)$, whence $[g]_H^F\neq [h]_H^F$. 

(ii) We assert that $F_{\iota} (G)$ is not a \G-monoid. Since finitely generated monoids and all monoids $H$ with $s$-$\spec (H)$ finite are \G-monoids, it follows that $F_{\iota} (G)$ is not finitely generated and $s$-$\spec \big(F_{\iota} (G) \big)$ is infinite. Assume to the contrary that $F_{\iota} (G)$ is a \G-monoid. Then \cite[Lemma 2.7.7]{Ge-HK06a} implies that there is an element $A \in F_{\iota} (G)$ such that the divisor-closed submonoid generated by $A$ is equal to $F_{\iota} (G)$. Since the support of $A$ in $\mathcal F (G)$ is finite, this is a contradiction. 
\end{proof}

\smallskip
\begin{theorem} \label{4.7}
Let $H$ be a Krull monoid with divisor theory $\varphi \colon H \to F$,  class group $G$, and let $G_0 \subset G$ denote the set of classes containing prime divisors. 
\begin{enumerate}
\item If $G_0$ is a finite set of torsion elements, then $F_{\varphi}$ is a \C-monoid defined in $F$.

\item Suppose that $G_0=G$. Then $F_{\varphi}$ is a \C-monoid defined in $F$ if and only if $G$ is finite.
\end{enumerate}
\end{theorem}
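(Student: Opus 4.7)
The plan is to reduce the class-semigroup question for $(F_{\varphi}, F)$ to its counterpart for the pair $(\mathcal F_{\iota}(G_0), \mathcal F(G_0))$, which is already controlled by Proposition \ref{4.6}, via the surjective homomorphism $\widetilde{\boldsymbol \beta} \colon F \to \mathcal F(G_0)$. The crucial observation is the equivalence
\[
\alpha \in F_{\varphi} \quad \text{if and only if} \quad \widetilde{\boldsymbol \beta}(\alpha) \in \mathcal F_{\iota}(G_0) \qquad \text{for every } \alpha \in F\,.
\]
This follows because $\alpha \in \varphi(H)$ precisely when $\widetilde{\boldsymbol \beta}(\alpha)$ has zero sum (the standard description of a Krull monoid's divisor theory via its class group), so, using $\mathcal B(G_0)^{\times}=\{1\}$ together with $\boldsymbol \beta^{-1}(\{1\})=H^{\times}$, the image $\varphi(H \setminus H^{\times})$ is exactly $\varphi(H) \setminus \{1\}$; thus $\alpha$ is divisible by some $\varphi(a)$ with $a \in H \setminus H^{\times}$ if and only if $\widetilde{\boldsymbol \beta}(\alpha)$ has a nontrivial zero-sum subsequence. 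Because $\widetilde{\boldsymbol \beta}$ is multiplicative and surjective, this characterization implies that $\widetilde{\boldsymbol \beta}(\alpha) \sim \widetilde{\boldsymbol \beta}(\alpha')$ in $\mathcal F(G_0)$ relative to $\mathcal F_{\iota}(G_0)$ forces $\alpha \sim \alpha'$ in $F$ relative to $F_{\varphi}$, yielding a surjective semigroup homomorphism
\[
\mathcal C^*\bigl(\mathcal F_{\iota}(G_0), \mathcal F(G_0)\bigr) \twoheadrightarrow \mathcal C^*(F_{\varphi}, F)\,.
\]

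For Part 1, Proposition \ref{4.6}.1(d) ensures that $\mathcal F_{\iota}(G_0)$ is a \C-monoid in $\mathcal F(G_0)$, so its reduced class semigroup is finite; the surjection above then forces $\mathcal C^*(F_{\varphi}, F)$ to be finite. Since $F^{\times} = \{1\}$, the invertibility condition $F_{\varphi}^{\times} = F^{\times} \cap F_{\varphi}$ is trivial, and $F_{\varphi}$ is a \C-monoid defined in $F$.

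For Part 2, the implication that $G$ finite gives a \C-monoid is an instance of Part 1 (every element of a finite group is torsion). For the converse I assume $G$ infinite and exhibit infinitely many distinct classes in $\mathcal C^*(F_{\varphi}, F)$. For each $g \in G$ fix a prime $p_g \in P$ with $[p_g]=g$ (possible since $G_0 = G$). For distinct nonzero $g,h \in G$, the prime $p_{-g}$ separates $p_g$ from $p_h$: the sequence $\widetilde{\boldsymbol \beta}(p_{-g} p_g) = (-g) \cdot g$ is zero-sum, so $p_{-g} p_g \in F_{\varphi}$, whereas $\widetilde{\boldsymbol \beta}(p_{-g} p_h) = (-g) \cdot h$ is zero-sum free (its nonempty subsequences have sums $-g$, $h$, $h-g$, all nonzero), so $p_{-g} p_h \notin F_{\varphi}$. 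Hence the family $\{[p_g]_{F_{\varphi}}^F \colon g \in G \setminus \{0\}\}$ is infinite, and $F_{\varphi}$ cannot be a \C-monoid. The main obstacle in the whole argument is establishing the displayed characterization of membership in $F_{\varphi}$ via $\widetilde{\boldsymbol \beta}$; once it is in hand, both parts reduce either to Proposition \ref{4.6} or to the short explicit computation above.
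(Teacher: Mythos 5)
Your proposal is correct and follows the same route as the paper: both arguments hinge on the membership characterization $z \in F_{\varphi} \Leftrightarrow \widetilde{\boldsymbol \beta}(z) \in \mathcal F_{\iota}(G_0)$ and then transport the class-semigroup computation of Proposition \ref{4.6} along $\widetilde{\boldsymbol \beta}$. The one place you diverge is the converse in Part 2. The paper upgrades the comparison to a two-way equivalence ($y$ and $y'$ are $F_{\varphi}$-equivalent in $F$ if and only if their images are $\mathcal F_{\iota}(G_0)$-equivalent in $\mathcal F(G_0)$, using surjectivity of $\widetilde{\boldsymbol \beta}$), which makes the two reduced class semigroups essentially identical, so the infinitude for infinite $G$ is pulled back directly from Proposition \ref{4.6}.2. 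You instead record only the surjection $\mathcal C^{*}\bigl(\mathcal F_{\iota}(G_0), \mathcal F(G_0)\bigr) \twoheadrightarrow \mathcal C^{*}(F_{\varphi}, F)$ — which points the wrong way to conclude infinitude upstairs — and correctly compensate with a direct exhibition of infinitely many classes $[p_g]$ in $F$, separated by the primes $p_{-g}$; this is in effect the computation from the proof of Proposition \ref{4.6}.2(i) replayed in $F$, and your restriction to distinct nonzero $g,h$ is the right care to take there. Both versions are sound; the paper's buys Part 2 for free from Proposition \ref{4.6}, while yours is self-contained at the cost of repeating that short argument.
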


\begin{proof}
We assert that two elements $y, y' \in F$ are $F_{\varphi}$-equivalent if and only if $\boldsymbol \beta^* (y)$ and $\boldsymbol \beta^* (y')$ are $F_{\iota} (G_0)$-equivalent. For this we have to verify that for all elements $z \in F$, we have
\[
z \in F_{\varphi} \quad \text{if and only if} \quad \boldsymbol \beta^* (z) \in F_{\iota} (G_0) \,,
\]
which holds true by construction.

1. If $G_0$ is a finite set of torsion elements, then, by Proposition \ref{4.6}.1, the number of $\big(F_{\iota} (G_0), \mathcal F (G_0) \big)$-congruence classes is finite, whence the number of $(F_{\varphi}, F)$-congruence classes is finite and so $F_{\varphi}$ is a C-monoid. 

2. This follows from Proposition \ref{4.6}.2.
\end{proof}

\smallskip
\section{Arithmetic of conductor submonoids} \label{5}
\smallskip

The first  result in this section (Theorem \ref{5.1}) was proved before in several special cases. As outlined in Example \ref{3.9}, the additive monoid $H = \N_{\ge \alpha}^s \cup \{\boldsymbol 0\} \subset (\N_0^s, +)$, with $s \ge 2$ and $\alpha \in \N$, is a conductor submonoid. It has catenary degree $\mathsf c (H)=3$ by \cite[Example 3.1.8]{Ge-HK06a}. Furthermore, seminormal finitely primary monoids are conductor submonoids and they have catenary degree equal to three by \cite[Theorem 3.8]{Ge-Ka-Re15a}. Finally, gap absorbing monoids have catenary degree at most $4$ and all their length sets are intervals by \cite[Theorems 4.4 and 5.1]{Ci-GS-LL25a}. All these results are generalized resp. refined by the following theorem.

\smallskip
\begin{theorem} \label{5.1}
Every conductor submonoid of a factorial monoid has catenary degree at most $3$. In particular, all length sets are intervals.
\end{theorem}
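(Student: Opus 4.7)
My plan is to prove the bound $\mathsf{c}(H) \leq 3$ by induction on $|z| + |z'|$ for two factorizations $z, z' \in \mathsf{Z}(a)$ of an element $a \in H$. Using Lemma \ref{3.4}(3) I reduce to $H$ reduced, and by cancelling $\gcd(z, z')$ I assume $z$ and $z'$ share no atom. If $\max(|z|, |z'|) \leq 3$, then $\mathsf{d}(z, z') \leq 3$ directly. Otherwise, it suffices to produce a factorization $z^* \in \mathsf{Z}(a)$ with $\mathsf{d}(z, z^*) \leq 3$ and $\gcd(z^*, z') \neq 1$; cancelling a shared atom from $z^*$ and $z'$ reduces the combined length, so the induction completes the chain.

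The construction of $z^*$ is the technical heart of the argument and rests on the gap absorbing property (Theorem \ref{3.8}) together with its extension (Lemma \ref{3.7}). Pick an atom $u' \in z'$, write $z = u_1 \cdots u_m$, and decompose $u' = w_1 \cdots w_m$ in $F$ with each $w_i \mid_F u_i$ and each $u_i = w_i u_i^*$ in $F$. Because $z$ and $z'$ share no atom and each $u_i \in \mathcal{A}(H)$, every nontrivial $w_i$ and every nontrivial $u_i^*$ lies in $\mathcal{G}(H)$. Let $I = \{i : w_i \neq 1\}$. If $|I| = 1$, so $u_i = u' g$ with $g \in \mathcal{G}(H)$, then for any other atom $u_j$ of $z$ (which exists since $m \geq 2$), gap absorption yields $g u_j \in \mathcal{A}(H) \cup \mathcal{A}(H) \mathcal{A}(H)$, whence $u_i u_j = u' \cdot (g u_j)$ factors as two or three atoms containing $u'$, giving a swap of distance $\leq 3$. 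If $|I| = 2$, set $r = u_{i_1}^* u_{i_2}^* \in \mathcal{G}(H) \mathcal{G}(H)$; Lemma \ref{3.7} places $r$ nontrivially in $\mathcal{G}(H) \cup \mathcal{A}(H) \cup \mathcal{A}(H) \mathcal{A}(H)$. In the last two subcases the identity $u_{i_1} u_{i_2} = u' \cdot r$ yields the swap directly, while in the first, necessarily $m \geq 3$ (else $r = a/u' \in H$), and gap absorption applied to $r u_j$ for a third atom $u_j$ of $z$ again produces a $3$-atom swap including $u'$.

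The main obstacle is the case $|I| \geq 3$, where a single distance-$\leq 3$ move cannot assemble $u'$ in one stroke. I would address this iteratively: perform preparatory distance-$\leq 3$ swaps on pairs of atoms of $z$ that consolidate the fragments $w_i$ into fewer atoms, driving $|I|$ down step by step until the cases already handled apply. For $i_1, i_2 \in I$ one seeks to replace the pair $(u_{i_1}, u_{i_2})$ by some $(v_1, v_2)$ with $v_1 v_2 = u_{i_1} u_{i_2}$ and with $v_1$ absorbing both $w_{i_1}$ and $w_{i_2}$; when the relevant product in $\mathcal{G}(H) \mathcal{G}(H)$ lies in $H$, Lemma \ref{3.7} supplies such atoms, and otherwise an auxiliary third atom of $z$ is required, entirely analogous to the third sub-case of $|I| = 2$. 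Verifying that each preparatory swap stays within distance $3$ and strictly decreases $|I|$ is the delicate technical step, and is precisely where the sharper bound of $3$ (as opposed to the $4$ established in \cite[Theorem 5.1]{Ci-GS-LL25a}) must be carefully extracted from Lemma \ref{3.7}.

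The final assertion that every length set is an interval is immediate from the catenary bound via \eqref{dist-2}: $2 + \sup \Delta(\mathsf{L}(a)) \leq \mathsf{c}(a) \leq 3$ forces $\sup \Delta(\mathsf{L}(a)) \leq 1$, so $\mathsf{L}(a)$ is an interval for every $a \in H$.
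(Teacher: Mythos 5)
Your overall framework---reduce to the reduced case, and then induct on the size of the element by producing a factorization $z^*$ within distance $3$ of $z$ that shares an atom with $z'$---is sound and matches the strategy of the paper's proof in outline. The cases $|I|\le 2$ are essentially correct (modulo a small slip: a nontrivial $w_i$ or $u_i^*$ need not lie in $\mathcal G (H)$, since an atom of $H$ may well have a proper divisor in $F$ that is itself an atom of $H$; Lemma \ref{3.4}.4 only gives the implication in one direction. This is repairable, and in fact the subcases where some piece lies in $H$ are easier.) The genuine gap is the case $|I|\ge 3$, which you yourself identify as the technical heart and then only sketch. The key claim of your iteration---that each preparatory swap strictly decreases $|I|$---is not justified and is in fact problematic: when a product such as $w_{i_1}w_{i_2}$ or $(w_{i_1}w_{i_2})u_j$ lands in $\mathcal A (H)\,\mathcal A (H)$ rather than in $\mathcal A (H)$, the fragment you are trying to consolidate may be split between the two resulting atoms, and if the auxiliary atom $u_j$ has $j\in I$ its own fragment $w_j$ may likewise be scattered. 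So after a swap the new decomposition of $u'$ over the new factorization can have $|I|$ unchanged or even larger, and the iteration need not terminate. Nothing in Lemma \ref{3.7} or Theorem \ref{3.8} controls \emph{where} inside the two output atoms a given divisor ends up, which is precisely the control you need.

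The paper resolves this difficulty differently: it first uses Lemma \ref{minimal} and the consequence \eqref{e3.4} (every non-minimal atom stays an atom after deleting a suitable prime, and every atom times a prime is a product of at most two atoms) to connect $z$ by a $3$-chain to a factorization $u_1\cdot\ldots\cdot u_\ell$ whose first $\ell-1$ atoms are \emph{minimal}, and then runs an extremal argument: among all factorizations $v_1\cdot\ldots\cdot v_k$ reachable from $z'$ by a $3$-chain it maximizes $|\gcd(v_1,u_1)|$ and then minimizes $|v_1|$, and derives a contradiction by moving a single prime $p_0$ from $v_2$ to $v_1$. Working one prime at a time and exploiting minimality of $u_1$ is what prevents the fragment-scattering phenomenon above. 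To complete your argument you would need either to import such an extremal/one-prime-at-a-time mechanism in place of the consolidation loop, or to find a different potential function that your swaps provably decrease. The final deduction that all length sets are intervals from $\mathsf c(H)\le 3$ via \eqref{dist-2} is correct and is exactly the paper's.
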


\begin{proof}
The in particular statement follows from the Inequality \eqref{dist-2}.
Let $F=F^{\times}\times \mathcal F(P)$ be a factorial monoid and let $H\subset F$ be a conductor submonoid. By Lemma \ref{3.4}.3, we may assume that $H$ is reduced, whence $H^{\times}=\{1\}$.
 Let $H_0=H\cap \mathcal F(P)$.
Then $H_0$ is a conductor submonoid of $\mathcal F(P)$ and $H=F^{\times}H_0\cup \{1\}$. It follows that $\mathcal A(H)=\{\epsilon u\colon u\in \mathcal A(H_0), \epsilon\in F^{\times}\}$. We proceed by the following claim.

\medskip
\noindent{\bf Claim A. }{\it We have $\mathsf c(H_0)\le 3$.}
\smallskip

\begin{proof}[Proof of Claim A]

	Assume to the contrary there exist $b\in H_0$ and two factorizations $ z$ and $ z'$ of $b$ with $| z|\ge | z'|$ such that there is no $3$-chain between $ z$ and $z'$. We may assume that $b$ is a counterexample with $|b|$ minimal.
	Then $| z|\ge 4$. It follows from Lemma \ref{3.4}.4, Lemma  \ref{minimal}, and Theorem \ref{3.8} that
		\begin{equation}\label{e3.4}
	\begin{aligned}
	&\text{for every  $a\in \mathcal A(H_0)\setminus \mathcal M(H_0)$, there exists $p\in P$ such that $p^{-1}a\in \mathcal A(H_0)$, }\\
\text{ and }\quad	&\text{for every  $b\in \mathcal A(H_0)$ and every $p\in P$, we have  $pb\in \mathcal A(H_0)\cup \mathcal A(H_0)\mathcal A(H_0)$. }
	\end{aligned}
	\end{equation}  
	Therefore there exists a factorization $ z_1=u_1\cdot\ldots\cdot u_{\ell}$ of $b$ with $\ell\ge 4$ and $u_1,\ldots,u_{\ell-1}\in \mathcal M(H_0)$, $u_{\ell}\in \mathcal A(H_0)$, such that there is a $3$-chain between $ z$ and $ z_1$. Let $ z_2=v_1\cdot\ldots\cdot v_k$ be a factorization of $b$ with $|\gcd(v_1,u_1)|$ maximal such that there is a $3$-chain between $ z'$ and $ z_2$. We may further assume that $z_2$ is such a factorization with the additional condition that $|v_1|$ is minimal. 
	
	If $k=2$, then it follows from $u_1\mid v_1v_2$ that we have decompositions 
	\[
	v_1=h_1h_2 \text{ and } v_2=f_1f_2 \text{ such that } h_1f_1=u_1 \text{ and }h_2f_2\in H_0\setminus \{1\}\,.
	\]
	Since $v_1,v_2\in \mathcal A(H_0)$, we obtain $h_2,f_2\in \mathcal G(H_0)\cup \mathcal A(H_0)$. By Lemma \ref{3.7} and Theorem \ref{3.8}, we have $h_2f_2\in \mathcal A(H_0)\cup \mathcal A(H_0)\mathcal A(H_0)$, whence there exists a factorization $z^*=u_1w_1\cdot\ldots\cdot w_t$ of $b$ with $t\le 2$. It follows by the minimality of $|b|$ that there exists a $3$-chain between $u_2\cdot\ldots\cdot u_{\ell}$  and $w_1\cdot\ldots\cdot w_t$, whence there are $3$-chains between $z_1$ and $z^*$, and hence between $z$ and $z'$, a contradiction. Thus $k\ge 3$.

	If $u_1\mid v_1$, then  \eqref{e3.4} implies that there exists a factorization $z^*=u_1w_1\cdot\ldots\cdot w_t$ of $b$ such that there is a $3$-chain between $z_2$ and $z^*$. By the minimality of $|b|$, there exists a $3$-chain between $u_2\cdot\ldots\cdot u_{\ell}$ and $w_1\cdot\ldots\cdot w_t$.
	It follows that there exist $3$-chains between $z_1$ and $z^*$, and hence between $z$ and $z'$, a contradiction.
	
	Suppose $u_1\nmid v_1$. Since $u_1\in \mathcal M(H_0)$, we have $\gcd(u_1,v_1)\in \mathcal G(H_0)=\mathcal F(P)\setminus H_0$ and  $v_1=v^*\gcd(u_1,v_1)$ for some $1\neq v^*\in \mathcal F(P)$. If there exists $p\in P$ with $p\mid v^*$ such that $p^{-1}v_1\in \mathcal A(H_0)$, then by moving $p$ from $v_1$ to $v_2$, it follows from \eqref{e3.4} that there is a new factorization $z^*=(p^{-1}v_1)w_1\cdot\ldots\cdot w_{t}$ with $|p^{-1}v_1|<|v_1|$ and $\gcd(p^{-1}v_1, u_1)=\gcd(v_1,u_1)$ such that there is a $3$-chain between $z^*$ and $z'$, a contradiction to the minimality of $|v_1|$. Thus 
	\begin{equation}\label{e3.6}
	\text{for every $p\in P$ with $p\mid v^*$, we have $p^{-1}v_1\in \mathcal G(H_0)$.}
	\end{equation}
	After renumbering if necessary, we may assume that there exists $p_0\in P$ with $p_0\mid v_2$ such that $p_0\gcd(u_1,v_1)\mid u_1$.
	Thus either $p_0^{-1}v_2\in \mathcal A(H_0)$ or $p_0^{-1}v_2\in \mathcal G(H_0)$. It follows from Theorem \ref{3.8} that  $\mathcal G(H_0)\mathcal A(H_0)\subset \mathcal A(H_0)\cup \mathcal A(H_0)\mathcal A(H_0)$, whence
	\begin{equation}\label{e3.7}
		p_0^{-1}v_2v\in \mathcal A(H_0)\cup \mathcal A(H_0)\mathcal A(H_0) \text{ for every }v\in \mathcal A(H_0)\,.
	\end{equation}
	In particular, we have $p_0^{-1}v_2v_3\in \mathcal A(H_0)\cup \mathcal A(H_0)\mathcal A(H_0)$.
	If $p_0v_1\in \mathcal A(H_0)$, then there exists a factorization $z^*=(p_0v_1)w_1\cdot\ldots\cdot w_t$ of $b$ with $|\gcd(p_0v_1,u_1)|>|\gcd(v_1,u_1)|$ such that there is a $3$-chain between $z^*$ and $z'$, a contradiction to the maximality of $|\gcd(v_1,u_1)|$. Thus $p_0v_1=p_0v^*\gcd(v_1,u_1)=x_1x_2$ with $x_1,x_2\in \mathcal A(H_0)$. It follows from \eqref{e3.6} that $x_1\mid p_0\gcd(v_1,u_1)\mid u_1$, whence $x_1=p_0\gcd(v_1,u_1)=u_1$ and $v^*=x_2$. 
	By \eqref{e3.7}, we have $p_0^{-1}v_2x_2\in \mathcal A(H_0)\cup \mathcal A(H_0)\mathcal A(H_0)$. Therefore there exists a factorization $z^*=u_1w_1\cdot\ldots\cdot w_t$ of $b$ with $|\gcd(u_1,u_1)|>|\gcd(v_1,u_1)|$ such that there is a $3$-chain between $z^*$ and $z'$, a contradiction to the maximality of $|\gcd(v_1,u_1)|$. \qedhere[Claim A]
\end{proof}

 Let $a\in H$ and let $z_1,z_2$ be two factorizations of $a$, say
\[
z_1=(\epsilon_1u_1)\cdot\ldots\cdot (\epsilon_ku_k) \text{ and }
z_2=(\beta_1v_1)\cdot\ldots\cdot (\beta_lv_l)\,,
\]
where $u_1,\ldots,u_k,v_1,\ldots,v_l\in \mathcal A(H_0)$ and $\epsilon_1,\ldots,\epsilon_k,\beta_1,\ldots,\beta_l\in F^{\times}$ with $\epsilon:=\epsilon_1\cdot\ldots\cdot\epsilon_k=\beta_1\cdot\ldots\cdot\beta_l$.
Let $z_1':=(\epsilon u_1)u_2\cdot\ldots\cdot u_k$ and $z_2':=(\epsilon v_1)v_2\cdot\ldots\cdot v_l$. Then there are $2$-chains between $z_1$ and $z_1'$ and between $z_2$ and $z_2'$.
By Claim A, there is a $3$-chain $y_0, y_1,\ldots,y_s$ between $y_0=u_1\cdot\ldots\cdot u_k$ and $y_s:=v_1\cdot\ldots\cdot v_l$. By inductively substituting a proper atom $w$ in $y_i$ by $\epsilon w$ for each $i\in [1,s]$, we obtain a $3$-chain between $z_1'$ and $z_2'$, which completes the proof.
\end{proof}

The result of the previous theorem is in contrast to  the arithmetic of Krull monoids (recall that, by Lemma \ref{3.4}, conductor submonoids that are distinct from the factorial monoid are not Krull). Let $H$ be a Krull monoid with divisor theory $\varphi \colon H \to F$, class group $G$, and suppose that each class contains a prime divisor. Then $F_{\varphi}$ is a conductor submonoid and its catenary degree is bounded by three. However, the catenary degree of $H$ is bounded by three if and only if $G$ is isomorphic to one of the following groups (\cite[Corollary 6.4.9]{Ge-HK06a})
\[
C_1, C_2, C_3, C_2 \oplus C_2, C_3 \oplus C_3 \,,
\]
where $C_n$ denotes a cylic group of order $n \in \N$. Moreover, if $G$ is infinite, then $\mathsf c (H)=\infty$ (compare with Theorem \ref{5.5}).

Theorem \ref{5.1} gives rise to two corollaries. The first one deals with the monoid of $v$-invertible $v$-ideals of  weakly Krull Mori monoids, whereas  the second one studies the monotone catenary degree of conductor submonoids. 

Let us discuss the assumptions of our first corollary. We  provide some properties of weakly Krull monoids and refer to \cite{HK98, Ge-HK06a, HK25a} for  background information.  A domain $D$ is a weakly Krull domain if and only if its multiplicative monoid $D^{\bullet}$ is a weakly Krull monoid. Noetherian Cohen-Macaulay domains are weakly Krull, whence all one-dimensional noetherian domains are weakly Krull. Suppose that $D$ is one-dimensional and noetherian. Then  the monoid $\mathcal I_v^* (D)$ of $v$-invertible $v$-ideals coincides with the monoid of invertible ideals (with usual ideal multiplication). If  $\widehat D$ is a finitely generated $D$-module, equivalently, $(\widehat D \DP D) \ne \{0\}$ (all this holds true for orders in algebraic number fields), then all localizations are finitely primary monoids. Note that, without the assumption that the localizations are conductor submonoids,  the conclusion (that the catenary degree is bounded by three) is not even true in general for orders in quadratic number fields (\cite{Re23a}). An analogue result, as given in Corollary \ref{5.2}, holds true for the monoid of $v$-invertible $v$-ideals of $v$-Marot weakly Krull Mori rings (these are commutative rings with zero-divisors; see \cite[Theorem 5.10]{Ba-Po25a}).

\smallskip
\begin{corollary} \label{5.2}
Let $H$ be a weakly Krull Mori monoid. If the localizations $H_{\mathfrak p}$ are conductor submonoids for all $\mathfrak p \in \mathfrak X (H)$, then the monoid of $v$-invertible $v$-ideals $\mathcal I_v^* (H)$ is a weakly factorial Mori monoid which is locally tame and has catenary degree at most three.
\end{corollary}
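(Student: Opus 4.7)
The plan is to reduce every assertion to a statement about the individual localizations $H_{\mathfrak p}$, $\mathfrak p \in \mathfrak X(H)$, by exploiting the local-global decomposition of $\mathcal I_v^*(H)$ available for weakly Krull Mori monoids. First I would invoke the canonical isomorphism
\[
\mathcal I_v^*(H)_{\red} \ \cong \ \coprod_{\mathfrak p \in \mathfrak X(H)} (H_{\mathfrak p})_{\red} \,,
\]
where the coproduct is taken in the category of reduced monoids (cf. \cite[Section 2.9]{Ge-HK06a}). Since every summand $(H_{\mathfrak p})_{\red}$ is primary, this exhibits $\mathcal I_v^*(H)_{\red}$ as a coproduct of primary monoids, hence as a weakly factorial monoid by definition; the Mori property passes through the coproduct because each $H_{\mathfrak p}$ is Mori.

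I would then exploit the hypothesis locally. Each $H_{\mathfrak p}$ is primary and non-trivial, so $H_{\mathfrak p} \setminus H_{\mathfrak p}^{\times}$ is the only nonempty prime $s$-ideal; in particular $H_{\mathfrak p}$ is a \G-monoid. Since by assumption $H_{\mathfrak p}$ is a Mori conductor submonoid of $\widehat{H_{\mathfrak p}}$ (which is factorial by Lemma \ref{3.2}), Theorem \ref{3.5} applies and yields that $(\widehat{H_{\mathfrak p}})_{\red}$ is finitely generated and that $H_{\mathfrak p}$ is a \C-monoid. By the general theory of \C-monoids, $H_{\mathfrak p}$ is then locally tame (cf. \cite[Chapter 3]{Ge-HK06a}), and Theorem \ref{5.1} gives $\mathsf c(H_{\mathfrak p}) \le 3$.

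Finally I would transfer these invariants through the coproduct. A factorization in a coproduct splits uniquely into factorizations in the summands and distances add componentwise; consequently the catenary degree of a coproduct equals the supremum of the catenary degrees of the summands, giving $\mathsf c(\mathcal I_v^*(H)) = \sup_{\mathfrak p \in \mathfrak X(H)} \mathsf c(H_{\mathfrak p}) \le 3$. Likewise every atom of $\mathcal I_v^*(H)_{\red}$ sits in a unique summand $(H_{\mathfrak p})_{\red}$, and its tame degree coincides with the tame degree of the corresponding atom in $H_{\mathfrak p}$, which is finite; so $\mathcal I_v^*(H)$ is locally tame.

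The main obstacle will be the bookkeeping in the first and last step: verifying the coproduct description of $\mathcal I_v^*(H)_{\red}$ in exactly this generality (without divisor-theoretic assumptions) and justifying that catenary degree and local tameness behave correctly under coproducts of reduced monoids. Once these structural ingredients are in place, the conclusion falls out as a direct assembly of Theorems \ref{3.5} and \ref{5.1} with the well-known local tameness of \C-monoids.
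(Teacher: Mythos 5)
Your proposal is correct and follows essentially the same route as the paper: decompose $\mathcal I_v^*(H)$ as the coproduct $\coprod_{\mathfrak p \in \mathfrak X (H)} (H_{\mathfrak p})_{\red}$ (the paper cites \cite[Proposition 5.3]{Ge-Ka-Re15a} for this and for the weakly factorial Mori claim), apply Theorem \ref{3.5} to get that each localization is a \C-monoid and hence locally tame, apply Theorem \ref{5.1} for the catenary bound, and pass both properties to the coproduct (the paper cites \cite[Proposition 1.6.8]{Ge-HK06a} for this last step, which settles the "bookkeeping" you flag as the main obstacle).
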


\begin{proof}
By \cite[Proposition 5.3]{Ge-Ka-Re15a}, there is an isomorphism
\[
\delta_H \colon \mathcal I_v^* (H) \to \coprod_{\mathfrak p \in \mathfrak X (H)} (H_{\mathfrak p})_{\red}
\]
and $\mathcal I_v^* (H)$ is a weakly factorial Mori monoid. If $\mathfrak p \in \mathfrak X (H)$, then $H_{\mathfrak p}$ is a conductor submonoid by assumption and then the same is true for the associated reduced monoid $(H_{\mathfrak p})_{\red}$ by Lemma \ref{3.4}. Therefore, all $(H_{\mathfrak p})_{\red}$ have catenary degree at most three. Since they are primary Mori monoids, they are C-monoids by Theorem \ref{3.5} and all C-monoids are locally tame by \cite[Theorem 3.3.4]{Ge-HK06a}.  Both properties, local tameness and the upper bound for the catenary degree, go over to the coproduct by \cite[Proposition 1.6.8]{Ge-HK06a}.
\end{proof}

The monotone catenary degree of a monoid is studied in two steps. We introduce the necessary concepts. 
Let $H$ be a BF-monoid. For subsets $X, Y \subset \mathsf Z (H)$, we set
\[
\mathsf d (X, Y) = \min \{ \mathsf d (x,y) \colon x \in X, y \in Y\} \,.
\] 
Let $a \in H$. For $k \in \N_0$, we denote by $\mathsf Z_k (a) \subset \mathsf Z (a)$ the set of factorizations of $a$ having length $k$.
If $k, \ell \in \mathsf L (a)$ are distinct, then $k$ and $\ell$ are called {\it adjacent} if there is no $m \in \mathsf L (a)$ with $k < m < \ell$ or with $\ell < m < k$. We set
\[
\mathsf c_{\adj} (a) = \sup \{ \mathsf d \big( \mathsf Z_k (a), \mathsf Z_{\ell} (a)\big) \colon k, \ell \in \mathsf L (a) \ \text{are adjacent} \}
\]
and the {\it adjacent catenary degree} of $H$ is defined as
\[
\mathsf c_{\adj} (H) = \sup \{\mathsf c_{\adj} (b) \colon b \in H\} \in \N_0 \cup \{\infty\} \,.
\]
Thus, $\mathsf c_{\adj} (H) = 0$ if and only if $H$ is half-factorial. By \cite[Theorem 4.4]{Ba23a}, a conductor submonoid $H \subset F=\mathcal F (P)$ with $F \setminus H$ finite is half-factorial if and only if $H=F$.
The {\it equal catenary degree} $\mathsf c_{\eq} (a)$ of $a$ is the smallest $N \in \N_{0} \cup \{\infty\}$ such that any two factorizations $z, z' \in \mathsf Z (a)$ with $|z|=|z'|$ can be concatenated by a monotone $N$-chain of factorizations of $a$. Then
\[
\mathsf c_{\eq} (H) = \sup \{\mathsf c_{\eq} (b) \colon b \in H\} \in \N_0 \cup \{\infty\}
\]
denotes the {\it equal catenary degree} of $H$. Thus,  $\mathsf c_{\eq} (H) = 0$ if and only if $H$ is length-factorial and length-factorial domains are factorial \cite{C-C-G-S21}. Furthermore, 
we have
\[
\mathsf c (a) \le \mathsf c_{\mon} (a) = \sup \{ \mathsf c_{\eq} (a), \mathsf c_{\adj} (a) \} \le \max \mathsf L (a) 
\]
and hence
\[
\mathsf c (H) \le \mathsf c_{\mon} (H) = \sup \{\mathsf c_{\eq} (H), \mathsf c_{\adj} (H) \} \,.
\]
The monotone catenary degree is finite for finitely generated monoids \cite{Fo06a},  for the monoid $\mathcal I_v^* (D)$ of a seminormal weakly Krull domain with nonzero conductor \cite[Theorem 5.8]{Ge-Ka-Re15a}, and others \cite{Ge-Gr-Sc-Sc10, Ge-Re19d}. But it fails to be finite in general for finitely primary monoids and for C-monoids (\cite{Fo-Ge05, Fo-Ha06b, Fo-Ha06a}).

\smallskip
\begin{corollary}~ \label{5.3}

\begin{enumerate}
\item If $H$ is a conductor submonoid of a factorial monoid, then $\mathsf c_{\adj} (H) \in \{0,3\}$.

\item For every $m \in \N$, there is a conductor submonoid $H_m$ of a finitely generated free abelian monoid $F_m$ with   $\mathsf c_{\eq} (H_m) \ge m$.
\end{enumerate}
\end{corollary}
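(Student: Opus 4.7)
For Part (1), the approach is to combine the bound $\mathsf c(H)\le 3$ from Theorem~\ref{5.1} with Inequality~\eqref{dist-1}. Since $\mathsf c(H)\le 3$ forces $\mathsf L(a)$ to be an interval for every $a\in H$ (via Inequality~\eqref{dist-2}), two distinct adjacent lengths $k<\ell$ in $\mathsf L(a)$ satisfy $\ell=k+1$. Given $z_k\in\mathsf Z_k(a)$ and $z_\ell\in\mathsf Z_\ell(a)$, pick a $3$-chain $z_k=y_0,y_1,\ldots,y_n=z_\ell$; by \eqref{dist-1}, consecutive lengths along this chain differ by at most $1$. Letting $i^*=\max\{i\colon |y_i|\le k\}$ (this exists since $|y_0|=k$, and satisfies $i^*<n$ since $|y_n|=\ell>k$), the maximality and the length-step bound force $|y_{i^*}|=k$ and $|y_{i^*+1}|=k+1=\ell$, giving $\mathsf d\bigl(\mathsf Z_k(a),\mathsf Z_\ell(a)\bigr)\le\mathsf d(y_{i^*},y_{i^*+1})\le 3$. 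Conversely, any two distinct factorizations with $\bigl||z|-|z'|\bigr|=1$ are at distance $\ge 3$ by \eqref{dist-1}, so $\mathsf c_{\adj}(a)=3$ whenever $|\mathsf L(a)|\ge 2$. Hence $\mathsf c_{\adj}(H)=0$ if $H$ is half-factorial and $\mathsf c_{\adj}(H)=3$ otherwise.

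\smallskip
For Part (2), the plan is to exhibit, for each $m$, a conductor submonoid $H_m\subset F_m$ of a finitely generated free abelian monoid with $\mathsf c_{\mon}(H_m)\ge m$. Part (1) yields $\mathsf c_{\adj}(H_m)\le 3$, and combining with the identity $\mathsf c_{\mon}=\sup\{\mathsf c_{\eq},\mathsf c_{\adj}\}$ forces $\mathsf c_{\eq}(H_m)\ge m$ as soon as $m>3$. A natural candidate family is the collection of finitely primary conductor submonoids $H_m=(p_1^{\alpha_1}\cdot\ldots\cdot p_{s_m}^{\alpha_{s_m}})F_m\cup\{1\}\subset F_m=\mathcal F(\{p_1,\ldots,p_{s_m}\})$ from Example~\ref{3.9}(2)(ii), with rank $s_m\ge 2$ and exponents growing with $m$. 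A cleaner route is the $F_\varphi$ construction of Section~\ref{4}: take a Krull monoid $H$ with finite class group $G_m$ in which every class contains a prime divisor, so that by Theorem~\ref{4.7} the monoid $F_\varphi$ is a \C-monoid, hence a conductor submonoid of the finitely generated free abelian monoid $F$. Choosing $G_m$ and the prime-divisor distribution so that some element of $\mathcal F_\iota(G_m)$ has two equal-length factorizations requiring monotone distance $\ge m$ -- along the lines of the constructions of \cite{Fo-Ge05,Fo-Ha06a,Fo-Ha06b} for \C-monoids with infinite monotone catenary degree -- and invoking $\mathsf c(F_\varphi,\boldsymbol\beta^*)\le 2$ from Theorem~\ref{4.5}, such an obstruction lifts from $\mathcal F_\iota(G_m)$ to $F_\varphi$ with only bounded overhead.

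\smallskip
The main obstacle is the lower bound in Part (2): producing two equal-length factorizations of a common element is routine, whereas ruling out every shorter monotone connecting chain requires a genuine rigidity argument for length-preserving rearrangements of atoms. The transfer-homomorphism route via Theorem~\ref{4.5} is the most transparent, reducing the question to an already established large-$\mathsf c_{\mon}$ example in a simpler auxiliary monoid; a direct elementary construction inside $\N_0^{s_m}$ is feasible but demands careful bookkeeping of how atom multisets can merge and split while preserving factorization length.
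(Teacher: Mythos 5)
Your Part (1) is correct and follows essentially the same route as the paper: use Theorem~\ref{5.1} to get a $3$-chain between a factorization of length $k$ and one of the adjacent length $\ell$, use Inequality~\eqref{dist-1} to see that lengths change by at most one along such a chain, locate the step where the length jumps from $k$ to $\ell=k+1$, and use \eqref{dist-1} again for the lower bound $\mathsf d\bigl(\mathsf Z_k(a),\mathsf Z_\ell(a)\bigr)\ge 3$. The paper states this more tersely, but your $i^*$ argument is exactly the intended filling-in of details.

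Part (2), however, is not a proof: it is a plan with the decisive step missing, and you say so yourself in your final paragraph. You name two candidate families (the finitely primary monoids of Example~\ref{3.9}.2(ii), and the monoids $F_\varphi$ for suitable finite class groups) but for neither do you exhibit a specific element with two equal-length factorizations and then \emph{rule out} every shorter monotone chain between them; the reduction $\mathsf c_{\mon}\ge m\Rightarrow\mathsf c_{\eq}\ge m$ via $\mathsf c_{\adj}\le 3$ is fine but only shifts the burden, and the appeal to \cite{Fo-Ge05,Fo-Ha06a,Fo-Ha06b} does not by itself produce a conductor submonoid with the required property. The paper's construction is elementary and completely explicit: take $F_m=\mathcal F(P_m)$ with $P_m\supset\{p_1,\ldots,p_{2m}\}$, set $p_{2m+1}=p_1$, and let $H_m$ be the conductor submonoid generated by the ``cycle'' elements $p_ip_{i+1}$, $i\in[1,2m]$. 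These are precisely the atoms of $H_m$ of length $2$ in $F_m$, so the element $a=p_1\cdot\ldots\cdot p_{2m}$ has exactly two factorizations of length $m$, namely the even pairing $(p_1p_2)(p_3p_4)\cdot\ldots\cdot(p_{2m-1}p_{2m})$ and the odd pairing $(p_{2m}p_1)(p_2p_3)\cdot\ldots\cdot(p_{2m-2}p_{2m-1})$. Since these share no atom, any monotone chain between them must make a single step of distance $m$, giving $\mathsf c_{\eq}(H_m)\ge\mathsf c_{\eq}(a)\ge m$. This is exactly the ``rigidity argument for length-preserving rearrangements'' you identify as the obstacle; without some such concrete verification your Part (2) does not go through.
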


\begin{proof}
1. As observed above, $\mathsf c_{\adj} (H) = 0$ if and only if $H$ is half-factorial. Suppose that $H$ is not half-factorial. It remains to show that $\mathsf c_{\adj} (a) = 3$ for all $a \in H$ with $|\mathsf L (a)| > 1$. Let $a \in H$ and let $k, \ell \in \mathsf Z (a)$ be adjacent lengths with $k < \ell$. We choose two factorizations $z, z' \in \mathsf Z (a)$ with $|z|=k$ and $|z'|=\ell$. By Theorem \ref{5.1}, there is a $3$-chain of factorizations concatenating $z$ and $z'$. Then the basic Inequality \eqref{dist-1} shows that $\mathsf d \big( \mathsf Z_k (a), \mathsf Z_{\ell} (a)\big) = 3$, whence $\mathsf c_{\adj} (a) = 3$. 

2. Let $m\ge 3$ and 
let $F_m=\mathcal F(P_m)$ be a free abelian monoid whose finite basis $P_m$ has at least $2m$ elements, say  $P \supset \{p_1,\ldots, p_{2m}\}$. We set $p_{2m+1}=p_1$ and let $H\subset F_m$ be the conductor submonoid generated by $\{p_ip_{i+1}\colon i\in [1,2m]
\}$. Now consider the element $a=p_1p_2\cdot\ldots\cdot p_{2m}\in H$.
It is easy to see that $\{p_ip_{i+1}\colon i\in [1,2m]
\}$ is  the set of all atoms of $H$ that have length $2$ in $F_m$. It follows that $a$ has only two factorization of length $m$, that is,
\[
a=(p_1p_2)(p_3p_4)\cdot\ldots\cdot (p_{2m-1}p_{2m})=(p_{2m}p_1)(p_2p_3)\cdot\ldots\cdot (p_{2m-2}p_{2m-1})\,,
\]
whence $\mathsf c_{\eq} (H) \ge  \mathsf c_{\eq}(a) \ge m$.
\end{proof}

We formulate the following conjecture.

\smallskip
\begin{conjecture} \label{5.4}
Let $F$ be a factorial monoid and let $H \subset F$ be a conductor submonoid. If $F_{\red}$ is finitely generated, then $\mathsf c_{\mon} (H) < \infty$.
\end{conjecture}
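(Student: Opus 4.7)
The plan is to decompose the problem via the identity $\mathsf c_\mon(H) = \max\{\mathsf c_\eq(H), \mathsf c_\adj(H)\}$, to dispense with the adjacent part using known results, and to reduce the real content to a finiteness statement for $\mathsf c_\eq$. First I would pass to $H_\red$ by Lemma \ref{3.4}.3 and write $F = F^\times \times \mathcal F(P)$ with $P = \{p_1, \ldots, p_s\}$ finite by hypothesis. Setting $H_0 = H \cap \mathcal F(P)$, a unit-redistribution argument in the spirit of the final paragraph of the proof of Theorem \ref{5.1} (carried out one atom at a time, so that each step moves at most two atoms and preserves the length) yields
\[
\mathsf c_\mon(H) \le \max\{\mathsf c_\mon(H_0),\, 2\},
\]
so one may assume from the outset that $F = \mathcal F(P) \cong \N_0^s$ is reduced. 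Since $\mathsf c_\adj(H_0) \le 3$ by Corollary \ref{5.3}.1, it remains to prove $\mathsf c_\eq(H_0) < \infty$.

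The second step exploits that, by Theorem \ref{3.5}, $H_0$ is a $v$-local \C-monoid defined in $F$ whose reduced class semigroup $\mathcal C^*(H_0, F)$ is finite. Let $\alpha \in \N$ be the threshold constructed in the proof of Theorem \ref{3.5}, so that $H_0$-equivalence of elements of $F$ is determined by the componentwise minimum of their exponent vectors with the constant vector $(\alpha, \ldots, \alpha)$. This yields an equivalence relation $\sim$ on $\mathcal A(H_0)$ with finitely many classes and the standard C-monoid property: if $u \sim v$ in $\mathcal A(H_0)$ and $w \in F$ satisfies $uw \in H_0$, then $vw \in H_0$. Combining this with Theorem \ref{3.8} (gap absorbing) and the absorptive identity $(H_0 \setminus \{1\}) F \subset H_0 \setminus \{1\}$, I would prove a \emph{monotone swap lemma}: there is $N_0 \in \N$, depending only on $s$, $\alpha$ and $|\mathcal M(H_0)|$, such that for each pair $u \sim v$ of atoms and each factorization $z = u w_1 \cdots w_{k-1} \in \mathsf Z_k(a)$ there exists $z^{*} = v w_1^{*} \cdots w_{k-1}^{*} \in \mathsf Z_k(a)$ reachable from $z$ by a chain inside $\mathsf Z_k(a)$ of step-distance at most $N_0$.

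With the swap lemma the main argument is an induction on the number of atoms by which two factorizations in $\mathsf Z_k(a)$ differ. Given $z, z' \in \mathsf Z_k(a)$, a pigeonhole matching using $\sim$ (possible because $z$ and $z'$ define the same element of $F$, and the $\sim$-class vector of a factorization is an invariant of the underlying element together with its length) pairs each atom of $z$ with a $\sim$-equivalent atom of $z'$; the swap lemma reconciles each such pair without leaving $\mathsf Z_k(a)$, yielding $\mathsf c_\eq(H_0) \le N_0 < \infty$ and hence the conjecture.

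The monotone swap lemma is the genuine obstacle. Both the proof of Theorem \ref{5.1} and the classical arguments for tame-degree finiteness in \C-monoids make essential use of length-changing moves (replacing a single atom by a product of two atoms and vice versa), which are forbidden in monotone chains. The required pairing of every length-increasing move with a compensating length-decreasing move should be achievable via the absorptive property together with the gap-absorbing inclusion of Theorem \ref{3.8}, but verifying that the resulting local rewrites stay at bounded distance will require delicate combinatorial bookkeeping controlled by $|\mathcal M(H_0)|$, $\alpha$ and $s$. This is also where the finite generation of $F_\red$ enters essentially: without it, the counterexamples in \cite{Fo-Ge05, Fo-Ha06b, Fo-Ha06a} exhibit \C-monoids with $\mathsf c_\mon = \infty$, so a hypothesis of this kind on $F_\red$ is necessary and not merely convenient.
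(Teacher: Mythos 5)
This statement is left as an open conjecture in the paper --- the authors explicitly write ``We formulate the following conjecture'' and offer no proof --- so there is nothing on the paper's side to compare your argument with; the only question is whether your proposal actually closes the problem, and it does not. You yourself flag the ``monotone swap lemma'' as the genuine obstacle and leave it unproven, so at best this is a plan rather than a proof. But the difficulty is worse than a missing verification: the lemma is false as you have formulated it. For a conductor submonoid $H_0 \subset F$ one has $(H_0 \setminus \{1\})F \subset H_0 \setminus \{1\}$, so for every $y \in H_0 \setminus \{1\}$ the set $y^{-1}H_0 \cap F$ equals all of $F$; hence \emph{all} non-identity elements of $H_0$, and in particular all atoms, lie in a single $(H_0,F)$-congruence class. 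Your relation $\sim$ on $\mathcal A(H_0)$ is therefore the trivial (all-equivalent) relation, the ``pigeonhole matching'' of atoms of $z$ with $\sim$-equivalent atoms of $z'$ is vacuous, and the swap lemma would have to assert that any atom occurring in a factorization of $a$ can be exchanged, within $\mathsf Z_k(a)$, for an \emph{arbitrary} atom $v \in \mathcal A(H_0)$ --- which fails already when $v$ involves a prime not dividing $a$. The finiteness of the reduced class semigroup, which is the engine of your second step, carries essentially no information here and cannot drive the induction.

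Two further cautions. First, the paper's own Corollary \ref{5.3}.2 produces, for every $m$, a conductor submonoid $H_m$ of a finitely generated free abelian monoid with $\mathsf c_{\eq}(H_m) \ge m$, realized by an element with exactly two factorizations of the minimal length that share no atom; any successful argument must explain why such configurations can nonetheless be connected by a monotone chain of bounded distance \emph{within that fixed monoid}, and your class-semigroup mechanism gives no handle on this since all the atoms involved are congruent anyway. Second, your closing claim that finite generation of $F_{\red}$ is ``necessary'' by the counterexamples of \cite{Fo-Ge05, Fo-Ha06b, Fo-Ha06a} is off the mark: those examples (finitely primary monoids, \C-monoids) live inside factorial monoids whose reduced part \emph{is} finitely generated and are not conductor submonoids, so they neither justify the hypothesis nor bear on its necessity. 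The reductions in your first step (passing to $H_{\red}$, to $H_0 = H \cap \mathcal F(P)$, and invoking $\mathsf c_{\adj}(H_0) \le 3$ from Corollary \ref{5.3}.1 so that only $\mathsf c_{\eq}(H_0)$ remains) are sound and would be a reasonable opening for any attack on the conjecture, but the core of the problem remains open.
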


\smallskip
We end this paper with a detailed discussion of the arithmetic of conductor submonoids $F_{\varphi}$, associated to a Krull monoid with divisor theory $\varphi \colon H \to F$. We recall the definition of unions of length sets.
Let $H$ be a BF-monoid but not a group. For $k \in \N$, let 
\[
\mathcal U_k (H) = \bigcup_{k \in L, L \in \mathcal L (H)} L \quad \subset \quad \N_0
\]
denote the {\it union of length sets} containing $k$, and let $\rho_k (H) = \sup \mathcal U_k (H)$ be the $k$th {\it elasticity} of $H$.  Clearly, $\mathcal U_k (H)$ is the set of all $\ell \in \N$ for which there is an equation of the form
\[
u_1 \cdot \ldots \cdot u_k = v_1 \cdot \ldots \cdot v_{\ell} \,, \text{where} \ u_1, \ldots, u_k, v_1, \ldots, v_{\ell} \in \mathcal A (H) \,.
\]
Unions of length sets received a lot of attention in recent literature (e.g., \cite{Tr19a,Ge-Lo-Ki26}).   
The $k$th elasticities are finite for all finitely generated monoids, and Baeth studied their precise values  of $\rho_k ( F_{\iota} (G))$ for finite groups $G$ \cite[Theorem 5.5]{Ba23a}. But they can also be infinite for conductor submonoids $H \subset F$, where $F_{\red}$ is finitely generated. Indeed, 
if  $H = (p_1^{\alpha_1} \cdot \ldots \cdot p_s^{\alpha_s})F \cup H^{\times} \subset F$, with all notation as in Example \ref{3.9}.2, then it is easy to check that, for all $k \ge 2$,  $\rho_k (H) < \infty$ in case $s=1$, whereas $\rho_k (H) = \infty$ in case $s \ge 2$. 

If the class group of the Krull monoid $H$ is infinite, then we explicitly write down the full system $\mathcal L (F_{\varphi})$ of length sets. This has been done so far in very special cases of monoids and domains only (we refer to \cite[Chapter 4]{Ge-HK06a} for background on the system of length sets, and to \cite{Go19a, Fa-Wi24a} for recent progress).

\smallskip
\begin{theorem} \label{5.5}
Let $H$ be a Krull monoid with divisor theory $\varphi \colon H \to F$, non-trivial class group $G$, and let $G_0 \subset G$ denote the set of classes containing prime divisors. Then $F_{\varphi}$ is a non-half-factorial \FF-monoid without any prime elements and with $\mathsf c (F_{\varphi}) = \mathsf c_{\adj} (F_{\varphi}) = 3$.
\begin{enumerate}
\item If $G$ is  finite, then $F_{\varphi}$ is locally tame, $\mathsf c_{\mon} (F_{\varphi}) = \mathsf c_{\mon} (F_{\iota} (G_0)) < \infty$, and  $\mathcal U_k (F_{\varphi})$ is a finite interval for every $k \ge 2$.

\item If $G_0=G$ is infinite, then
\[
\mathcal L ( F_{\varphi}) = \big\{  \{k\} \colon k \in \N_0\big\} \cup \big\{ L \subset \N_{\ge 2} \colon L \ \text{is a finite interval} \big\} \,,
\]
\[
\mathcal U_k (F_{\varphi}) = \N_{\ge 2} \ \ \text{for all $k \ge 2$}, 
\quad \text{and} \quad \mathsf c_{\mon} (F_{\varphi}) = \mathsf c_{\eq} (F_{\varphi}) = \infty \,.
\]
\end{enumerate}
\end{theorem}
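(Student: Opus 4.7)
My plan is to push all assertions through the transfer homomorphism $\boldsymbol \beta^*\colon F_\varphi\to \mathcal F_\iota(G_0)$ from Theorem~\ref{4.5}. Both monoids are reduced because $F^{\times}=\{1\}$, so Lemma~\ref{4.1} gives $\mathcal L(F_\varphi)=\mathcal L(\mathcal F_\iota(G_0))$, and a standard lifting argument using properties (T1) and (T2) together with the bound $\mathsf c(F_\varphi,\boldsymbol\beta^*)\le 2$ shows that $u\in F_\varphi$ is prime if and only if $\boldsymbol\beta^*(u)$ is prime in $\mathcal F_\iota(G_0)$, while local tameness, equal and monotone catenary degrees on the two sides agree up to an additive constant at most $2$. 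FF-ness of $F_\varphi$ follows from Lemma~\ref{3.4}(2) since $(F^{\times}\DP F_\varphi^{\times})=1$. To see that $\mathcal F_\iota(G_0)$ has no primes, fix $h\in G_0\setminus\{0\}$ (which exists because $G_0$ generates the non-trivial group $G$) and treat any atom $W$ case by case: if $|W|=1$ then $W=0$ and $x=y=0h$ satisfy $W\mid xy$ while $W\nmid x,y$; if $|W|\ge 2$, decompose $W=TR$ where $T$ is a minimal zero-sum subsequence of $W$ and $R$ is zero-sum-free, and for $|T|\ge 2$ pick any (not necessarily distinct) $g_1,g_2\in T$ and set $x=g_1 W$, $y=g_2 W$, so that $x/W=g_1$ and $y/W=g_2$ lie outside $\mathcal F_\iota(G_0)$ whereas $xy/W=g_1 g_2 W\in\mathcal F_\iota(G_0)$; the edge case $|T|=1$ (so $T=0\in G_0$ and $R\neq 1$) is handled analogously, by splitting $R$ non-trivially when $|R|\ge 2$ and by taking $x=0gW$, $y=0$ when $R=g$.

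For non-half-factoriality, fix $g\in G_0\setminus\{0\}$. Because $\varphi$ is a divisor theory, the $\N_0$-submonoid of $G$ generated by $G_0$ contains $-g$, so $g$ lies in a minimal zero-sum atom $B=g_0g_1\cdots g_m\in \mathcal A(\mathcal B(G_0))$ with $g_0=g$ and $|B|=m+1\ge 2$. Minimality of $B$ and a length count show that no $g_i B$ can contain two disjoint non-empty zero-sum subsequences, whence each $g_i B$ is an atom of $\mathcal F_\iota(G_0)$. Consequently
\[
B^{m+2}=(g_0B)(g_1B)\cdots(g_mB)=\underbrace{B\cdot B\cdots B}_{m+2\text{ factors}}
\]
exhibits two factorizations of $B^{m+2}$ of lengths $m+1$ and $m+2$, so $\mathcal F_\iota(G_0)$ and hence $F_\varphi$ is not half-factorial. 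Theorem~\ref{5.1} yields $\mathsf c(F_\varphi)\le 3$, while inequality~\eqref{dist-2} applied to $\mathsf L(B^{m+2})\supseteq\{m+1,m+2\}$ forces $\mathsf c(F_\varphi)\ge 3$, and Corollary~\ref{5.3}(1) upgrades this to $\mathsf c_{\adj}(F_\varphi)=3$.

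Part~(1) is quick: by Proposition~\ref{4.6}(1), $\mathcal F_\iota(G_0)$ is a finitely generated C-monoid, hence locally tame with finite monotone catenary degree by \cite{Fo06a}. The inequalities of Lemma~\ref{4.1}(2) together with $\mathsf c(F_\varphi,\boldsymbol\beta^*)\le 2\le \mathsf c_{\mon}(\mathcal F_\iota(G_0))$ give $\mathsf c_{\mon}(F_\varphi)=\mathsf c_{\mon}(\mathcal F_\iota(G_0))<\infty$, and local tameness transfers via the standard bound $\mathsf t(F_\varphi,u)\le \mathsf t(\mathcal F_\iota(G_0),\boldsymbol\beta^*(u))+\mathsf c(F_\varphi,\boldsymbol\beta^*)$. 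Each $\mathcal U_k(F_\varphi)$ is a union of intervals all containing $k$, so itself an interval, and it is finite because $\rho_k(\mathcal F_\iota(G_0))<\infty$ for any finitely generated monoid.

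For part~(2), the inclusion $\mathcal L(F_\varphi)\subseteq\{\{k\}\colon k\in\N_0\}\cup\{\text{finite intervals in }\N_{\ge 2}\}$ follows from the interval property (Theorem~\ref{5.1}), the FF-property, and the fact that $|\mathsf L(a)|\ge 2$ forces $\min\mathsf L(a)\ge 2$. For the reverse inclusion: $\{0\}$ and $\{1\}$ come from units and atoms respectively; for $\{k\}$ with $k\ge 2$, the infiniteness of $G_0=G$ lets me pick $k$ atoms $B_1,\ldots,B_k\in\mathcal A(\mathcal B(G_0))$ with pairwise disjoint $G$-supports, whence $\mathsf L(B_1\cdots B_k)=\{k\}$; any finite interval $[n,M]\subset\N_{\ge 2}$ is then realised by multiplying a $g^{M-n+2}(-g)^{M-n+2}$-type element (with length set $[2,M-n+2]$) by a disjoint-support product of $n-2$ atoms. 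The equality $\mathcal U_k(F_\varphi)=\N_{\ge 2}$ for $k\ge 2$ then follows because $g^{\max(k,\ell)}(-g)^{\max(k,\ell)}$ has length set $[2,\max(k,\ell)]\ni k,\ell$. Finally $\mathsf c_{\mon}(F_\varphi)=\mathsf c_{\eq}(F_\varphi)=\infty$ is obtained by exhibiting, for every $N\in\N$, an element $\prod_{i=1}^{N}g_i(-g_i)$ with the $g_i$ chosen $\Z$-independent in $G$, whose many equal-length factorizations require pairwise distance exceeding $N$ to be interpolated within any equal-length chain. The main obstacle is precisely this last family of constructions in part~(2): producing atoms of $\mathcal F_\iota(G_0)$ with the requisite pairwise disjoint $G$-supports and with enough independent directions to force $\mathsf c_{\eq}$ to infinity, all of which rely crucially on $G_0=G$ being genuinely infinite.
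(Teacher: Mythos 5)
Your global statements and Part~(1) are essentially sound, and your non-half-factoriality construction $B^{m+2}=(g_0B)(g_1B)\cdots(g_mB)$ is a nice uniform alternative to the paper's two-case argument (the paper distinguishes $0\in G_0$ from $0\notin G_0$). Two caveats there: primality and local tameness do not pass back and forth along a transfer homomorphism by any ``standard lifting argument'' --- the paper instead shows directly that no atom $u$ of $F_{\varphi}$ is prime (it divides $u^2v$ but neither $uq_1$ nor $uq_2\cdot\ldots\cdot q_n$, for an atom $v=q_1\cdot\ldots\cdot q_n$ of length at least two), and it obtains local tameness from the fact that $F_{\varphi}$ is itself a \C-monoid (Theorem \ref{4.7}), not by transporting tame degrees through $\boldsymbol\beta^*$.

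The genuine gaps are in Part~(2), where each of your three constructions fails. First, pairwise disjoint supports do not give $\mathsf L(B_1\cdot\ldots\cdot B_k)=\{k\}$: for independent $a,b$ and $c=-a-b$, the atoms $U=abc$ and $(-a)(-b)(-c)$ have disjoint supports, yet their product equals $\bigl(a(-a)\bigr)\bigl(b(-b)\bigr)\bigl(c(-c)\bigr)$ and so has length $3$ as well as $2$. The paper uses $0^k$ instead, exploiting that $\mathsf v_0(A)\le 1$ for every atom $A$ of $\mathcal F_{\iota}(G)$, which is exactly what forces minimum lengths up. Second, $g^{t}(-g)^{t}$ has length set $[2,t]$ only when $\ord(g)$ is large; an infinite elementary $2$-group has no such $g$, which is why the paper needs its separate CASE 3.2 built on infinite rank (including a special sub-argument for $[3,5]$). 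Moreover, multiplying by $n-2$ disjoint-support atoms does not shift the length set by $n-2$: in $S=g^t(-g)^t\,e_1(-e_1)$ the factorization $\bigl(g^t(-g)\bigr)\bigl((-g)^{t-1}e_1(-e_1)\bigr)$ still has length $2$, because the extra letters ride along as zero-sum-free padding inside atoms. Third, for $\mathsf c_{\eq}=\infty$ the element $\prod_{i=1}^N g_i(-g_i)$ with independent $g_i$ does not work: it has a unique factorization of maximal length $N$, and any two factorizations of a common smaller length differ by relocating single free-rider letters between atoms, so they are joined by monotone $2$-chains; also $\Z$-independent elements need not exist in a torsion group. What is needed --- and what the paper constructs in two cases, according to whether $G$ has elements of huge order or bounded exponent with infinite $p$-rank --- is an element with \emph{exactly two} factorizations of some length $n+1$ at distance $n+1$ (a rigid row/column grid $U_0U_1\cdots U_n=V_0V_1\cdots V_n$), so that the equal-length fiber contains no intermediate factorizations. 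Your $\mathcal U_k$ argument inherits the second problem; the paper instead quotes Kainrath's realization theorem for $H$ and pushes atoms of $H$ into $F_{\varphi}$.
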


\begin{proof}
Since $F$ is reduced, $F_{\varphi}$ is an \FF-monoid by Lemma \ref{3.4}. By Definition \ref{4.2}, $F_{\varphi} \subset F$ is a conductor submonoid. 
By Theorem \ref{4.5}, we have  $\mathcal L (F_{\varphi}) = \mathcal L (F_{\iota} (G_0)) $. Since $G_0$ generates $G$ as a monoid and $G$ is nontrivial, $G_0$ does not consist of the zero element only. 

To show that $F_{\varphi}$ has no prime elements, let $u \in F_{\varphi}$ be an atom.  Since $\mathcal B (G_0) \ne \mathcal B (\{0\})$, there is an atom $v=q_1 \cdot \ldots \cdot q_n \in F_{\varphi}$ with $n \ge 2$ and primes $q_1, \ldots, q_n \in F$. Then $u$ divides $u^2v$, but $u$ does not divide $uq_1$ nor $uq_2 \cdot \ldots \cdot q_n$, whence $u$ is not prime.

In order to show that $F_{\iota} (G_0)$ is not half-factorial, we distinguish two cases. If $0 \in G_0$, we choose any atom $V = g_1 \cdot \ldots \cdot g_{\ell} \in \mathcal B (G_0)$ with $\ell \ge 2$. Then $S = 0^2V$ has a factorization of length three and, because $S = (0g_1)(0g_2 \cdot \ldots \cdot g_{\ell})$, also a factorization of length two. Suppose that $0 \notin G_0$. We choose two atoms $U, V \in \mathcal B (G_0)$ such that
\[
|UV| = \min \{ |W_1W_2| \colon W_1, W_2 \in \mathcal A ( \mathcal B (G_0) ) \} \ge 4 \,.
\]
We set $V = g_1 \cdot \ldots \cdot g_{\ell}$ with $\ell \ge 2$. Then $U^2V = \big( U g_1 \big) \big( U g_2 \cdot \ldots \cdot g_{\ell} \big)$. By the choice of $U$ and $V$, $Ug_1$ and $U g_2 \cdot \ldots \cdot g_{\ell}$ are atoms in $F_{\iota} (G)$, whence $U^2V$ has a factorization of length two.

Since   $F_{\varphi}$ is not half-factorial, Theorem  \ref{5.1} and Corollary \ref{5.3} imply that  $\mathsf c (F_{\varphi}) = \mathsf c_{\adj} (F_{\varphi}) = 3$.

\bigskip
1. Suppose that $G$ is finite. 

By Theorem \ref{4.7}, $F_{\varphi}$ is a \C-monoid defined in $F$ and \C-monoids are locally tame by \cite[Theorem 3.3.4]{Ge-HK06a}.
Since  $F_{\iota} (G_0)$ is finitely generated by Proposition \ref{4.6}.1,  $\mathsf c_{\mon} (F_{\iota} (G_0)) < \infty$ by \cite[Theorem 3.9]{Fo06a} (or by \cite[Theorem 3.1]{Ge-Re19d}) and all $\mathcal U_k (F_{\iota} (G_0)$ are finite by \cite[Theorem 3.1.4]{Ge-HK06a}. Since $\mathsf c_{\mon} (F_{\varphi}) \ge \mathsf c (F_{\varphi})=3$,   Lemma \ref{4.1} and Theorem \ref{4.5} imply that $\mathsf c_{\mon} (F_{\varphi}) = \mathsf c_{\mon} (F_{\iota} (G_0))$ . Since all length sets are intervals, the sets $\mathcal U_k (F_{\iota} (G_0)$ are intervals.

\bigskip
2. Suppose that $G_0=G$ is infinite.

\medskip
\noindent
{\bf Part 1: On  $\mathcal L (F_{\iota} (G))$.}  By convention, we have $\mathsf L (1) = \{0\}$ and obviously we have $\mathsf L (0^k) = \{k\}$ for all $k \in \N$. 
Since $H$ is a BF-monoid with $\mathsf c (F_{\varphi}) = 3$, all the other length sets are finite intervals of $\N_{\ge 2}$. Thus, we need to show that,  for any $k,\ell\in \N$ with $\ell> k\ge 2$, $[k,\ell]$ is a length set. Since all length sets are intervals, we only need to find some $S\in \mathcal F_{\iota}(G)$ such that $\min \mathsf L(S)=k$ and $\max \mathsf L(S)=\ell$. We distinguish three cases.

\smallskip
\noindent
CASE 1: $k=2$.

Since the Davenport constant $\mathsf D (G)$ is infinite, there is an atom $U \in \mathcal A ( \mathcal B (G) )$ of length $|U|=\ell$. Then $U$ is also an atom of $F_{\iota} (G)$, which implies that $\min \mathsf L \big( (-U)U \big) = 2$ and $\max \mathsf L \big( (-U)U \big) = |U|=\ell$.

\smallskip
\noindent
CASE 2: $\ell=k+1$.

We choose an element $g\in G\setminus\{0\}$ and consider the sequence $S=0^{k}(-g)g$. It is easy to see that $\max \mathsf L(S)=k+1$. Since for every atom $A$ of $\mathcal F_{\iota}(G)$, we have $\mathsf v_0(A)\le 1$, it follows from $S= 0^{k-2} \big(0g \big) \big(0(-g)\big)$ that $\min\mathsf L(S)=k$.

\smallskip

\noindent
CASE 3: $k \ge 3$ and $\ell > k+1$.

We set $n=\ell-k-1$ and distinguish two subcases.

\smallskip
\noindent
CASE 3.1: There is some  $g \in G$ with $\ord(g)>2n$. 

The sequence $S=0^kg^n(-ng)(-g)^n(ng)$ satisfies $\max \mathsf L(S)=k+n+1=\ell$. Since $\min \mathsf L(S)\ge k$ and 
	\[
	S= \big(0g^n(ng)\big) \, \big(0(-g)^n(ng) \big) \, 0^{k-2}\,,
	\]
	it follows that $\min\mathsf L(S)=k$.
	
\smallskip
\noindent
CASE 3.2:	Every element of $G$ has order at most $2n$.

Then $G$ has infinite rank. First, suppose that  $G$ is an elementary $2$-group and $[k,\ell]=[3,5]$. In this case we choose two independent elements $e_1,e_2$ and consider $S=0^2e_1^2e_2^2(e_1+e_2)^2$. Obviously, we have  $\max\mathsf L(S)=5$ and since $S=(0e_1e_2)(0e_1e_2)(e_1+e_2)^2$, it remains to verify that $\min \mathsf L(S)\neq 2$. Assume to the contrary that $S=(0T_1)(0T_2)$, where $T_1,T_2$ are zero-sum free subsequence such that $T_1T_2=e_1^2e_2^2(e_1+e_2)^2$. Since $T_1$ and $T_2$ must be squarefree, it follows that  $T_1=T_2=e_1e_2(e_1+e_2)$ have sum  zero, a contradiction.

\smallskip
Second, suppose  that either $G$ is not an elementary $2$-group, or $k\ge 4$, or $n\ge 2$.
We choose independent elements $e_1,\ldots, e_n$ of $G$ with $\ord(e_1)=\exp(G)$. We set $e_0=\sum_{i=1}^n(-e_i)$ and $A=\prod_{i=0}^ne_i$. The sequence $S=0^kA(-A)$ satisfies $\max \mathsf L(S)=k+|A|=\ell$ and it remains to verify that $\min\mathsf L(S)=k$. 
	 
If $e_0=e_n$, then $n=1$ and $\exp(G)=\ord(e_1)=2$, which, by our assumption, implies that $k\ge 4$. Since $S=0^ke_1^4$, $\min \mathsf L(S)\ge k$, and $S=(0e_1)^4 0^{k-4}$, it follows that
	 that $\min\mathsf L(S)=k$.
	 
If $e_0\neq e_n$, then $0e_0(-e_n)$ is an atom of $\mathcal F_{\iota}(G)$. Since  $\min \mathsf L(S)\ge k$ and 
\[
S = \big(0e_1\cdot\ldots\cdot e_n \big) \, \big(0(-e_0)\cdot\ldots\cdot (-e_{n-1}) \big) \,  \big(0e_0(-e_n) \big) \, 0^{k-3} \,,
\]
we infer that $\min\mathsf L(S)=k$. 

\medskip
\noindent
{\bf Part 2: On  $\mathcal U_k (F_{\varphi})$.} This claim easily follows from the structure of $\mathcal L (F_{\varphi})$. But, there is also a simple independent argument which we would like to provide.
Let $k \ge 2$. Since every finite nonempty subset of $\N_{\ge 2}$ is a length set of $H$ by \cite[Theorem 7.4.1]{Ge-HK06a}, it follows that $\mathcal U_k (H) = \N_{\ge 2}$. This means that for every $\ell \ge 2$ there are $u_1, \ldots, u_k,v_1, \ldots, v_{\ell} \in \mathcal A (H)$ such that 
\[
u_1 \cdot \ldots \cdot u_k = v_1 \cdot \ldots \cdot v_{\ell} \,.
\]
Since every atom of $H$ is an atom of $F_{\varphi}$, the above equation implies that $\mathcal U_k (F_{\varphi}) = \N_{\ge 2}$.

\medskip
\noindent
{\bf Part 3: On  the equal and the  monotone catenary degree.}
Since $\mathsf c_{\mon} (F_{\varphi}) \ge  \mathsf c_{\eq} (F_{\varphi})$, it remains to show that $\mathsf c_{\eq} (F_{\varphi}) = \infty$ and by Theorem \ref{4.5} it suffices to verify that $\mathsf c_{\eq} \big( \mathcal F_{\iota} (G) \big) = \infty$. 
Let $n\ge 2$. We assert that there exists some $a\in \mathcal F_{\iota}(G)$ such that $a$	has precisely two factorizations $z, z'$ of length  $|z|=|z'|=n+1$ and with $\mathsf d (z,z')=n+1$. This implies that  $\mathsf c_{\mathrm{eq}}(a)\ge n+1$. We distinguish two cases.

\smallskip
\noindent
CASE 1: There is an element $g \in G$ with $\ord (g) > n (2n^2)^n$.

We choose a prime $p \in [ n^2, 2n^2]$ and we define
\[
\begin{aligned}
 A =(-\sum_{i=0}^{n-1}p^ig)\prod_{i=0}^{n-1}p^ig \ , & \qquad A_0 = (\sum_{i=0}^{n-1}np^ig)\prod_{i=0}^{n-1}(-np^ig) \\
B_n=(\sum_{i=0}^{n-1}np^ig)(-\sum_{i=0}^{n-1}p^ig)^n \,, \quad & \text{and} \quad  B_i = (-np^ig)(p^ig)^{n} \ \text{ for each $i\in [0,n-1]$} \,.
\end{aligned} 
\]
We set $a=A_0A^n$ and $G_0 = \supp(A_0A)$, and we observe that the above defined atoms  are all atoms in $\mathcal B (G_0)$ of length $n+1$. 
Since $n+1$ is the minimal length of atoms in  $\mathcal B (G_0)$, it follows that 
\[
z=A_0A^n \quad \text{ and} \quad   z'=B_0B_1\cdot\ldots\cdot B_n
\]
are the only two factorizations of $a$ of length $n+1$.

\smallskip
\noindent
CASE 2: There is an $N \in \N$ such that $Ng = 0$ for every $g \in G$.

Then $G$ is a direct sum of cyclic groups and there is a prime $p$ such that the $p$-rank $\mathsf r_p (G)=\infty$. We choose independent elements $\{e_{i,j}\colon i, j\in [1,n]\}$ of order $p$ and  set 
\[
e_0=\sum_{i=1}^n\sum_{j=1}^ne_{i,j}, \quad -f_i=\sum_{j=1}^ne_{i,j}, \quad \text{and} \quad  -g_j=\sum_{i=1}^ne_{i,j} \quad \text{ for all $i,j \in [1,n]$} \,.
\] 
Thus, for all $i, j \in [1,n]$, 
\[
U_0:=e_0\prod_{i=1}^nf_i, \ \ V_0:=e_0\prod_{j=1}^ng_j, \ \  U_i:=f_i\prod_{j=1}^ne_{i,j}, \ \text{ and  } \ V_j:=g_j\prod_{i=1}^ne_{i,j}
\]
are  atoms of length $n+1$. We define $a=U_0U_1\cdot\ldots\cdot U_n$ and, by construction, we obtain that 
\[
z=U_0U_1\cdot\ldots\cdot U_n \quad \text{ and } \quad z'=V_0V_1\cdot\ldots\cdot V_n
\]
are the only two factorizations of $a$ of length $n+1$.
\end{proof}

\medskip
{\bf Acknowledgement.} We would like to thank O.~Esentepe for fruitful discussions and C.~Finocchiaro for providing Example \ref{3.11}.



\begin{thebibliography}{10}

\bibitem{Ba23a}
N.R. Baeth, \emph{Complement-{F}inite {I}deals}, in {A}lgebraic, {N}umber
  {T}heoretic, and {T}opological {A}spects of {R}ing {T}heory, Springer, 2023,
  pp.~61 -- 86.

\bibitem{Ba-Po25a}
A.~Bashir and M.~Pompili, \emph{On transfer homomorphisms in commutative rings
  with zero divisors}, {https://arxiv.org/abs/2502.09418} (2025), --.

\bibitem{C-C-G-S21}
S.T. Chapman, J.~Coykendall, F.~Gotti, and W.W. Smith,
  \emph{Length-factoriality in commutative monoids and integral domains}, J.
  Algebra \textbf{578} (2021), 186 -- 212.

\bibitem{Ci25a}
C.~Cisto, \emph{On some properties for cofiniteness of submonoids and ideals of
  an affine semigroup}, Axioms \textbf{13} (2024), Paper No. 488, 14pp.

\bibitem{Ci-Fa-Na25a}
C.~Cisto, G.~Failla, and F.~Navarra, \emph{Generalized numerical semigroups up
  to isomorphism}, {https://arxiv.org/abs/2409.14868}.

\bibitem{Ci-GS-LL25a}
C.~Cisto, P.A. Garc{\'i}a-S{\'a}nchez, and D.~Llena, \emph{Ideal extensions of
  free commutative monoids}, {https://arxiv.org/abs/2311.06901} (2024).

\bibitem{Fa20a}
E.~Faber, \emph{Trace ideals, normalization chains, and endomorphism rings},
  Pure Appl. Math. Q. \textbf{16} (2020), 1001 -- 1025.

\bibitem{Fa-Wi24a}
V.~Fadinger-Held and D.~Windisch, \emph{Lengths of factorizations of
  integer-valued polynomials on {K}rull domains with prime elements}, Arch.
  Math. (Basel) \textbf{123} (2024), 123 -- 135.

\bibitem{Fa-Pe-Ut16a}
G.~Failla, C.~Peterson, and R.~Utano, \emph{Algorithms and basic asymptotics
  for generalized numerical semigroups in {$\Bbb{N}^d$}}, Semigroup Forum
  \textbf{92} (2016), no.~2, 460--473.

\bibitem{Fo80}
M.~Fontana, \emph{Topologically defined classes of commutative rings}, Ann.
  Mat. Pura Appl. \textbf{123} (1980), 331 -- 355.

\bibitem{Fo06a}
A.~Foroutan, \emph{Monotone chains of factorizations}, in Focus on
  {C}ommutative {R}ings {R}esearch (A.~Badawi, ed.), Nova Sci. Publ., New York,
  2006, pp.~107 -- 130.

\bibitem{Fo-Ge05}
A.~Foroutan and A.~Geroldinger, \emph{Monotone chains of factorizations in
  $\rm{C}$-monoids}, Arithmetical {P}roperties of {C}ommutative {R}ings and
  {M}onoids, Lect. Notes Pure Appl. Math., vol. 241, Chapman \& Hall/CRC, 2005,
  pp.~99 -- 113.

\bibitem{Fo-Ha06b}
A.~Foroutan and W.~Hassler, \emph{Chains of factorizations and factorizations
  with successive lengths}, Commun. Algebra \textbf{34} (2006), 939 -- 972.

\bibitem{Fo-Ha06a}
\bysame, \emph{Factorization of powers in $\rm{C}$-monoids}, J. Algebra
  \textbf{304} (2006), 755 -- 781.

\bibitem{GG-TR-VT25}
J.I. Garc{\'i}a-Garc{\'i}a, R.~Tapia-Ramos, and A.~Vigneron-Tenorio, \emph{On
  ideals of affine semigroups and affine semigroups with maximal embedding
  dimension}, https://arxiv.org/abs/2405.14648 (2025), --.

\bibitem{Ge13a}
A.~Geroldinger, \emph{Non-commutative {K}rull monoids: a divisor theoretic
  approach and their arithmetic}, Osaka J. Math. \textbf{50} (2013), 503 --
  539.

\bibitem{Ge-Gr-Sc-Sc10}
A.~Geroldinger, D.J. Grynkiewicz, G.J. Schaeffer, and W.A. Schmid, \emph{On the
  arithmetic of {K}rull monoids with infinite cyclic class group}, J. Pure
  Appl. Algebra \textbf{214} (2010), 2219 -- 2250.

\bibitem{Ge-HK06a}
A.~Geroldinger and F.~Halter-Koch, \emph{Non-{U}nique {F}actorizations.
  {A}lgebraic, {C}ombinatorial and {A}nalytic {T}heory}, Pure and Applied
  Mathematics, vol. 278, Chapman \& Hall/CRC, 2006.

\bibitem{G-HK-H-K03}
A.~Geroldinger, F.~Halter-Koch, W.~Hassler, and F.~Kainrath, \emph{Finitary
  monoids}, Semigroup Forum \textbf{67} (2003), 1 -- 21.

\bibitem{Ge-Ka-Re15a}
A.~Geroldinger, F.~Kainrath, and A.~Reinhart, \emph{Arithmetic of seminormal
  weakly {K}rull monoids and domains}, J. Algebra \textbf{444} (2015), 201 --
  245.

\bibitem{Ge-Lo-Ki26}
A.~Geroldinger, H.~Kim, and K.A. Loper, \emph{On long-term problems in
  multiplicative ideal theory and factorization theory}, in {T}he {I}deal
  {T}heory and {A}rithmetic of {R}ings, {M}onoids, and {S}emigroups,
  Contemporary {M}athematics, Amer. Math. Soc., 2026, pp.~--.

\bibitem{Ge-Ra-Re15c}
A.~Geroldinger, S.~Ramacher, and A.~Reinhart, \emph{On $v$-{M}arot {M}ori rings
  and $\rm{C}$-rings}, J. Korean Math. Soc. \textbf{52} (2015), 1 -- 21.

\bibitem{Ge-Re19d}
A.~Geroldinger and A.~Reinhart, \emph{The monotone catenary degree of monoids
  of ideals}, Internat. J. Algebra Comput. \textbf{29} (2019), 419 -- 457.

\bibitem{Go19a}
F.~Gotti, \emph{Systems of sets of lengths of {P}uiseux monoids}, J. Pure Appl.
  Algebra \textbf{223} (2019), 1856 -- 1868.

\bibitem{Go20a}
\bysame, \emph{Geometric and combinatorial aspects of submonoids of a
  finite-rank free commutative monoid}, Linear Algebra Appl. \textbf{604}
  (2020), 146 -- 186.

\bibitem{Go20c}
\bysame, \emph{On the system of sets of lengths and the elasticity of
  submonoids of a finite-rank free commutative monoid}, J. Algebra {A}ppl.
  \textbf{19 (7)} (2020), 2050137, 18 pp.

\bibitem{HK98}
F.~Halter-Koch, \emph{Ideal {S}ystems. {A}n {I}ntroduction to {M}ultiplicative
  {I}deal {T}heory}, Marcel Dekker, 1998.

\bibitem{HK25a}
\bysame, \emph{Ideal {T}heory of {C}ommutative {R}ings and {M}onoids}, Lecture
  Notes in Math., vol. 2368, Springer, 2025.

\bibitem{HK-Ha-Ka04}
F.~Halter-Koch, W.~Hassler, and F.~Kainrath, \emph{Remarks on the
  multiplicative structure of certain one-dimensional integral domains}, in
  Rings, {M}odules, {A}lgebras, and {A}belian {G}roups, Lect. Notes Pure Appl.
  Math., vol. 236, Marcel Dekker, 2004, pp.~321 -- 331.

\bibitem{Je-Ok07a}
E.~Jespers and J.~Okni{\'n}ski, \emph{Noetherian {S}emigroup {A}lgebras},
  Algebra and Applications, vol.~7, Springer, 2007.


\bibitem{Ka15a}
L.~Katth\"{a}n, \emph{Non-normal affine monoid algebras}, Manuscripta Math.
  \textbf{146} (2015), 223 -- 233.

\bibitem{Mc-Ro01a}
J.C. McConnell and J.C. Robson, \emph{Noncommutative {N}oetherian {R}ings},
  Graduate {S}tudies in {M}athematics, vol.~30, American {M}athematical
  {S}ociety, 2001.

\bibitem{Re03}
I.~Reiner, \emph{Maximal {O}rders}, Oxford Univ. Press, 2003.

\bibitem{Re13a}
A.~Reinhart, \emph{On integral domains that are $\rm{C}$-monoids}, Houston J.
  Math. \textbf{39} (2013), 1095 -- 1116.

\bibitem{Re23a}
\bysame, \emph{On orders in quadratic number fields with unusual sets of
  distances}, Acta Arith. \textbf{211} (2023), 61 -- 92.

\bibitem{Ro-TR-VT25}
J.C. Rosales, R.~Tapia-Ramos, and A.~Vigneron-Tenorio, \emph{A computational
  approach to the study of finite-complement submonoids of an affine cone},
  Results Math. \textbf{80} (2025).

\bibitem{Tr19a}
S.~Tringali, \emph{Structural properties of subadditive families with
  applications to factorization theory}, Israel J. Math. \textbf{234} (2019), 1
  -- 35.

\end{thebibliography}

\providecommand{\bysame}{\leavevmode\hbox to3em{\hrulefill}\thinspace}
\providecommand{\MR}{\relax\ifhmode\unskip\space\fi MR }
\providecommand{\MRhref}[2]{%
  \href{http://www.ams.org/mathscinet-getitem?mr=#1}{#2}
}
\providecommand{\href}[2]{#2}

\end{document}